\documentclass[12pt,reqno]{amsart}

\usepackage{XCharter}

\usepackage{amsmath, amssymb, amsthm}
\usepackage{mathtools}        
\usepackage{mathrsfs}         
\usepackage{stmaryrd}         

\usepackage{tikz}
\usetikzlibrary{cd, decorations.pathmorphing, arrows, positioning}

\usepackage[top=1in, bottom=1in, left=2.5cm, right=2.5cm]{geometry}
\usepackage{parskip}

\usepackage{microtype}

\usepackage[shortlabels]{enumitem}  

\usepackage{appendix}
\usepackage{verbatim}
\usepackage{comment}
\usepackage{graphicx}
\usepackage{subfiles}
\usepackage{chngcntr}

\usepackage[colorlinks, pagebackref]{hyperref}
\hypersetup{
  colorlinks = true,
  citecolor  = red,
  linkcolor  = blue,
  urlcolor   = cyan
}

\theoremstyle{definition}
\newtheorem{para}{}[subsection]

\theoremstyle{plain}
\newtheorem{thm}{Theorem}[section]
\newtheorem{lem}[thm]{Lemma}
\newtheorem{cor}[thm]{Corollary}
\newtheorem{prop}[thm]{Proposition}

\theoremstyle{definition}

\newtheorem{question}[thm]{Question}

\theoremstyle{remark}
\newtheorem{rem}[thm]{Remark}

\newcommand{\bbC}{\mathbb{C}}  \newcommand{\bbD}{\mathbb{D}}
  \newcommand{\bbF}{\mathbb{F}}

  \newcommand{\bbP}{\mathbb{P}}
\newcommand{\bbQ}{\mathbb{Q}}  \newcommand{\bbR}{\mathbb{R}}

  \newcommand{\bbZ}{\mathbb{Z}}

\newcommand{\frm}{\mathfrak{m}}

  \newcommand{\cH}{\mathcal{H}}
  
\newcommand{\cK}{\mathcal{K}}  
  
\newcommand{\cO}{\mathcal{O}}

  \newcommand{\rH}{\mathrm{H}}

\newcommand{\ol}[1]{\overline{#1}}

\newcommand{\Ql}{\bbQ_{\ell}}
\newcommand{\Zl}{\bbZ_{\ell}}

\DeclareMathOperator{\CH}{CH}

\DeclareMathOperator{\Hom}{Hom}

\DeclareMathOperator{\Spec}{Spec}

\DeclareMathOperator{\Tor}{Tor}

\newcommand{\Fl}{\bbF_{\ell}}                    
\newcommand{\Tl}{T_{\ell}}                       
\DeclareMathOperator{\Supp}{Supp}                
\DeclareMathOperator{\codim}{codim}              
\DeclareMathOperator{\Perv}{Perv}                
\DeclareMathOperator{\lgth}{length}              
\DeclareMathOperator{\SSupp}{SS}                 
\DeclareMathOperator{\CC}{CC}                    
\DeclareMathOperator{\FM}{FM}                    
\newcommand{\Dbc}{D^{b}_{c}}                     
\newcommand{\RHom}{R\Hom}                        
\newcommand{\Ldot}{\mathcal{L}}                  

\title{Artin vanishing along the \(\ell\)-adic tower of an abelian variety}

\author{Ashutosh Roy Choudhury}
\address{Department of Mathematics and Statistics, Boston University, Boston, MA 02215, USA}
\email{ashutosh@bu.edu}

\author{K. V. Shuddhodan}
\address{Department of Mathematics, University of Notre Dame, Notre Dame, IN 46556, USA}
\email{skadattu@nd.edu}
\date{September 28, 2026}
\subjclass[2020]{Primary 14F20; Secondary 14K05, 14C17}

\begin{document}

\begin{abstract}
Let \(A\) be an abelian variety of dimension \(g\) over an algebraically closed field and let
\(\ell\) be a prime invertible in the field. For every constructible \(\Fl\)-sheaf \(F\) on \(A\) we
prove that there is an integer \(e\), depending on \(F\), such that the pullback map
\([\ell^{e}]^{*}\colon\rH^{i}(A,[\ell^{n}]^{*}F)\to\rH^{i}(A,[\ell^{n+e}]^{*}F)\) is zero for all
\(n\geq0\) and all \(i>\dim\operatorname{Supp}F\). In particular
\(\varinjlim\limits_{n}\rH^{i}(A,[\ell^{n}]^{*}F)=0\) for \(i>\dim\operatorname{Supp}F\), which answers a
question of Bhatt--Schnell--Scholze.

Our methods also give sharp codimension estimates for the supports of the cohomology sheaves of the
Fourier--Mellin transform of a perverse sheaf. With \(\ell\)-adic coefficients, we remove the
arithmeticity hypothesis from the estimates of Esnault--Kerz. The corresponding estimates for the completed transform
with \(\Fl\)-coefficients are new and hold even when Hard Lefschetz fails.
\end{abstract}

\maketitle
\setcounter{tocdepth}{1}
\tableofcontents

\section{Introduction}
\label{sec:introduction}

Let \(k\) be an algebraically closed field, let \(\ell\) be a prime invertible in \(k\), and let \(A\) be
an abelian variety of dimension \(g\) over \(k\).

\begin{question}[Bhatt--Schnell--Scholze]
\label{question:BSS}
Given a constructible sheaf \(F\) of \(\Fl\)-vector spaces on \(A\), does the direct limit
\[
  \varinjlim_n\ \rH^{i}\bigl(A,[\ell^n]^{*}F\bigr),
\]
formed along the pullback maps of the tower of multiplication by \(\ell\), vanish for
\(i>\dim\Supp F\)?
\end{question}

Let \(A_{\infty}\) be the limit of the tower \(\cdots\to A\xrightarrow{\ell}A\xrightarrow{\ell}A\).
The question asks whether the analogue of Artin vanishing holds for \(A_{\infty}\)
\cite[Remark~2.11]{BhattSchnellScholze2018}. The idea that Artin vanishing can hold along a tower of
covers of a projective variety is due to Scholze, who proved it in characteristic zero for the tower of
``mock Frobenius'' covers of projective space\footnote{The ``mock Frobenius'' is the toric Frobenius lift
\(\Phi\colon\bbP^{N}\to\bbP^{N}\), \([x_0:\cdots:x_N]\mapsto[x_0^{\ell}:\cdots:x_N^{\ell}]\).} by perfectoid
methods \cite[Theorem~17.3]{Sch14}. The following is
known about Question~\ref{question:BSS} and other vanishing results of a similar nature.

\begin{enumerate}[(a)]

\item In characteristic zero the question has a positive answer. Over a complete algebraically closed
extension of \(\Ql\), it follows by an adaptation of Scholze's perfectoid method, through the perfectoid
cover of \cite[Theorem~1]{BGCHSWY22} and the perfectoid Artin vanishing of
\cite[Theorem~3.3 and Example~3.5]{Rei19}. Over \(\bbC\), Bhatt
\cite[Theorem~4.1]{Bha26} deduces it, for every compact complex torus, from the results of
\cite[\S2]{BhattSchnellScholze2018} on the Fourier--Mellin transform. The vanishing over either field
implies that Question~\ref{question:BSS} has a positive answer over every algebraically closed field of
characteristic zero.

\item For the closely parallel tower of mock Frobenius covers of projective space, Scholze proved the
resulting vanishing for constant \(\Fl\)-sheaves on closed subvarieties in characteristic zero by
perfectoid methods \cite[Theorem~17.3]{Sch14}. Esnault proved the statement for constructible
\(\Fl\)-sheaves over every algebraically closed field of characteristic different from \(\ell\)
\cite[Theorem~5.1]{Esn21}, while Reinecke proved the characteristic zero statement for constructible
\(\Fl\)-sheaves by perfectoid methods \cite[Theorem~3.3 and Example~3.4]{Rei19}.

Bhatt \cite{Bha26} gave an alternative proof of the mock Frobenius
case with the aim of adapting it to the abelian setting. However, his proof relies on the
ramification of \(\Phi\) and it is unclear how to adapt the strategy to the unramified map
\([\ell]\colon A\to A\) \cite[Remark~0.6]{Bha26}.

\item Mousa \cite[Theorem~9.11 and Corollary~9.12]{Mou24} extended the vanishing theorem of \cite{Esn21} to
every tower admitting a levelwise quasi-finite separated morphism to a product of curve towers with cyclic
\(\ell\)-power Galois groups, and constructed the Fourier--Mellin transform for general pro-\(\ell\) Galois
towers \cite[Chapter~10]{Mou24}. In particular, his theorem gives a
positive answer to Question~\ref{question:BSS} when \(A\) is isogenous to a product of elliptic curves
(see also Remark~\ref{rem:BBDG-comparison}).

\end{enumerate}

In positive characteristic Question~\ref{question:BSS} has remained open in general. This article proves
the vanishing in all characteristics. Our methods also give codimension estimates for the Fourier--Mellin
transform (Theorem~\ref{thm:C}) and, as a corollary, generic vanishing for every perverse sheaf on an
abelian variety (Corollary~\ref{cor:D3}).

In the remainder of this section we state the main results of this article.

\subsection{Betti numbers along the tower}
\label{sec:statements}

We begin with an estimate for the growth of the Betti numbers along the tower, which is the main step in
the answer to Question~\ref{question:BSS}.

\begin{thm}\label{thm:A}
Let \(A\) and \(\ell\) be as above and let \(P\in\Perv(A,\Fl)\).\footnote{All perverse sheaves in
this article are for the middle perversity.} For every positive integer \(m\) invertible in \(k\)
and every \(i\in\bbZ\),
\[
  \dim_{\Fl}\rH^{i}\bigl(A,[m]^{*}P\bigr)\ \leq\ C_{P}\,m^{2(g-|i|)},
\]
where \(C_{P}\) is a constant independent of \(m\).\footnote{These bounds are optimal. For
\(A=E_1\times\dots\times E_g\) a product of elliptic curves and \(P=i_{B*}\Fl[b]\) with
\(B=E_1\times\dots\times E_b\subseteq A\) and
\(0\leq b\leq g\), one has \(\dim_{\Fl}\rH^{\pm b}\bigl(A,[m]^{*}P\bigr)=m^{2(g-b)}\).}
\end{thm}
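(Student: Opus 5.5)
The strategy is to convert the cohomology of the pullback $[m]^{*}P$ into cohomology of $P$ twisted by the regular representation of $A[m]$, and then to bound how many of the resulting twists have nonzero $\rH^{i}$ using a support estimate for the Fourier--Mellin transform.

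\textbf{Step 1: projection formula.} Since $[m]$ is finite étale, the projection formula gives
\[
\rH^{i}\bigl(A,[m]^{*}P\bigr)\cong\rH^{i}\bigl(A,P\otimes[m]_{*}\Fl\bigr).
\]
Here $[m]_{*}\Fl$ is the local system attached to the regular representation $\Fl[A[m]]$.

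\textbf{Step 2: reduction to $i\geq 0$.} The map $[m]$ is étale, so $[m]^{*}$ commutes with Verdier duality. Applying this to $D(P)$ reduces the statement to $i\geq0$; we assume this from now on.

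\textbf{Step 3: splitting off the prime-to-$\ell$ part.} Write $m=m'\ell^{n}$ with $\ell\nmid m'$, and enlarge scalars to $\ol{\Fl}$, which does not change dimensions. Then
\[
\Fl[A[m]]\otimes\ol{\Fl}\cong\bigoplus_{\chi}\cL_{\chi}\otimes\ol{\Fl}[A[\ell^{n}]],
\]
where $\chi$ runs over the $m'^{2g}$ characters of $A[m']$. Hence
\[
\dim\rH^{i}\bigl(A,[m]^{*}P\bigr)=\sum_{\chi}\dim\rH^{i}\bigl(A,[\ell^{n}]^{*}(P\otimes\cL_{\chi})\bigr).
\]

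\textbf{Step 4: the $\ell$-power part.} Let $\Lambda=\ol{\Fl}[[\pi_{1}(A)^{(\ell)}]]\cong\ol{\Fl}[[x_{1},\dots,x_{2g}]]$ with $x_{j}=t_{j}-1$. Let $\FM(Q)\in D^{b}_{\mathrm{coh}}(\Lambda)$ denote the completed Fourier--Mellin transform, i.e.\ $R\Gamma(A,Q\otimes\Ldot)$ for the universal pro-$\ell$ local system $\Ldot$.

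Because $(1+x)^{\ell^{n}}-1=x^{\ell^{n}}$ in characteristic $\ell$, we get
\[
R\Gamma\bigl(A,[\ell^{n}]^{*}Q\bigr)\cong\FM(Q)\otimes^{L}_{\Lambda}\Lambda/(x_{1}^{q},\dots,x_{2g}^{q}),\qquad q=\ell^{n}.
\]
The Tor spectral sequence bounds $\dim\rH^{i}$ by
\[
\sum_{j\geq0}\lgth\Tor_{j}\bigl(\rH^{i+j}\FM(Q),\Lambda/(x^{q})\bigr).
\]
These Tor groups are Koszul homology on $x_{1}^{q},\dots,x_{2g}^{q}$. For a finitely generated module $M$ of Krull dimension $d$ they have length $O(q^{d})$. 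This follows by Hilbert--Samuel, since $(x^{q})\supseteq\frm^{2gq}$, and the subquotients of the Koszul complex are controlled by quotients $M/\frm^{N}M$.

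So the key input is the codimension estimate
\[
\dim\Supp\rH^{s}\FM(Q)\leq 2g-2|s| .
\]
Here dimension is measured so that the ring has dimension $2g$ and the estimate is the one giving $q^{2(g-i)}$ growth. Since $|i+j|\geq i$ for $i\geq 0$, every Tor term is then $O(q^{2(g-i)})$.

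\textbf{Step 5: uniformity over $\chi$.} We need the constant $C$ from Step 4 for $Q=P\otimes\cL_{\chi}$ to be independent of $\chi$. At the same time, the number of $\chi$ for which $\rH^{*}\FM(P\otimes\cL_{\chi})$ is supported near the origin should be $O(m'^{2(g-i)})$. I would package all twists into one transform over the full character space, i.e.\ the tame completion at every character. I would then prove that its cohomology supports are finite unions of torsion translates of formal subtori of the right codimension, in the style of Gabber--Loeser and Kramer--Weissauer. To do this I would spread $(A,P)$ out to a finitely generated base and use the structure of jump loci over finite fields.

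Given this structure, counting $m'$-torsion characters inside a subtorus of codimension $2i$ gives $O(m'^{2g-2i})$ characters. Each contributes $O(\ell^{2n(g-i)})$ by Step 4, with constants bounded by the generic ranks of the finitely many components. Multiplying gives $C_{P}\,m^{2(g-i)}$.

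\textbf{Main obstacle.} The main obstacle is the sharp support estimate with $\Fl$-coefficients, i.e.\ codimension $2|s|$ for $\rH^{s}$ of the completed transform, where Hard Lefschetz is unavailable. I would first attempt induction on $\dim\Supp P$ via a generic-hyperplane or isogeny fibration argument, using Artin vanishing on the fibres to bound the generic rank of each $\rH^{s}$ on strata.
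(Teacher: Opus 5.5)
The proposal has a genuine gap: it runs the paper's logic in reverse, and the step it rests on is not established. The key input of your Step~4, \(\codim\Supp\rH^{s}(\widehat{\FM}_A(Q))\geq 2s\) with \(\Fl\)-coefficients, is Theorem~\ref{thm:C}. The paper \emph{deduces} that estimate from Theorem~\ref{thm:A}. Proposition~\ref{prop:HS-machine} is essentially the converse of your Step~4: it turns the length bounds \(C\,m^{2(g-i)}\) along \(S/I_m\) into codimension bounds. So unless you prove the support estimate independently, the argument is circular, and you do not prove it.

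The existing \(2|i|\) bounds (Schnell, Bhatt--Schnell--Scholze, Esnault--Kerz) all rely on Hard Lefschetz, which fails mod \(\ell\). The Bhatt--Schnell--Scholze argument for arbitrary coefficient fields avoids it but gives only codimension \(|i|\), and only over \(\bbC\). Fed into your Step~4, that yields \(O(m^{2g-i})\), not \(O(m^{2(g-i)})\).

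The induction sketched under ``Main obstacle'' does not close either. Artin vanishing on the affine complement of a hyperplane section \(Z\subset Y\) only transports cohomology in degrees \(\neq0\) between \(Y\) and \(Z\); this is exactly Lemma~\ref{lem:one-cut}. Nothing in it bounds the degree-\(0\) group. Bounding generic ranks on strata also does not show that \(\rH^{s}\) vanishes at the generic points of strata of dimension \(>2g-2s\), which is what a support estimate requires.

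Step~5 is a second, independent gap, and Theorem~\ref{thm:A} is asserted for every \(m\) invertible in \(k\), so it cannot be skipped. For the prime-to-\(\ell\) part you need two things you do not supply. First, a description of \(\Fl\)-jump loci over the full character space as finite unions of torsion translates of subtori. This is not available in this generality: Gabber--Loeser treat \(\ol{\bbQ}_\ell\)-coefficients under resolution-type hypotheses, and spreading out to a finite field does not produce it. Second, constants in Step~4 that are uniform over the infinitely many twists \(P\otimes L_{\chi}\), which you do not address.

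The missing idea is to control Euler characteristics on a transversal complete-intersection flag instead of supports of the transform. This makes the Fourier--Mellin transform unnecessary for Theorem~\ref{thm:A}. The paper chooses smooth complete intersections \(Y_0\subset\cdots\subset Y_g=A\) of members of a very ample \(|H|\). Each \(Y_{r-1}\hookrightarrow Y_r\) is properly transversal to \(\SSupp(Q_r)\), where \(Q_r=([m]^{*}P)|_{Y_r}[r-g]\) is perverse.

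Saito's index formula and his divisor-pullback formula for characteristic classes give \(\chi(Y_r,Q_r)=\sum_{a\geq g-r}\binom{a-1}{a-g+r}c_a([m]^{*}P)\). The \'etale pullback formula, with \([m]^{*}h=m^{2}h\) and \(\deg[m]=m^{2g}\), gives \(c_a([m]^{*}P)=m^{2(g-a)}c_a(P)\). Hence \(|\chi(Y_r,Q_r)|\leq C\,m^{2r}\).

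Artin vanishing on \(Y_r\setminus Y_{r-1}\) then gives Katz's recursion \(B_r\leq|\chi(Y_r,Q_r)|+4B_{r-1}\) for the total Betti numbers. Here \(B_0\) is at most \(H^{g}\) times the maximal total stalk dimension of \(P\), independent of \(m\). Iterating the cuts shows \(\rH^{i}(A,[m]^{*}P)\) is a quotient of \(\rH^{0}(Y_{g-i},Q_{g-i})\), so \(\dim\rH^{i}(A,[m]^{*}P)\leq B_{g-i}=O(m^{2(g-i)})\) for \(i\geq0\).

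This treats every \(m\) invertible in \(k\) uniformly, with no splitting into \(\ell\)-primary and prime-to-\(\ell\) parts and no structure theory of jump loci. The transform enters only afterwards, to derive Theorems~\ref{thm:B} and~\ref{thm:C} from Theorem~\ref{thm:A}.
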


Our proof of Theorem~\ref{thm:A} proceeds by adapting Katz's argument \cite[Theorem~2]{Kat01} to perverse
coefficients. This involves reducing bounds for sums of Betti numbers (and hence for individual Betti
numbers using a Lefschetz argument) to estimates for Euler characteristics. The latter are obtained from
T.~Saito's theory of characteristic cycles and characteristic classes \cite{Sai17}.

Theorem~\ref{thm:A} follows from Theorem~\ref{thm:polarized}, which gives a similar bound for pullback by
any \textit{polarized} \'etale self-map \(f\) of a smooth projective variety (this hypothesis is
restrictive, see Remark~\ref{rem:polarized-endomorphisms}).\footnote{An \'etale endomorphism of a projective
variety is finite, being proper and quasi-finite.}

\subsection{Vanishing along the tower}
\label{sec:tower-statement}

The passage from Theorem~\ref{thm:A} to the vanishing along the tower goes through the Fourier--Mellin
transform, more specifically its completed stalk at the trivial character.

Let \(S=\Fl[[\Tl A]]\) be the completed group algebra of the Tate module, a regular local ring of
dimension \(2g\), and let \(\widehat{\FM}_A(P)\) be the completed stalk at the trivial character of the
\(\Fl\)-linear Fourier--Mellin transform of \(P\), a perfect complex over \(S\)
(Lemma~\ref{lem:FM-finiteness}, compare \cite[\S2]{BhattSchnellScholze2018} and \cite[\S4]{Bha26} for
compact complex tori).

For \(m=\ell^{n}\) let \(S_m=\Fl[A[m]]\), where as usual \(A[m]\) is the group of \(m\)-torsion points of
\(A\). The finite levels of the tower are the derived restrictions of the transform to the quotients \(S_m\),
\begin{equation}\label{eq:finite-level}
  \widehat{\FM}_A(P)\otimes^{L}_{S} S_m\ \simeq\ R\Gamma\bigl(A,[m]^{*}P\bigr).
\end{equation}
Thus Theorem~\ref{thm:A} bounds the cohomology of the transform on infinitesimal neighbourhoods of the
trivial character. These bounds imply that \(\widehat{\FM}_A(P)\in D^{\geq0}(S)\) (see \S\ref{sec:outline}),
and this in turn implies the following theorem.

\begin{thm}\label{thm:B}
Let \(A\) be an abelian variety over an algebraically closed field \(k\), let \(\ell\) be a prime
invertible in \(k\), and let \(F\) be a constructible \(\Fl\)-sheaf on \(A\). There is an integer
\(e\geq0\), depending on \(F\), such that the pullback map
\[
  [\ell^{e}]^{*}\colon\rH^{i}\bigl(A,[\ell^{n}]^{*}F\bigr)\longrightarrow\rH^{i}\bigl(A,[\ell^{n+e}]^{*}F\bigr)
\]
is zero for all \(n\geq0\) and all \(i>\dim\Supp F\). In particular
\[
  \varinjlim_n\ \rH^{i}\bigl(A,[\ell^n]^{*}F\bigr)\ =\ 0
  \qquad\text{for all } i>\dim\Supp F.
\]
\end{thm}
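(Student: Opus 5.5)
We derive Theorem~\ref{thm:B} from the statement that the completed transform is connective in the sense \(\widehat{\FM}_A(P)\in D^{\geq0}(S)\) for every perverse sheaf \(P\); the introduction says this follows from Theorem~\ref{thm:A}, and we take it as given. The plan has three steps: reduce from \(F\) to perverse sheaves, turn connectivity into an \(e\)-annihilation statement at each finite level \(S_m\), and reassemble.

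\textbf{Step 1 (reduction to perverse sheaves).} Let \(d=\dim\Supp F\). By Artin vanishing on the affine pieces of a stratification of \(\Supp F\), the complex \(F[d]\) lies in \({}^{p}D^{\leq0}\). It has only finitely many nonzero perverse cohomology sheaves \(P_j={}^{p}\mathcal{H}^{-j}(F[d])\), for \(0\leq j\leq d\). The perverse truncation triangles are compatible with pullback along \([\ell^{n}]\), which is \'etale and hence perverse \(t\)-exact. So it suffices to prove the following claim: for each perverse \(P\) there is an \(e_P\) such that
\[
  [\ell^{e_P}]^{*}\colon\rH^{i}(A,[\ell^{n}]^{*}P)\to\rH^{i}(A,[\ell^{n+e_P}]^{*}P)
\]
vanishes for all \(i>0\) and all \(n\). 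Indeed, \(\rH^{i}(F)=\rH^{i-d}(F[d])\) with \(i-d>0\). A dévissage along the finitely many triangles then shows that composing the pullbacks for the pieces kills the class, with \(e=\sum_j e_{P_j}\). In this step each shifted piece \(P_j[j]\) contributes in degrees \(i-d+j>0\) as well, so the claim applies to every piece.

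\textbf{Step 2 (annihilation at a single level).} Put \(K=\widehat{\FM}_A(P)\). This is a perfect complex in \(D^{\geq0}(S)\), so it can be represented by a finite free complex concentrated in degrees \([0,N]\) (after also using amplitude considerations from perfectness). By \eqref{eq:finite-level}, \(\rH^{i}(A,[\ell^{n}]^{*}P)=\rH^{i}(K\otimes^{L}_{S}S_{\ell^{n}})\). Under this identification, the pullback \([\ell^{e}]^{*}\) should correspond to the map induced by the transition \(S_{\ell^{n+e}}\to S_{\ell^{n}}\) composed with the norm/transfer map \(S_{\ell^{n}}\to S_{\ell^{n+e}}\). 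The transfer is multiplication by the norm element \(N_e=\sum_{x\in A[\ell^e]}[x]\). Pinning down exactly which module map on the \(S\)-side realises \([\ell^{e}]^{*}\) is part of the work; I would check it using the projection formula for the \(A[\ell^{n+e}]\)-torsor \([\ell^{n+e}]\).

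Write \(\frm\) for the maximal ideal of \(S\) and \(I_n=\ker(S\to S_{\ell^n})\). Choose coordinates \(S=\Fl[[t_1,\dots,t_{2g}]]\) with \(t_j=[\gamma_j]-1\). Then \(I_n=(t_j^{\ell^n})\), and the transfer \(S/I_n\to S/I_{n+e}\) is multiplication by \(\prod_j t_j^{\ell^{n+e}-\ell^{n}}\).

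\textbf{Step 3 (key estimate, the main obstacle).} We need the following. If \(K\) has no cohomology in negative degrees, then multiplication by this product kills \(\rH^{i}(K\otimes S/I_n)\to\rH^{i}(K\otimes S/I_{n+e})\) for \(i>0\), uniformly in \(n\). The positive-degree classes come from the Koszul-type \(\Tor_{j}^{S}(\rH^{i+j}K,S/I_n)\) and from \(\rH^{i}(K)/I_n\). The \(\rH^i(K)\) for \(i>0\) are finitely generated, so each is killed by some power \(\frm^{c}\), and \(c\ell^{e}\) exceeds that power for \(e\gg0\). The \(\Tor\) contributions are handled by the explicit Koszul resolution of \(S/I_n\), which is a complete intersection in the regular sequence \(t_j^{\ell^n}\). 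The comparison map of Koszul complexes from level \(n\) to level \(n+e\) multiplies degree-\(j\) terms by products of \(t^{\ell^{n+e}-\ell^n}\), which lie in \(\frm^{\ell^{n}(\ell^e-1)}\). To get uniformity in \(n\), I would use a spectral sequence filtering by \(\rH^{*}(K)\). This argument is the delicate point: one must get an \(e\) independent of \(n\), and my first attempt would be to exploit that the needed annihilation exponent grows like \(\ell^{n}\) while the gain is \(\ell^{n}(\ell^{e}-1)\). Once the claim holds, passing to the colimit gives the final assertion of Theorem~\ref{thm:B}.
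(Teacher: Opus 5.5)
\textbf{Verdict: genuine gap.} Your Step~1 d\'evissage is fine. Composing annihilation exponents, \(e=\sum_j e_{P_j}\), is a valid alternative to the paper's route, which puts \(\widehat{\FM}_A(\bbD(F[s]))\) into the extension-closed \(D^{\geq0}(S)\) and applies Lemma~\ref{lem:tower-vanishing} once. Your guess about the norm element is also right. \([\ell^e]^*\) is \(K\otimes^L_S\) applied to \(S/I_n\xrightarrow{\cdot N}S/I_{n+e}\), with \(N=\prod_j t_j^{\ell^{n+e}-\ell^n}\) and no transition composed.

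The gap is that Steps~2--3 attach the connectivity hypothesis to the wrong complex. The implication you need is: \(K\in D^{\geq0}(S)\) forces \(\cdot N\) to act by zero on \(\rH^i(K\otimes^L_S S/I_n)\) for \(i>0\). This is false. For \(K=S[-1]\) the map on \(\rH^1\) is \(S/I_n\xrightarrow{\cdot N}S/I_{n+e}\), which is injective (compare monomials). Indeed, Lemma~\ref{lem:tower-vanishing} with \(F=\bbD P\) shows that connectivity of \(\widehat{\FM}_A(P)\) is equivalent to uniform vanishing of the positive-degree pullbacks for \(\bbD P\), not for \(P\).

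Two supporting claims are also wrong.
\begin{enumerate}
\item \(K\) cannot be represented by free modules in degrees \([0,N]\). The reason is that \(K\otimes^L_S\Fl\simeq R\Gamma(A,P)\) usually has negative-degree cohomology; a perfect complex in \(D^{\geq0}(S)\) need not have Tor-amplitude in degrees \(\geq0\), as \(S/\frm\) shows.
\item A finitely generated \(\rH^i(K)\) need not be killed by a power of \(\frm\). In the paper's sharpness example \(\widehat{\FM}_A(i_{B*}\Fl[b])\simeq(S/\frm_BS)[-b]\), the module \(\rH^b\) has support of dimension \(2(g-b)\).
\end{enumerate}

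Two ideas are missing.

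\emph{Duality.} The pullbacks for \(P\) are the \(\Fl\)-duals of the traces for \(\bbD P\) (see (\ref{para:transitions})). Those traces are realized by the quotients \(S_{m'}\to S_m\) on \(\widehat{\FM}_A(\bbD P)\). So the input must be \(\widehat{\FM}_A(\bbD P)\in D^{\geq0}(S)\). Within your set-up you could instead use \(\RHom_S(\widehat{\FM}_A(P),S)\in D^{\geq0}(S)\), which is Corollary~\ref{cor:connectivity}(1). You would combine it with the fact that, under the Gorenstein duality of \(S_m\), multiplication by \(N\) is dual to the quotient \(S_{m'}\to S_m\).

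\emph{Uniformity in \(n\).} This is exactly what your Step~3 leaves open, and with only \(K\in D^{\geq0}\) no argument can supply it, by the example above. The paper's route is as follows.
\begin{enumerate}
\item Represent \(K=\widehat{\FM}_A(\bbD P)\) by a bounded finite free complex \(D^\bullet\), exact in degrees \(<0\).
\item Take an Artin--Rees constant \(c\) for the boundaries: \(B^{j+1}\cap\frm^rD^{j+1}\subseteq\frm^{r-c}B^{j+1}\).
\item Let \(z\in D^j\), \(j<0\), be a cocycle modulo \(I_{m'}\subseteq\frm^{m'}\). Then \(dz=dw\) with \(w\in\frm^{m'-c}D^j\), so \(z-w\) is a boundary.
\item Since \(\frm^{2g(m-1)+1}\subseteq I_m\), we get \(\frm^{m'-c}\subseteq I_m\) once \(\ell^e\geq 2g+c+1\).
\item Hence the quotient maps vanish on \(\rH^{<0}\) with \(e\) independent of \(n\). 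Dualizing gives the pullback vanishing in positive degrees.
\end{enumerate}
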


The uniform form of Theorem~\ref{thm:B} arose from a question of Esnault on the defect of Hard Lefschetz
with finite coefficients.

The proof of Lemma~\ref{lem:tower-vanishing} allows us to choose any \(e\) such
that \(\ell^{e}\geq2g+c+1\).
Here \(c\) is an Artin--Rees constant of a free resolution of the completed transform of the Verdier dual
of \(F[\dim\Supp F]\).
Esnault \cite[\S5, Remark~3]{Esn21} raises the question of the least such exponent \(e\) for the mock
Frobenius covers and gives the example of a smooth conic tangent to the three coordinate lines for
\(\ell=2\), for which \(e=2\).
The analogue here is a smooth curve \(C\subseteq A\) whose inclusion factors
through \([\ell^{v}]\) and not through \([\ell^{v+1}]\), for which the least exponent is \(v+1\).

\subsection{Codimension estimates}
\label{sec:codim-statements}

The cohomology sheaves of \(\widehat{\FM}_A(P)\) satisfy the following codimension estimate.

\begin{thm}\label{thm:C}
Let \(P\in\Perv(A,\Fl)\). Then
\[
  \codim_{\Spec S}\ \Supp\ \rH^{i}\bigl(\widehat{\FM}_A(P)\bigr)\ \geq\ 2i \qquad\text{for all } i\geq 0.
\]
\end{thm}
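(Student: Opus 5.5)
The plan is to read off the codimension of each cohomology module of the perfect complex \(K:=\widehat{\FM}_A(P)\) over the regular local ring \(S\) (dimension \(2g\)) from how fast its derived restrictions to the Artinian quotients \(S_m\), \(m=\ell^n\), grow. Theorem~\ref{thm:A} and \eqref{eq:finite-level} give
\[
\dim_{\Fl}\rH^{i}\bigl(K\otimes^{L}_{S}S_m\bigr)\leq C_P\,m^{2(g-|i|)} .
\]
The ideals \(I_m=\ker(S\to S_m)\) are generated by \(([m]\text{-translates of})\ \gamma^{m}-1=(\gamma-1)^{m}\) for \(\gamma\) in a \(\bbZ_\ell\)-basis of \(\Tl A\). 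Writing \(S\cong\Fl[[x_1,\dots,x_{2g}]]\) with \(x_j=\gamma_j-1\), we get \(I_m=(x_1^{m},\dots,x_{2g}^{m})\), a Frobenius-type power of the maximal ideal \(\frm\), and \(S_m\) is a complete intersection with \(\lgth S_m=m^{2g}\). So the bound says that \(\rH^i\) of the derived restriction has length \(O(\lgth(S_m)\cdot m^{-2|i|})\).

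I would argue by descending induction on \(i\), starting from the top degree \(t\) in which \(K\) has nonzero cohomology. (The text asserts that Theorem~\ref{thm:A} already forces \(K\in D^{\geq0}(S)\), and I take this as the base range.) For the top degree, right exactness of \(\otimes\) gives \(\rH^t(K\otimes^L S_m)=\rH^t(K)\otimes_S S/I_m\). Now let \(M\) be a finitely generated \(S\)-module of dimension \(d\). Since \(I_m\subseteq\frm^{m}\) and \(\frm^{2g(m-1)+1}\subseteq I_m\), Hilbert--Samuel theory gives \(\lgth(M/I_mM)\geq\lgth(M/\frm^{m}M)\sim c\,m^{d}\) with \(c>0\). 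Comparing with \(C_P m^{2(g-t)}\) yields \(d\leq 2g-2t\), that is, \(\codim\Supp\rH^t(K)\geq 2t\).

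For lower degrees \(i\), localize instead of working at the closed point. Take a prime \(\mathfrak p\) in \(\Supp\rH^i(K)\) of height \(h\); we want \(h\geq 2i\). Localizing at \(\mathfrak p\) makes \(i\) the top degree of \(K_{\mathfrak p}\) once the higher cohomology vanishes there, and by induction the higher cohomology is supported in codimension \(\geq 2(i+1)\). So after choosing \(\mathfrak p\) minimal in \(\Supp\rH^i(K)\), either \(h\geq 2i+2\) or \(\rH^i\) is the top cohomology of \(K_{\mathfrak p}\). In the latter case I need a length estimate over \(S_{\mathfrak p}\) analogous to Theorem~\ref{thm:A}. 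The bound does not localize directly. Instead, I would use a spectral sequence, or truncation triangles \(\tau^{>i}K\to\cdots\), to bound \(\rH^i(\tau^{\leq i}K\otimes^L S_m)\) by the length of \(\rH^i(K\otimes^L S_m)\) plus contributions from \(\rH^{j}(\rH^{i'}(K)\otimes^L S_m)\) with \(i'>i\). Those contributions are Tor-lengths \(\lgth\Tor_q^S(\rH^{i'}K,S/I_m)\). For modules of dimension \(\leq 2g-2i'\) they are \(O(m^{2g-2i'})\), since \(S/I_m\) has a Koszul resolution and each Koszul homology is a subquotient of copies of \(M/I_mM\)-type modules. All these are \(O(m^{2g-2i-2})\), so \(\lgth(\rH^i(K)/I_m\rH^i(K))=O(m^{2g-2i})\), and Hilbert--Samuel again gives \(\dim\rH^i(K)\leq 2g-2i\). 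This handles all \(i\) uniformly, and no localization is needed after all.

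The main obstacle is the control of the Tor/Koszul lengths uniformly in \(m\): showing \(\lgth H_q(x_1^m,\dots,x_{2g}^m;M)=O(m^{\dim M})\). I would try to prove it by induction on \(\dim M\) via prime filtrations, reducing to \(M=S/\mathfrak q\). Then \(H_q\) is a subquotient of \(\bigwedge^{q}(M/I_mM)^{\oplus}\)-type terms, whose length is at most \(\binom{2g}{q}\lgth(M/I_mM)=O(m^{\dim M})\) by Hilbert--Samuel, using \(I_m\supseteq\frm^{2gm}\).
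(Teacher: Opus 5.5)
Your architecture matches the paper's proof, which feeds Theorem~\ref{thm:A} into Proposition~\ref{prop:HS-machine}. The shared steps are:
\begin{enumerate}
\item descending induction on \(i\);
\item bounding the edge term \(\rH^i(K)/I_m\rH^i(K)\) by \(\lgth\rH^i(K\otimes^L_SS_m)\) plus Tor-lengths of the higher \(\rH^{i'}(K)\) (your truncation triangles are the hyper-Tor spectral sequence);
\item Hilbert--Samuel growth along \(m=\ell^n\).
\end{enumerate}
You do not need \(K\in D^{\geq0}(S)\). The induction starts at the top degree, which is \(\le g\) by \ref{item:M2}, and lower degrees never enter.

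\textbf{The gap.} The step that fails is your proof of the key estimate \(\lgth H_q(x_1^m,\dots,x_{2g}^m;M)=O(m^{\dim M})\). The Koszul homology \(H_q\) is a subquotient of \(M^{\binom{2g}{q}}\), which has infinite length when \(\dim M>0\). That \(I_m\) kills \(H_q\) does not make \(H_q\) a subquotient of \((M/I_mM)^{\binom{2g}{q}}\); the latter is the Koszul homology of \(M/I_mM\), not of \(M\). The inequality \(\lgth H_q\le\binom{2g}{q}\lgth(M/I_mM)\) that you draw from it is false even for \(M=S/\mathfrak q\). At \(m=1\) it says \(\mu(\mathfrak q)=\lgth\Tor^S_1(\Fl,S/\mathfrak q)\le 2g\) for every prime \(\mathfrak q\). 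But for \(g\ge6\), the ideal of \(2\times2\) minors of the generic \(2\times g\) matrix in \(x_1,\dots,x_{2g}\) is prime with \(\binom g2>2g\) minimal generators.

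\textbf{How to repair it.} The estimate itself is true. It is exactly the input the paper does not prove but quotes from \cite[Corollary~7]{GR86}. For a self-contained proof along \(m=\ell^e\), use Frobenius instead of a termwise bound.
\begin{enumerate}
\item Since \(S\) is regular, \(F\) is flat (Kunz), and \(F^*(S/I_m)=S/I_{\ell m}\). Hence \(\Tor^S_q(D,S/I_{\ell m})\cong\Tor^S_q(F_*D,S/I_m)\).
\item For \(D=S/\mathfrak q\) of dimension \(d\), \(F_*D\) is a finite torsion-free \(D\)-module of rank \(\ell^d\). This gives an exact sequence \(0\to D^{\ell^d}\to F_*D\to C\to0\) with \(\dim C<d\) and \(C\) independent of \(m\).
\item By induction on dimension (via prime filtrations), \(t(m)=\lgth\Tor^S_q(D,S/I_m)\) satisfies \(t(\ell m)\le\ell^d\,t(m)+c\,m^{d-1}\). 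Hence \(t(\ell^e)=O(\ell^{ed})\).
\item The base case \(d=0\) is your subquotient bound, which is valid for modules of finite length.
\end{enumerate}
With that lemma supplied, your argument coincides with the paper's proof.
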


Let \(E/\Ql\) be a finite extension with ring of integers \(\cO\), and let
\[
  R=\cO[[\Tl A]],\qquad \mathfrak R=R\otimes_{\cO}\overline{\bbQ}_{\ell}.
\]
The ring \(\mathfrak R\) is noetherian and Jacobson. Following Esnault--Kerz \cite[\S3.1]{EK21}, we
identify its maximal spectrum \(\operatorname{Spm}(\mathfrak R)\) with the group of continuous
homomorphisms \(\Tl A\to\overline{\bbQ}_{\ell}^{\times}\), endowed with the Zariski topology. For
\(F\in\Dbc(A,\overline{\bbQ}_{\ell})\), let \(\mathfrak{FM}_A(F)\in D^b_{\mathrm{coh}}(\mathfrak R)\) denote
the Fourier--Mellin transform \(\mathfrak{FM}_{\Tl A}(A,F)\) of \cite[Definition~4.4]{EK21}. Theorem~\ref{thm:C}
implies the following codimension estimate.

\begin{cor}\label{cor:D1}
Let \(P\in\Perv(A,\overline{\bbQ}_{\ell})\). Then
\[
  \codim_{\Spec\mathfrak R}\ \Supp\ \rH^{i}\bigl(\mathfrak{FM}_A(P)\bigr)\ \geq\ 2i
  \qquad\text{for all } i\geq 0.
\]
The same estimate holds on \(\operatorname{Spm}(\mathfrak R)\).
\end{cor}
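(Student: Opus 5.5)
Corollary~\ref{cor:D1} will be deduced from Theorem~\ref{thm:C} in three steps. First we pass to an integral model with finite coefficients. Then we localize at each character. Finally we invert \(\ell\).

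\textbf{Integral model.} Since \(P\) is constructible, it is defined over some finite extension \(E\), so we may take \(P\in\Perv(A,E)\). Choose an \(\cO\)-lattice \(P_{\cO}\in\Dbc(A,\cO)\) with \(P_{\cO}\otimes_{\cO}E\simeq P\). The Esnault--Kerz transform is formed integrally: \(\mathfrak{FM}_A(P)\) is obtained from the perfect \(R\)-complex \(K=\FM(P_{\cO})\) by base change \(R\to\mathfrak R\). We have \(K\otimes^{L}_{\cO}\cO/\lambda\simeq\FM(P_{\cO}\otimes^L\cO/\lambda)\), where \(\lambda\) is a uniformizer and \(\cO/\lambda\) is a finite extension of \(\Fl\). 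The complex \(\bar P=P_{\cO}\otimes^L\cO/\lambda\) need not be perverse. However, it lies in \({}^pD^{[-1,0]}\), so it has a two-step filtration by shifted perverse sheaves \({}^p\mathcal H^{-1}[1]\) and \({}^p\mathcal H^0\). Theorem~\ref{thm:C}, extended to \(\cO/\lambda\)-coefficients by restriction of scalars, then gives
\[
\codim\Supp\rH^i(K/\lambda)\ \geq\ 2(i-1)\quad\text{and}\quad \codim\Supp\rH^{i}(K/\lambda)\geq 2i \text{ up to the shifted piece.}
\]
This off-by-one loss is the main obstacle. The fix I would try first is to choose the lattice well. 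Take \(P_{\cO}\) to be perverse and torsion-free in the \(\cO\)-perverse t-structure, meaning \(\lambda\) is injective on it; for instance, take the image of a lattice under \({}^p\tau\) modulo torsion. Then \(\bar P\) is perverse and Theorem~\ref{thm:C} applies with no shift. Here the completed stalk at the trivial character should be read as the completion of \(R/\lambda=\Fl[[\Tl A]]\otimes\cO/\lambda\) at its unique maximal ideal. Since \(R\) is local, that ideal is all of \(R/\lambda\).

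\textbf{Lifting to \(R\).} Set \(Z_i=\Supp\rH^i(K)\subseteq\Spec R\). The goal is \(\codim_{\Spec R[1/\ell]}(Z_i\cap\Spec R[1/\ell])\geq 2i\). Let \(\mathfrak p\) be a generic point of \(Z_i\) with \(\ell\notin\mathfrak p\). The ring \(R\) is regular local of dimension \(2g+1\) and catenary, and \(\mathfrak p\) lies in some height-one-larger prime \(\mathfrak q\) containing \(\lambda\). Arguing via the long exact sequence for \(K\xrightarrow{\lambda}K\to K/\lambda\), the support of \(\rH^{j}(K)/\lambda\) lies inside \(\Supp\rH^j(K/\lambda)\). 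If \(\lambda\) is not a zero divisor on the relevant cohomology, then \(\Supp\rH^i(K)\cap V(\lambda)=\Supp(\rH^i(K)/\lambda)\). Components of \(Z_i\) not contained in \(V(\lambda)\) meet \(V(\lambda)\) in codimension exactly one more, by Krull's principal ideal theorem in the catenary equidimensional ring \(R\). Hence
\[
\operatorname{ht}\mathfrak p+1\ \geq\ \codim_{\Spec R/\lambda}\bigl(\Supp\rH^i(K/\lambda)\bigr)+1\geq 2i+1.
\]
This gives \(\operatorname{ht}\mathfrak p\geq 2i\) in \(R\). Because \(\ell\notin\mathfrak p\), the same height holds in \(R[1/\ell]\).

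To handle \(\lambda\)-torsion in \(\rH^{i+1}(K)\), I would use the universal-coefficient exact sequence
\[
0\to\rH^i(K)/\lambda\to\rH^i(K/\lambda)\to\rH^{i+1}(K)[\lambda]\to0.
\]
Only the first term is needed, and it injects into \(\rH^i(K/\lambda)\), so the bound on its support is direct. Then I apply the Nakayama-type statement: for a finite \(R\)-module \(M\), \(\Supp(M/\lambda)=\Supp M\cap V(\lambda)\).

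\textbf{Passage to \(\mathfrak R\) and \(\operatorname{Spm}\).} The map \(R[1/\ell]\to\mathfrak R=R\otimes_{\cO}\overline{\bbQ}_\ell\) is a filtered colimit of finite étale extensions \(R\otimes_{\cO}E'\). It is therefore flat with zero-dimensional fibers, and heights of primes are preserved. Flatness also gives \(\rH^i(\mathfrak{FM}_A(P))=\rH^i(K)\otimes\mathfrak R\), whose support is the preimage of \(Z_i\). Together these give the codimension bound on \(\Spec\mathfrak R\). For \(\operatorname{Spm}(\mathfrak R)\), we use that \(\mathfrak R\) is Jacobson. Maximal points are dense in every closed subset, and codimension in \(\operatorname{Spm}\) agrees with codimension in \(\Spec\), so the same estimate follows.
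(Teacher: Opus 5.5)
Your proposal is correct and follows the paper's proof essentially step for step. You take a torsion-free perverse integral form \(P_{\cO}\) (the paper's Lemma~\ref{lem:perverse-lattice}; the greatest torsion subobject you quotient by exists because \(\Perv(A,\cO)\) is noetherian) so that the reduction is perverse and Corollary~\ref{cor:kappa-codim} applies, then use the injection \(\rH^i(K)/\varpi\hookrightarrow\rH^i(K\otimes^L_R\overline R)\) with Nakayama, the principal ideal theorem and catenarity exactly as in Lemma~\ref{lem:special-generic-codim}, and finish with flat base change to \(\mathfrak R\) and going down. The only difference is the passage to \(\operatorname{Spm}(\mathfrak R)\): you use the Jacobson correspondence between closed subsets of \(\Spec\) and \(\operatorname{Spm}\), while the paper notes that every maximal ideal of \(\mathfrak R\) has height \(2g\), and both arguments work.
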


For \(\chi\in\operatorname{Spm}(\mathfrak R)\), let \(L_{\chi}\) be the corresponding rank one
\(\overline{\bbQ}_{\ell}\)-local system on \(A\). Corollary~\ref{cor:D1} and Verdier duality give
\[
  \codim_{\operatorname{Spm}(\mathfrak R)}
  \bigl\{\chi\mid\rH^i(A,P\otimes L_{\chi})\neq0\bigr\}\ \geq\ 2|i|
  \qquad(i\in\bbZ)
\]
for the jump loci (Corollary~\ref{cor:jumping-loci}). Together with
Corollary~\ref{cor:connectivity}(2), this gives the following generic vanishing statements.

\begin{cor}\label{cor:D3}
Let \(P\in\Perv(A,\Lambda)\).
\begin{enumerate}[(1)]
\item If \(\Lambda=\bbF_{\ell}\), there is a closed subset \(Z\subseteq\Spec S\) of codimension at least
\(2\) such that, on \(\Spec S\setminus Z\), the complex \(\widehat{\FM}_A(P)\) is a locally free sheaf
concentrated in degree \(0\), whose rank is \(\chi(A,P)\geq0\).
\item If \(\Lambda=\overline{\bbQ}_{\ell}\), there is a closed subset
\(Z\subseteq\operatorname{Spm}(\mathfrak R)\) of codimension at least \(2\) such that, for every
\(\chi\notin Z\),
\[
  \rH^{i}(A,P\otimes L_{\chi})=0\quad(i\neq0),\qquad
  \dim_{\overline{\bbQ}_{\ell}}\rH^{0}(A,P\otimes L_{\chi})=\chi(A,P)\geq0.\footnotemark
\]
\end{enumerate}
\footnotetext{For non-negativity of \(\chi(A,P)\), see Franecki--Kapranov
\cite[Corollary~1.4]{FraneckiKapranov00} over \(\bbC\), and Debarre--Moonen
\cite[Proposition~3.7]{DebarreMoonen25} in arbitrary characteristic.}
\end{cor}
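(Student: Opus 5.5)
The plan is to treat both parts of Corollary~\ref{cor:D3} as consequences of the codimension estimates, applied to \(P\) and to its Verdier dual \(\bbD P\), together with the local criterion for a perfect complex to be a vector bundle.

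\emph{Part (1).} Write \(K=\widehat{\FM}_A(P)\), a perfect complex over the regular local ring \(S\) by Lemma~\ref{lem:FM-finiteness}. First I would show that \(K\in D^{\geq0}(S)\), as explained in \S\ref{sec:outline}. Theorem~\ref{thm:C} then says that \(\rH^i(K)\) has support of codimension \(\geq 2i\) for \(i\ge 0\). Let \(Z_1\) be the union of the supports of \(\rH^i(K)\) for \(i\geq1\); it has codimension \(\geq2\).

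Next I would use the duality of the transform: \(\widehat{\FM}_A(\bbD P)\) should be \(\RHom_S(K,S)\), up to the involution \(\chi\mapsto\chi^{-1}\) of \(\Spec S\) and a shift. This is how Corollary~\ref{cor:jumping-loci} extends to negative \(i\). Applying Theorem~\ref{thm:C} to \(\bbD P\), the complex \(K^\vee=\RHom_S(K,S)\) is also in \(D^{\geq0}\), and its cohomology in degrees \(\geq1\) is supported on a closed set \(Z_2\) of codimension \(\geq2\).

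Set \(Z=Z_1\cup Z_2\). On \(\Spec S\setminus Z\), at each prime \(\mathfrak p\), the perfect complex \(K_{\mathfrak p}\) has cohomology only in degree \(0\), so \(K_{\mathfrak p}\simeq M\) for a finitely generated module \(M\) of finite projective dimension. Its dual \(\RHom(M,S_{\mathfrak p})\) also lives only in degree \(0\), so \(\operatorname{Ext}^j(M,S_{\mathfrak p})=0\) for \(j>0\). Since \(M\) has finite projective dimension, the top nonzero Ext is nonzero, hence \(\operatorname{pd}M=0\) and \(M\) is free. Therefore \(K|_{\Spec S\setminus Z}\) is locally free in degree \(0\).

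Its rank is the Euler characteristic of the derived fibre at the closed point. By \eqref{eq:finite-level} with \(m=1\), this is \(\chi(A,P)\). The rank is constant because \(\Spec S\) is irreducible, and \(\chi(A,P)\ge0\) is simply the rank of a vector bundle (if \(Z\) is the whole space, the cited non-negativity results apply).

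The main obstacle is the duality compatibility \(\widehat{\FM}_A(\bbD P)\simeq \iota^*\RHom_S(K,S)\) with the correct shift. I would prove it from the definition of the completed transform as \(R\Gamma\) of \(P\) tensored with the universal pro-local system, via Verdier duality on the proper variety \(A\).

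\emph{Part (2).} I would run the same argument over \(\mathfrak R\), using Corollary~\ref{cor:D1} for \(P\) and \(\bbD P\) and the Esnault--Kerz duality for \(\mathfrak{FM}_A\). Away from a closed set \(Z\) of codimension \(\ge2\), \(\mathfrak{FM}_A(P)\) is then locally free in degree \(0\). The base change \(\mathfrak{FM}_A(P)\otimes^L_{\mathfrak R}\kappa(\chi)\simeq R\Gamma(A,P\otimes L_\chi)\), as in \cite{EK21}, then gives the vanishing of \(\rH^i(A,P\otimes L_\chi)\) for \(i\ne0\) when \(\chi\notin Z\), with \(\dim\rH^0=\operatorname{rank}\).

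The rank equals \(\chi(A,P)\) on each component of \(\operatorname{Spm}\mathfrak R\), since the Euler characteristic of \(P\otimes L_\chi\) is locally constant in \(\chi\). This is the point where Corollary~\ref{cor:connectivity}(2) enters: the complement of a codimension-\(2\) set in each component stays connected, so the rank is indeed constant there. Alternatively, one uses that \(\chi(A,P\otimes L_\chi)=\chi(A,P)\) holds directly for rank-one local systems.
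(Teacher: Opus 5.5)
Your proof is correct, but it handles the dual side through a global duality \(\iota^*\widehat{\FM}_A(\bbD P)\simeq\RHom_S(\widehat{\FM}_A(P),S)\) (and its analogue over \(\mathfrak R\)). The paper neither proves nor needs this.

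\emph{Part (1).} The paper takes \(Z\) to be the union of the supports of \(\rH^i(\cK)\) and \(\rH^i(\RHom_S(\cK,S))\) for \(i\geq1\). It bounds the second family by Remark~\ref{rem:dual-codimension}, i.e.\ by applying Proposition~\ref{prop:HS-machine} directly to \(\RHom_S(\cK,S)\). The reduction of that complex modulo \(I_m\) is \(\RHom_{S_m}(R\Gamma(A,[m]^*P),S_m)\), and Artinian Gorenstein duality only has to preserve lengths at each level. So no compatibility with transition maps or limits is required. This is exactly the outline argument you already invoke to get \(K\in D^{\geq0}(S)\). It therefore gives you \(Z_2\) directly, and the global duality is redundant in part (1).

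\emph{Part (2).} The paper never shows that \(\mathfrak{FM}_A(P)\) is locally free off \(Z\). It sets \(Z=\bigcup_{i\neq0}\overline{\Sigma^i(P)}\) and uses Corollary~\ref{cor:jumping-loci}, whose negative degrees come from the fibrewise duality \(\rH^i(A,P\otimes L_\chi)^\vee\simeq\rH^{-i}(A,\bbD P\otimes L_{\chi^{-1}})\). It then uses that the Euler characteristic of the fibres of the perfect complex \(\mathfrak{FM}_A(P)\) is constant on the spectrum of the domain \(\mathfrak R\).

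\emph{Cost and gain of your route.} It requires a real construction: a global pairing built from cup product and trace, checked to be perfect modulo \(\frm\) by Verdier duality on \(A\) and derived Nakayama. The shift must be zero, and with the paper's normalization of \(\bbD\) it is. In (2) you must also record that \(\mathfrak{FM}_A(P)\in D^{\geq0}\) near points off \(Z\). This follows from Corollary~\ref{cor:D1} for \(\bbD P\), the duality, and the criterion \ref{item:M3} over the regular ring \(\mathfrak R\). In return you get a structural statement from which Remark~\ref{rem:dual-codimension} follows from Theorem~\ref{thm:C}. In (2) you also get the stronger conclusion that \(\mathfrak{FM}_A(P)\) itself is locally free of rank \(\chi(A,P)\) off \(Z\), the exact \(\ell\)-adic counterpart of (1).

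Two small corrections. First, \(Z\) can never be all of \(\Spec S\), since its codimension is at least \(2\). Second, the rank computation needs no connectedness of the complement of \(Z\); Corollary~\ref{cor:connectivity}(2) is just the \(\Fl\)-statement (1) and plays no role there. The fibrewise Euler characteristic of a perfect complex is constant on the whole irreducible spectrum, so the rank is \(\chi(A,P)\) even if the trivial character lies in \(Z\).
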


Part (1) is Corollary~\ref{cor:connectivity}(2). We prove (2) in \S\ref{sec:algebraic-model}.

\subsection{Comparison with earlier work}
\label{sec:comparison}

We compare Theorem~\ref{thm:C} and Corollaries~\ref{cor:D1} to~\ref{cor:D3} with earlier work.

\emph{Characteristic zero.} Let \(A\) be a complex abelian variety.
\begin{enumerate}[(a)]
\item \emph{Coefficients of characteristic zero.} Kr\"amer--Weissauer
\cite[Theorem~1.1]{KraemerWeissauer2015} and Schnell \cite[Corollary~7.5]{Schnell2016} proved generic
vanishing for perverse sheaves with \(\bbC\)-coefficients, and Schnell proved the bound \(2|i|\) for the
jump loci with \(\bbC\)-coefficients \cite[Theorem~7.4]{Schnell2016}. Bhatt--Schnell--Scholze proved the
same bound with any characteristic zero coefficient field \cite[Theorem~3.1]{BhattSchnellScholze2018}.
Their proof uses Hard Lefschetz for rank one twists of arbitrary simple perverse sheaves.\footnote{Hard Lefschetz for every semisimple perverse sheaf, and more generally for every semisimple
holonomic \(\mathcal D\)-module, was conjectured by Kashiwara \cite{Kas98}. For perverse sheaves, Drinfeld
\cite{Dri01} deduced it from de Jong's conjecture on representations of arithmetic fundamental groups over
finite fields \cite{deJong01}, using Lafforgue's theorem \cite{Laf02}. Gaitsgory \cite{Gai07} proved the
required case of de Jong's conjecture, and B\"ockle--Khare \cite{BoeckleKhare06} proved it
independently under additional hypotheses. Sabbah \cite{Sab05} proved the case of semisimple local systems
through polarizable twistor \(\mathcal D\)-modules, and Mochizuki \cite{Moc11} the general case.}

\item \emph{General coefficients.} Bhatt--Schnell--Scholze proved generic vanishing for an arbitrary
coefficient field on a compact complex torus, using Artin vanishing on the universal cover
\cite[Corollary~2.10]{BhattSchnellScholze2018}. Their argument gives codimension at least \(|i|\) for the
jump loci in degree \(i\) \cite[\S1.2]{BhattSchnellScholze2018}.
\end{enumerate}

For perverse sheaves on a complex abelian variety with coefficients in a finite field of characteristic
\(\ell\), our argument improves the bound \(|i|\) to \(2|i|\) for the jump loci on the whole character variety
(Remark~\ref{cor:betti-torus}).

\emph{Characteristic \(p>0\).} Let \(A\) be an abelian variety over an algebraically closed field of
characteristic \(p\ne\ell\).
\begin{enumerate}[(a)]
\item \emph{Generic vanishing.} Weissauer proved generic vanishing when \(A\) and the
\(\overline{\bbQ}_{\ell}\)-perverse sheaf are defined over a finite field, and also for perverse sheaves
of geometric origin \cite[Introduction]{Wei16}.

\item \emph{Codimension estimates.} Esnault--Kerz proved the bound \(2|i|\) for the jump loci of
arithmetic \(\overline{\bbQ}_{\ell}\)-perverse sheaves \cite[Definition~5.1 and Theorem~1.5]{EK21}.
Their proof uses Hard Lefschetz for arbitrary rank one twists of arithmetic semisimple perverse sheaves
\cite[Theorem~5.4]{EK21}.
\end{enumerate}

Corollary~\ref{cor:D1} removes the arithmeticity hypothesis and hence gives the bound \(2|i|\) for the
jump loci of every \(\ell\)-adic perverse sheaf. Generic vanishing likewise holds for every perverse sheaf by
Corollary~\ref{cor:D3}.

\subsection{Outline of the proofs}
\label{sec:outline}

Figure~\ref{fig:leitfaden} summarizes the principal steps of the proofs.

\begin{figure}[t]
\centering
\begin{tikzpicture}[
  >=stealth,
  every node/.style={align=center},
  result/.style={draw, rounded corners, thick, text width=6.1cm,
    minimum height=1cm, inner sep=5pt, font=\small},
  source/.style={font=\footnotesize\itshape, text=black!65},
  dependency/.style={->, thick},
  explanation/.style={font=\footnotesize, fill=white, inner sep=2pt}
]
\node[source] (saito) at (-3.8,9.3) {Characteristic classes\\and transversality};
\node[source] (artin) at (3.8,9.3) {Artin vanishing\\and purity};
\node[result] (euler) at (-3.8,7.7)
  {Euler characteristic bounds\\Proposition~\ref{prop:graded-estimate}};
\node[result] (recursion) at (3.8,7.7)
  {Betti number recursion\\Lemmas~\ref{lem:one-cut} and~\ref{prop:recursion}};
\node[result] (betti) at (0,5.8)
  {Betti number bounds\\Theorems~\ref{thm:polarized} and~\ref{thm:A}};
\node[result] (codimension) at (-3.8,1.85)
  {Codimension estimates over \(\Fl\)\\Theorem~\ref{thm:C}};
\node[result] (connectivity) at (3.8,1.85)
  {Completed transform\\\(\widehat{\FM}_A(P)\in D^{\geq0}(S)\)};
\node[result] (elladic) at (-3.8,-0.65)
  {\(\ell\)-adic codimension estimates\\Corollary~\ref{cor:D1}};
\node[result] (tower) at (3.8,-0.65)
  {Vanishing along the tower\\Theorem~\ref{thm:B}};
\draw[dependency] (saito) -- (euler);
\draw[dependency] (artin) -- (recursion);
\draw[dependency] (euler) -- (betti);
\draw[dependency] (recursion) -- (betti);
\draw[thick] (betti.south) -- (0,4.0)
  node[explanation, midway] {\(m=\ell^n\)\\\ref{item:M1} and duality};
\draw[dependency] (0,4.0) -- (-3.8,4.0) -- (codimension.north)
  node[explanation, midway] {Proposition~\ref{prop:HS-machine}\\applied to \(\widehat{\FM}_A(P)\)};
\draw[dependency] (0,4.0) -- (3.8,4.0) -- (connectivity.north)
  node[explanation, midway] {Proposition~\ref{prop:HS-machine}\\applied to \(\RHom_S(\widehat{\FM}_A(P),S)\), and \ref{item:M3}};
\draw[dependency] (codimension) -- (elladic)
  node[explanation, midway] {integral forms and specialization\\
    Lemmas~\ref{lem:special-generic-codim} and~\ref{lem:perverse-lattice}};
\draw[dependency] (connectivity) -- (tower)
  node[explanation, midway] {Lemma~\ref{lem:tower-vanishing}\\and perverse truncation};
\end{tikzpicture}
\caption{Leitfaden}
\label{fig:leitfaden}
\end{figure}

Our use of Betti number estimates to prove Theorem~\ref{thm:C} was motivated by the second
author's recent joint work with Bah on uniform generic vanishing for tori
\cite{BahShuddhodan26}. There the complexity estimates of \cite{SFFK23} give bounds for the
number of points of the cohomology jump loci, and the exponents in these bounds measure codimension
(see Corollary~\ref{cor:character-count}(2) for a similar result).\footnote{This avoids making
effective the description of these loci as finite unions of translates of algebraic cotori, proved by
Gabber--Loeser under resolution of singularities and principalization hypotheses
\cite[Th\'eor\`eme~6.3.3]{GabberLoeser1996}.}

We first explain how Euler characteristic bounds on complete intersection flags give the Betti number
estimates of Theorem~\ref{thm:A}. Fix a very ample divisor \(H\) on \(A\), let \(m\geq1\) be invertible in
\(k\), and let \(P_m=[m]^{*}P\).
Lemma~\ref{lem:flags} gives a flag \(Y_0\subset\dots\subset Y_g=A\) of smooth complete intersections
with \(\dim Y_r=r\). For \(Q_r=P_m|^*_{Y_r}[r-g]\), each inclusion
\(Y_{r-1}\hookrightarrow Y_r\) is properly \(\SSupp(Q_r)\)-transversal, as in \ref{item:S5}.
Successive divisor pullback and Saito's index formula express \(\chi(Y_r,Q_r)\) through the
characteristic numbers \(c_a(P_m)\) by Proposition~\ref{prop:graded-estimate}. Lemma~\ref{lem:scaling}
gives \(c_a(P_m)=m^{2(g-a)}c_a(P)\), so there is a constant \(C_1\), depending only on \(P\) and
\(H\), such that \(\lvert\chi(Y_r,Q_r)\rvert\leq C_1m^{2r}\). Artin vanishing on the
successive affine complements gives the estimate of Lemma~\ref{lem:one-cut} for a single ample divisor,
and Lemma~\ref{prop:recursion} gives the recursion
\[
  B_{r}\ \leq\ \bigl|\chi(Y_{r},Q_{r})\bigr|+4B_{r-1},
  \qquad
  B_{r}=\sum_a\dim\rH^a(Y_{r},Q_{r}).
\]
Different cohomological degrees are controlled by members of the flag of different dimensions.
For \(0\leq i\leq g\), iterating Lemma~\ref{lem:one-cut} through \(i\) cuts realizes \(\rH^i(A,P_m)\) as a
subquotient of \(\rH^{0}(Y_{g-i},Q_{g-i})\), where \(Y_{g-i}\) has dimension \(g-i\). Consequently,
for a constant \(C_2\) depending only on \(P\) and \(H\),
\[
  \dim_{\Fl}\rH^i(A,P_m)\ \leq\ B_{g-i}\ \leq\ C_2m^{2(g-i)} \qquad (0\leq i\leq g).
\]
Verdier duality gives the negative degrees. This proves Theorem~\ref{thm:A}, which is deduced from
Theorem~\ref{thm:polarized} in \S\ref{sec:recursion}.

The remaining steps use commutative algebra over the regular local ring \(S\). For \(m=\ell^n\), the
isomorphism \eqref{eq:finite-level}, including its trace transition maps, is \ref{item:M1} at finite level.
For a finite \(S\)-module \(N\), the length of \(N\otimes_S S_m\) grows like \(m^{\dim\Supp N}\), so length
bounds along the tower constrain supports. Theorem~\ref{thm:A} gives the length bounds required by
Proposition~\ref{prop:HS-machine}, which converts them into codimension estimates using the hyper-Tor
spectral sequence and bounds for Tor groups. Applied to \(\widehat{\FM}_A(P)\), the proposition proves
Theorem~\ref{thm:C}. Applied to \(\RHom_S(\widehat{\FM}_A(P),S)\), whose derived restriction to \(S_m\)
is the \(S_m\)-linear dual of \(R\Gamma(A,[m]^{*}P)\), the proposition gives
\[
  \codim\Supp\rH^{j}\bigl(\RHom_S(\widehat{\FM}_A(P),S)\bigr)\ \geq\ 2j.
\]
The criterion \ref{item:M3} then gives \(\widehat{\FM}_A(P)\in D^{\geq0}(S)\) for every perverse \(P\), in
particular for \(\bbD P\). By Lemma~\ref{lem:tower-vanishing}, \(\widehat{\FM}_A(\bbD P)\) is the
\(\Fl\)-linear dual of the direct limit of the complexes \(R\Gamma(A,[\ell^n]^*P)\), which therefore lies in
\(D^{\leq0}\), and the Artin--Rees lemma, applied to a free resolution of \(\widehat{\FM}_A(\bbD P)\), shows
that the pullback maps of the tower vanish in positive degrees after a fixed number of steps. For a
constructible sheaf \(F\) with \(s=\dim\Supp F\), the shift \(F[s]\) lies in \({}^pD^{\leq0}\), and the
perverse cohomology filtration of \(\bbD(F[s])\) together with the exactness of \(\widehat{\FM}_A\) reduces
Theorem~\ref{thm:B} to this case. Thus Theorems~\ref{thm:B} and~\ref{thm:C} are consequences of
Theorem~\ref{thm:A}.

The article is organized as follows. \S\ref{sec:preliminaries} constructs the completed stalk at the
trivial character of the \(\Fl\)-linear Fourier--Mellin transform and recalls the results on
characteristic cycles and transversality used later. \S\ref{sec:chi} proves the
Euler characteristic bound for the complete intersection flags. \S\ref{sec:proofs} proves
Theorems~\ref{thm:A}, \ref{thm:B} and~\ref{thm:C} and deduces Corollary~\ref{cor:connectivity}.
In \S\ref{sec:ell-adic}, we extend the Betti number and codimension estimates to \(\ell\)-adic
coefficients and deduce bounds for torsion characters and generic vanishing.
Remark~\ref{cor:betti-torus} gives the corresponding estimates on the Betti character variety of a
complex abelian variety for finite coefficient fields.
The short Appendix~\ref{sec:integral-perverse} shows that every perverse sheaf with coefficients in a finite
extension of \(\Ql\) has a torsion-free integral form, in the terminology of Juteau \cite{Jut09}, and that
its modular reduction is perverse. These integral forms are used in \S\ref{sec:ell-adic}.

\subsection*{Acknowledgements}

We thank H\'el\`ene Esnault for her encouragement, for her comments on the article, and for her questions
on the defect of Hard Lefschetz with finite and integral coefficients, which led to the uniform form of
Theorem~\ref{thm:B} and to Remark~\ref{rem:integral-uniform}. We thank Thomas Kr\"amer for the references
in Remark~\ref{rem:katz-cc}.

\section{Preliminaries}
\label{sec:preliminaries}

We begin by fixing conventions which shall be used in the rest of the article.

\subsection{Notation and conventions}
\label{sec:conventions}

\begin{para}\label{para:conventions}
Throughout, \(k\) is an algebraically closed field and \(\ell\) is a prime invertible in \(k\). Let
\(\Lambda\) denote the coefficient ring, which unless otherwise specified is \(\Fl\). Since \(k\) is
algebraically closed, we suppress Tate twists.

For a scheme \(X\) of finite type over \(k\), let \(\Lambda_X\), or simply \(\Lambda\) when \(X\) is clear
from the context, denote the constant sheaf with value \(\Lambda\). Let \(\Dbc(X,\Lambda)\) denote the
bounded derived category of \(\Lambda\)-sheaves with constructible cohomology on the \'etale site of
\(X\), and let \(\Perv(X,\Lambda)\subseteq\Dbc(X,\Lambda)\) be the heart of the middle perverse
\(t\)-structure \cite{BBDG18}. Unless otherwise stated, cohomology of schemes of finite type over \(k\)
is \'etale cohomology.

Let \(\CH_a(X)\) denote the group of \(a\)-dimensional cycles on \(X\) with integer coefficients
modulo rational equivalence, and let \(\CH_*(X)=\bigoplus_a\CH_a(X)\).

Let \(A\) be an abelian variety over \(k\) of dimension \(g\geq1\). Unless stated otherwise, \(m\) denotes a
power \(\ell^n\) of \(\ell\), with \(n\geq0\). The multiplication map \([m]\colon A\to A\) is an \'etale
isogeny of degree \(m^{2g}\). Verdier duality on \(A\) is normalized by
\(\bbD(-)=R\mathcal{H}om_A(-,\Lambda_A[2g])\), so that \(\bbD\) preserves \(\Perv(A,\Lambda)\). Since \([m]\)
is \'etale, \([m]^*=[m]^!\) is perverse \(t\)-exact and commutes with \(\bbD\).
\end{para}

\begin{para}\label{para:transitions}
For \(F\in\Dbc(A,\Lambda)\) and powers \(m\mid m'\) of \(\ell\), pullback by \([m'/m]\) gives
\[
  \rH^{i}(A,[m]^{*}F)\longrightarrow\rH^{i}(A,[m']^{*}F).
\]
The direct limit of this system is the one in Question~\ref{question:BSS}.

Dually, the trace maps of the \'etale \([m'/m]\) induce
\(\rH^{i}(A,[m']^{*}F)\to\rH^{i}(A,[m]^{*}F)\). Verdier duality exchanges the two systems. Since \(A\)
is proper and \([m]^{*}\bbD F=\bbD[m]^{*}F\), there are natural isomorphisms
\[
  R\Gamma\bigl(A,[m]^{*}F\bigr)\ \simeq\ \RHom_{\Fl}\bigl(R\Gamma(A,[m]^{*}\bbD F),\Fl\bigr),
  \qquad
  \rH^{i}(A,[m]^{*}F)^{\vee}\ \cong\ \rH^{-i}(A,[m]^{*}\bbD F),
\]
compatible with transitions, since duality carries the unit \(\mathrm{id}\to[m'/m]_{*}[m'/m]^{*}\) to
the counit \([m'/m]_{*}[m'/m]^{!}\to\mathrm{id}\).
\end{para}

\subsection{The Fourier--Mellin transform completed at the trivial character}
\label{sec:mellin}

\begin{para}\label{para:group-algebra}
Following \cite[\S3.1]{EK21}, for a profinite abelian group \(G\), define the completed group algebra (with \(\Fl\)-coefficients) by
\[
  \Fl[[G]]\ :=\ \varprojlim_U \Fl[G/U],
\]
where \(U\) ranges over the open subgroups of \(G\). Let
\(\Tl A=\varprojlim_n A[\ell^n]\cong\Zl^{2g}\) be the \(\ell\)-adic Tate module. The subgroups
\(\ell^n\Tl A\) are cofinal among the open subgroups of \(\Tl A\), and the projection to
\(A[\ell^n]\) identifies \(\Tl A/\ell^n\Tl A\) with \(A[\ell^n]\). Thus
\[
  S\ :=\ \Fl[[\Tl A]]\ \simeq\ \varprojlim_n\ \Fl\bigl[A[\ell^n]\bigr].
\]
Choose a \(\Zl\)-basis \(\gamma_1,\dots,\gamma_{2g}\) of \(\Tl A\), and let \(\gamma_{j,m}\) denote
the image of \(\gamma_j\) in \(A[m]\). Reducing this basis modulo \(m\) gives an isomorphism
\((\bbZ/m)^{2g}\xrightarrow{\sim}A[m]\) sending the \(j\)-th standard generator to \(\gamma_{j,m}\).
For every \(m=\ell^n\), there is therefore an isomorphism
\[
  \phi_m\colon
  \Fl[x_1,\dots,x_{2g}]/(x_1^m,\dots,x_{2g}^m)
  \ \xrightarrow{\ \sim\ }\ S_m:=\Fl[A[m]],
  \qquad x_j\longmapsto[\gamma_{j,m}]-1.
\]
Indeed \([\gamma_{j,m}]^m=1\), and
\((1+x_j)^m-1=x_j^m\) because \(m\) is a power of \(\ell\). For \(m\mid m'\), let
\(q_{m',m}\) be the quotient on the left sending each \(x_j\) to \(x_j\), and let
\(\tau_{m',m}\colon S_{m'}\to S_m\) be induced by \([m'/m]\colon A[m']\to A[m]\). Since
\([m'/m](\gamma_{j,m'})=\gamma_{j,m}\), we have
\[
  \tau_{m',m}\circ\phi_{m'}\ =\ \phi_m\circ q_{m',m}.
\]
Thus the isomorphisms \(\phi_m\) are compatible with the transition maps.

Let \(\frm=(x_1,\dots,x_{2g})\) and \(I_m=(x_1^m,\dots,x_{2g}^m)\). The inclusions
\[
  \frm^{2g(m-1)+1}\ \subseteq\ I_m\ \subseteq\ \frm^m
\]
show that the ideals \(I_m\) are cofinal with the powers of \(\frm\). Passing to inverse limits in the
compatible isomorphisms above gives a topological isomorphism
\[
  \Fl[[x_1,\dots,x_{2g}]]\ \xrightarrow{\ \sim\ }\ S,
  \qquad 1+x_j\longmapsto[\gamma_j],
\]
under which \(S\to S_m\) is the quotient by \(I_m\). In particular, \(S\) is a complete regular local
ring of dimension \(2g\) with maximal ideal \(\frm\) and residue field \(\Fl\).
\end{para}

\begin{para}\label{para:tautological}
We take the identity as base point and let \(\pi_1(A)=\pi_1(A,0)\). For \(m=\ell^{n}\), the
multiplication map \([m]\colon A\to A\) is an \(A[m]\)-torsor for the translation action, hence a
connected Galois cover classified by a surjection \(\pi_1(A)\twoheadrightarrow A[m]\)
\cite[Tag~\href{https://stacks.math.columbia.edu/tag/03SF}{03SF}]{stacks-project}.
These surjections are compatible with the maps \([m'/m]\colon A[m']\to A[m]\) and induce a surjection
\(\pi_1(A)\to\Tl A\).
For \(v\in\Tl A\) with images \(v_m\in A[m]\), the basis elements \([v_m]\in S_m\) are compatible under
the maps \(\tau_{m',m}\) of (\ref{para:group-algebra}) and define a unit \([v]\in S\) with inverse
\([-v]\). The continuous tautological character is
\[
  \rho\colon\pi_1(A,0)\twoheadrightarrow\Tl A\xrightarrow{\ v\mapsto[v]\ }S^{\times},
\]
the canonical character of \cite[\S4.1]{EK21} for \(\pi=\Tl A\).

For every Artinian quotient \(S'\) of \(S\), let \(L_{S'}\) be the rank one \(S'\)-local system
attached to the character induced by \(\rho\). A further quotient \(S'\twoheadrightarrow S''\)
induces a transition map \(L_{S'}\to L_{S''}\) and an isomorphism
\(L_{S'}\otimes_{S'}S''\simeq L_{S''}\). Let \(\Ldot=(L_{S'})_{S'}\) be this projective system.
Its restriction to \(S/\frm^{N}\), for \(N\geq1\), is an \(\frm\)-adic projective system with lisse
terms \cite[Expos\'e~V, D\'efinition~3.1.1]{SGA5}, and defines an object of the heart of the
\(t\)-structure on adic complexes of \cite[Theorem~6.3(i)]{Eke90}.

For \(F\in\Dbc(A,\Lambda)\), let \(F\otimes_{\Lambda}\Ldot\) denote the constructible adic complex
\((F\otimes_{\Lambda}L_{S/\frm^{N}})_{N\geq1}\). Following \cite[\S4.1]{EK21}, define
\[
  \widehat{\FM}_A(F)\ :=\
  R\varprojlim_{N\geq1}R\Gamma\bigl(A,F\otimes_{\Lambda}L_{S/\frm^{N}}\bigr)\ \in\ D(S).
\]
This is the completed stalk at the trivial character of the \(\Fl\)-linear Fourier--Mellin transform
of \(F\).

The first lemma below identifies the complexes
\(R\Gamma(A,F\otimes_{\Lambda}L_{S_m})\) with the tower of Question~\ref{question:BSS}, and the second
proves the finiteness of \(\widehat{\FM}_A(F)\) and its base change to the Artinian quotients of \(S\).
\end{para}

\begin{lem}\label{lem:finite-level}
For every \(F\in\Dbc(A,\Lambda)\) there are isomorphisms
\[
  R\Gamma\bigl(A,F\otimes_{\Lambda}L_{S_m}\bigr)\ \simeq\ R\Gamma\bigl(A,[m]^{*}F\bigr)
\]
compatible with the transition maps, those induced by \(\Ldot\) on the left and the trace maps of
(\ref{para:transitions}) on the right.
\end{lem}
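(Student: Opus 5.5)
The plan is to identify the local system \(L_{S_m}\) with the pushforward \([m]_*\Lambda_A\), and then use the projection formula. Since \([m]\colon A\to A\) is a connected Galois \(A[m]\)-torsor classified by \(\pi_1(A)\twoheadrightarrow A[m]\), the sheaf \([m]_*\Lambda_A\) is lisse. Its stalk at \(0\) is \(\Lambda[A[m]]=S_m\), the functions on the fibre \(A[m]\). As a \(\pi_1(A)\)-module it is the regular representation \(\Fl[A[m]]\), with \(\pi_1\) acting through \(A[m]\) by translation.

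\emph{Step 1.} I will check that the character \(\pi_1(A)\to\Tl A\to S_m^\times\), \(v\mapsto[v_m]\), viewed as a rank one free \(S_m\)-module, is exactly this regular representation. This comes down to the following point: \(S_m\) acting on itself by multiplication by \([v_m]\) is the left regular action of \(A[m]\) on \(\Fl[A[m]]\).

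\emph{Step 2.} I will fix the sign and convention issues. Depending on whether one uses functions or measures on the fibre, one may need the inverse character or an identification of \(\Fl[A[m]]\) with its dual via the basis of points. Since \(A[m]\) is abelian, \(v\mapsto -v\) is an automorphism, so any such discrepancy is absorbed by a canonical isomorphism. This gives \(L_{S_m}\simeq[m]_*\Lambda_A\) as \(S_m\)-local systems.

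\emph{Step 3.} The projection formula for the finite map \([m]\) gives
\[
F\otimes L_{S_m}\simeq F\otimes[m]_*\Lambda\simeq[m]_*[m]^*F,
\]
and hence \(R\Gamma(A,F\otimes L_{S_m})\simeq R\Gamma(A,[m]^*F)\), because \([m]_*\) is exact.

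\emph{Step 4.} The main obstacle is compatibility with transitions. For \(m\mid m'\) with \(d=m'/m\), the transition \(L_{S_{m'}}\to L_{S_m}\) is induced by \(\tau_{m',m}\colon S_{m'}\to S_m\), which pushes basis elements forward along \([d]\colon A[m']\to A[m]\). Under Steps 1 and 2, this is the map \([m']_*\Lambda=[m]_*[d]_*\Lambda\to[m]_*\Lambda\) that sums over the fibres of \([d]\). That map is \([m]_*\) of the trace \([d]_*[d]^*\Lambda=[d]_*[d]^!\Lambda\to\Lambda\) for the finite étale map \([d]\).

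To make this precise I will compare stalks at \(0\): \(\tau_{m',m}\) sends \([x]\mapsto[dx]\), and the trace sends a function supported at a point \(x\) to the same value at \(dx\), so they agree. Both maps are morphisms of lisse sheaves, hence determined by their effect on the fibre at \(0\) as \(\pi_1\)-equivariant maps. The projection formula is natural for the trace, so tensoring with \(F\) and taking \(R\Gamma\) gives the trace map \(R\Gamma(A,[m']^*F)\to R\Gamma(A,[m]^*F)\) of (\ref{para:transitions}).

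The care needed is in choosing the identification in Step 2 (measures rather than functions, so that pushforward of basis elements matches the trace rather than pullback). I would fix it by realizing \(L_{S_m}\) as \([m]_!\Lambda\), with stalk \(\bigoplus_{x\in A[m]}\Lambda\cdot x\), and \(\tau\) as the adjunction trace.
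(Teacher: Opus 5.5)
Your proposal is correct and follows the paper's proof. Like the paper, you identify \(L_{S_m}\) with \([m]_{*}\Lambda_A\) through the regular representation of \(A[m]\), match \(\tau_{m',m}\) with \([m]_{*}\) of the trace of \([m'/m]\), and conclude with the projection formula for the finite map \([m]\); your stalk-level check at \(0\) and your discussion of conventions only spell out what the paper asserts in one line.
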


\begin{proof}
By (\ref{para:tautological}), \(L_{S_m}\) is the local system attached to the representation of
\(\pi_1(A)\) on \(S_m=\Lambda[A[m]]\) through the surjection \(\pi_1(A)\twoheadrightarrow A[m]\) and
the regular representation of \(A[m]\). Since \([m]\) is finite Galois with group \(A[m]\), acting by
translation on the source, \([m]_{*}\Lambda_A\) is the local system attached to the same representation
\cite[Tags~\href{https://stacks.math.columbia.edu/tag/03RV}{03RV}
and~\href{https://stacks.math.columbia.edu/tag/0DV5}{0DV5}]{stacks-project}, so that
\([m]_{*}\Lambda_A\simeq L_{S_m}\) as \(S_m\)-local systems. Moreover this is compatible with
the transition maps, the quotient \(S_{m'}\to S_m\) corresponding to \([m]_{*}\) of the trace
\([m'/m]_{*}\Lambda_A\to\Lambda_A\), both being induced by the quotient \(A[m']\to A[m]\).

The projection formula for the finite map \([m]\) then gives
\[
  F\otimes_{\Lambda}L_{S_m}
  \ \simeq\ F\otimes_{\Lambda}[m]_{*}\Lambda_A
  \ \simeq\ [m]_{*}\bigl([m]^{*}F\bigr),
\]
and applying \(R\Gamma(A,-)\), with \(R\Gamma(A,[m]_{*}Q)\simeq R\Gamma(A,Q)\) for the finite morphism
\([m]\), gives the isomorphism. The transition for \(m\mid m'\) is induced by the trace of
\([m'/m]\), which is the trace map of (\ref{para:transitions}).
\end{proof}

\begin{lem}\label{lem:FM-finiteness}
Let \(F\in\Dbc(A,\Lambda)\). The complex \(\widehat{\FM}_A(F)\) is perfect over \(S\), and for every
Artinian quotient \(S'\) of \(S\) there is a natural isomorphism
\[
  \widehat{\FM}_A(F)\otimes^{L}_{S}S'\ \simeq\ R\Gamma\bigl(A,F\otimes_{\Lambda}L_{S'}\bigr),
\]
compatible with the transition maps of \(\Ldot\) of (\ref{para:tautological}) as \(S'\) varies.%
\footnote{See also \cite[Proposition~4.1(1)--(2)]{EK21} for the same properties with integral
coefficients.}
\end{lem}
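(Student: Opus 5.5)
The plan is to prove the statement first at each finite level, via the projection formula, and then pass to the limit using the standard criterion for an inverse system of perfect complexes. Write \(K_N:=R\Gamma(A,F\otimes_\Lambda L_{S/\frm^N})\).

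\emph{Step 1: a finite-level base-change isomorphism.} Let \(S'\twoheadrightarrow S''\) be Artinian quotients of \(S\). We aim for a natural isomorphism
\[
  R\Gamma(A,F\otimes_\Lambda L_{S'})\otimes^L_{S'}S''\ \simeq\ R\Gamma(A,F\otimes_\Lambda L_{S''}).
\]
Since \(L_{S'}\) is a lisse sheaf of free rank one \(S'\)-modules, the complex \(F\otimes_\Lambda L_{S'}\) is a complex of sheaves of \(S'\)-modules. The sheaf-level statement \((F\otimes_\Lambda L_{S'})\otimes^L_{S'}S''\simeq F\otimes_\Lambda L_{S''}\) holds because \(L_{S'}\) is \(S'\)-flat and \(L_{S'}\otimes_{S'}S''\simeq L_{S''}\). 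The projection formula for \(R\Gamma\) on the proper variety \(A\) then moves \(\otimes^L_{S'}S''\) outside \(R\Gamma\). This uses that \(R\Gamma\) has finite cohomological dimension and commutes with tensoring by a bounded-above complex of finite free \(S'\)-modules resolving \(S''\).

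\emph{Step 2: each finite level is perfect.} Next we show \(R\Gamma(A,F\otimes_\Lambda L_{S'})\) is a perfect \(S'\)-complex. Its cohomology is finite over \(\Fl\), since \(F\otimes L_{S'}\) is constructible with finite coefficients on a proper variety. It is bounded above, and it has finite Tor amplitude over \(S'\): by Step 1 with \(S''=\Fl\), the derived fibre is \(R\Gamma(A,F)\), which is bounded in an amplitude independent of \(S'\). Over a local ring, a pseudo-coherent complex with bounded derived fibre at the residue field is perfect, with Tor amplitude in that range.

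\emph{Step 3: pass to the limit.} The \(K_N\) form an inverse system of perfect \(S/\frm^N\)-complexes with \(K_{N+1}\otimes^L S/\frm^N\simeq K_N\), and their Tor amplitude is bounded uniformly in \(N\), namely by the amplitude of \(R\Gamma(A,F)\). For a noetherian ring complete with respect to \(\frm\), \(R\varprojlim K_N\) is then perfect and recovers each \(K_N\) after base change; this is the standard result, e.g. Stacks Tag 0CQG / 09AV, or Ekedahl's formalism of \cite{Eke90}. Concretely, one chooses minimal free resolutions compatibly, lifting differentials inductively by Nakayama, and takes the inverse limit, which consists of finite free \(S\)-modules. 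This gives perfectness of \(\widehat{\FM}_A(F)\) and the isomorphism for \(S'=S/\frm^N\).

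\emph{Step 4: a general Artinian quotient.} Any Artinian quotient \(S'\) of \(S\) is a quotient of some \(S/\frm^N\). So
\[
  \widehat{\FM}_A(F)\otimes^L_S S'=(\widehat{\FM}_A(F)\otimes^L_S S/\frm^N)\otimes^L_{S/\frm^N}S',
\]
and Step 1 applied to \(S/\frm^N\twoheadrightarrow S'\) finishes. The result does not depend on \(N\), by the compatibility in Step 1, and naturality under the transition maps of \(\Ldot\) follows from the naturality of the projection-formula isomorphisms.

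The main obstacle is Step 3: making the limit argument precise, and in particular ensuring that the \(\varprojlim^1\) terms vanish and that the inductive lifting of resolutions is compatible. I would handle this by invoking the Stacks Project criterion for derived complete pseudo-coherent complexes, rather than building the resolutions by hand.
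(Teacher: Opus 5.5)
Your proof is correct. It shares its outer layer with the paper but handles the passage to the limit differently.

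\emph{What matches.} Your Steps~1 and~4 are exactly the paper's second paragraph: the projection formula for the proper map \(A\to\Spec k\), applied to \(S/\frm^{N}\twoheadrightarrow S'\).

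\emph{What differs.} The paper does not argue level by level. It invokes Ekedahl's formalism of adic complexes. By \cite[Theorem~6.3(iii)]{Eke90}, \(R\Gamma(A,F\otimes_{\Lambda}\Ldot)\) is a single bounded constructible adic complex whose reductions are your \(K_N\). By \cite[Theorems~7.1(i) and~7.2(i)]{Eke90}, such objects over the point correspond to \(D^{b}_{\mathrm{coh}}(S)\) when \(S\) is complete. Perfectness is then read off from the regularity of \(S\). You instead prove that each \(K_N\) is perfect, with Tor amplitude controlled uniformly by the residue fibre \(R\Gamma(A,F)\). You then apply the commutative-algebra lemma on \(R\varprojlim\) of a compatible system of perfect complexes over a complete noetherian ring.

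\emph{What each route buys.} Your route is more self-contained. It never uses regularity of \(S\), so it would work verbatim over non-regular complete coefficient rings. It also yields directly the Tor-amplitude bound that the paper extracts afterwards in \ref{item:M2}. The paper's route is shorter. Because it works inside the category of adic complexes, the inverse system \((K_N)\) comes as one honest object. You instead need the lifting of compatible minimal resolutions sketched in your Step~3. That lifting is standard and, as you indicate, can be cited rather than redone by hand.
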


\begin{proof}
Since \(S\) is regular, every complex in \(D^{b}_{\mathrm{coh}}(S)\) is perfect. By
\cite[Theorem~6.3(iii)]{Eke90}, the direct image \(R\Gamma(A,F\otimes_{\Lambda}\Ldot)\) is a bounded
constructible adic complex over the point, and its reduction modulo \(\frm^{N}\) is
\(R\Gamma(A,F\otimes_{\Lambda}L_{S/\frm^{N}})\). Since \(S\) is complete, the equivalences of
\cite[Theorems~7.1(i) and~7.2(i)]{Eke90} give \(\widehat{\FM}_A(F)\in D^{b}_{\mathrm{coh}}(S)\) and,
compatibly as \(N\) varies, natural isomorphisms
\[
  \widehat{\FM}_A(F)\otimes^{L}_{S}S/\frm^{N}\ \simeq\ R\Gamma(A,F\otimes_{\Lambda}L_{S/\frm^{N}}).
\]

For an Artinian quotient \(S'\), choose \(N\geq1\) with \(\frm^{N} S'=0\) and let \(B=S/\frm^{N}\).
Then \(S'\) is a \(B\)-algebra and \(L_B\otimes^{L}_{B} S'\simeq L_{S'}\). The projection formula for
the proper morphism \(A\to\Spec k\) \cite[Expos\'e~XVII, Proposition~5.2.9]{ArtinGrothendieckSGA4} gives
\[
\begin{aligned}
  R\Gamma(A,F\otimes_{\Lambda}L_B)\otimes^{L}_{B} S'
  &\simeq R\Gamma(A,F\otimes_{\Lambda}L_B\otimes^{L}_{B} S')\\
  &\simeq R\Gamma(A,F\otimes_{\Lambda}L_{S'}).
\end{aligned}
\]
Combining this with the isomorphism for \(B\) proves the statement for \(S'\). Naturality gives
compatibility as \(S'\) varies.
\end{proof}

We collect the consequences needed below.

\begin{enumerate}[label=\textup{(M\arabic*)}, ref=\textup{(M\arabic*)}]

\item\label{item:M1} For every \(F\in\Dbc(A,\Lambda)\), the complex \(\widehat{\FM}_A(F)\) is perfect over
\(S\). For every \(m=\ell^n\), there is a natural isomorphism
\[
  \widehat{\FM}_A(F)\otimes^L_S S_m\ \simeq\ R\Gamma\bigl(A,[m]^{*}F\bigr),
\]
under which, for \(m\mid m'\), the transition map on the right is the trace of \([m'/m]\). This follows
from Lemmas~\ref{lem:finite-level} and~\ref{lem:FM-finiteness}.

\item\label{item:M2} For integers \(a\leq b\), a perfect complex \(C\) over \(S\) with
\(C\otimes^{L}_{S}\Fl\in D^{[a,b]}\) has Tor-amplitude in \([a,b]\). It therefore has a representative
by finite free \(S\)-modules concentrated in these degrees
\cite[Tags~\href{https://stacks.math.columbia.edu/tag/068V}{068V}
and~\href{https://stacks.math.columbia.edu/tag/0658}{0658}]{stacks-project}. Thus
\(C\in D^{[a,b]}(S)\) and \(\RHom_S(C,S)\in D^{[-b,-a]}(S)\).

If \(P\) is perverse, then \(R\Gamma(A,P)\in D^{[-g,g]}\), since \(A\) is proper of dimension \(g\)
\cite[4.2.4]{BBDG18}. By \ref{item:M1} at \(m=1\),
\(\widehat{\FM}_A(P)\) is perfect and \(\widehat{\FM}_A(P)\otimes^{L}_{S}\Fl\simeq R\Gamma(A,P)\). Applying
the preceding argument with \(C=\widehat{\FM}_A(P)\) and \([a,b]=[-g,g]\) shows that \(\widehat{\FM}_A(P)\)
and \(\RHom_S(\widehat{\FM}_A(P),S)\) lie in \(D^{[-g,g]}(S)\).

\item\label{item:M3} The ring \(S\) is regular, hence Gorenstein with dualizing complex \(S[2g]\).
For every perfect complex \(Q\), \cite[Lemma~2.8 and Remark~2.9]{BhattSchnellScholze2018} gives%
\footnote{The support of the zero module has codimension \(+\infty\).}
\[
  \RHom_S(Q,S)\in D^{\geq0}(S)
  \quad\Longleftrightarrow\quad
  \codim\Supp\rH^{i}(Q)\ \geq\ i\quad\text{for every }i.
\]
Equivalently, for every perfect complex \(K\),
\[
  K\in D^{\geq0}(S)
  \quad\Longleftrightarrow\quad
  \codim\Supp\rH^{i}\bigl(\RHom_S(K,S)\bigr)\ \geq\ i
  \quad\text{for every }i.
\]

\end{enumerate}

The following lemma expresses vanishing along the multiplication tower in terms of the completed
Fourier--Mellin transform. We use Verdier duality at each finite level, as in the proof of
\cite[Theorem~4.1]{Bha26} for compact complex tori.%
\footnote{Bhatt uses \cite[Lemma~4.2(3)]{Bha26} to identify the direct limit
\(\varinjlim_{n}R\Gamma(A,[\ell^{n}]^{*}F)\), taken with pullback transition maps, with local
cohomology of the Grothendieck dual of the completed transform. For the vanishing criterion here, it
suffices to compute the \(\Fl\)-linear dual of this direct limit, as in \eqref{eq:colimit-dual}.}

\begin{lem}\label{lem:tower-vanishing}
Let \(F\in\Dbc(A,\Lambda)\) and let \(K=\widehat{\FM}_A(\bbD F)\). There is a natural isomorphism in
\(D(\Fl)\)
\begin{equation}\label{eq:colimit-dual}
  K\ \simeq\ \RHom_{\Fl}\Bigl(\varinjlim_{n}R\Gamma\bigl(A,[\ell^{n}]^{*}F\bigr),\ \Fl\Bigr),
\end{equation}
where the transition maps are the pullback maps. Moreover, the following are equivalent.
\begin{enumerate}
\item \(K\in D^{\geq0}(S)\).
\item \(\varinjlim_n R\Gamma\bigl(A,[\ell^n]^{*}F\bigr)\in D^{\leq0}(\Fl)\).
\item There is an integer \(e\geq0\) such that the pullback map
\[
  [\ell^{e}]^{*}\colon\rH^{i}\bigl(A,[\ell^{n}]^{*}F\bigr)\longrightarrow\rH^{i}\bigl(A,[\ell^{n+e}]^{*}F\bigr)
\]
is zero for all \(n\geq0\) and all \(i>0\).
\end{enumerate}
\end{lem}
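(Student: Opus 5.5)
The plan is to build the isomorphism \eqref{eq:colimit-dual} first, and then prove the three equivalences by treating \(K\) as an inverse limit of its finite-level truncations.

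\emph{The isomorphism.} By \ref{item:M1} applied to \(\bbD F\), we have \(K\otimes^L_S S_m\simeq R\Gamma(A,[m]^*\bbD F)\), with trace transition maps. By (\ref{para:transitions}), the right side is \(\RHom_{\Fl}(R\Gamma(A,[m]^*F),\Fl)\), and duality carries the trace maps to the pullback maps. Since \(K\) is perfect over the complete local ring \(S\) and the \(I_m\) are cofinal with \(\frm^N\), we get \(K\simeq R\varprojlim_m K\otimes^L_S S_m\). To see this, take a finite free representative \(K^\bullet\); then \(K\otimes^L_SS_m=K^\bullet/I_mK^\bullet\) and the system is termwise surjective, so \(R\varprojlim=\varprojlim\) and this recovers \(K^\bullet\) by completeness. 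Finally, \(\RHom_{\Fl}(-,\Fl)\) converts \(\varinjlim\) into \(R\varprojlim\). This holds on the nose, because \(\Hom_{\Fl}(-,\Fl)\) is exact and turns colimits of complexes into limits of complexes. Choosing the identifications compatibly at the level of complexes, for instance via \(K^\bullet/I_m\) and its linear dual, gives \eqref{eq:colimit-dual}.

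\emph{The equivalences.} Write \(M=\varinjlim_n R\Gamma(A,[\ell^n]^*F)\).

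For (1)\(\Leftrightarrow\)(2): the cohomology of \(M\) is \(\varinjlim_n\rH^i\), and \(\rH^{-i}(K)\cong(\rH^i M)^\vee\) by exactness of the linear dual. Hence \(K\in D^{\geq0}\) iff \(\rH^i M=0\) for \(i>0\). Here \(D^{\ge0}(S)\) is tested on cohomology modules, which are the same as those computed in \(D(\Fl)\).

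For (3)\(\Rightarrow\)(2): this is immediate, since every class in degree \(i>0\) dies after \(e\) steps.

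For (1)\(\Rightarrow\)(3), which is the main point: by \ref{item:M2} and (1), the perfect complex \(K\) has cohomology in \([0,g]\). Replacing \(K^\bullet\) by the truncation \(\tau^{\ge0}\) is delicate because truncation need not stay free. I would instead keep a finite free representative \(K^\bullet\) in degrees \([-g,g]\) with \(\rH^{j}(K^\bullet)=0\) for \(j<0\). Dually, (3) asks that the trace-transition map
\[
\rH^{-i}(K^\bullet/I_{m'}K^\bullet)\to\rH^{-i}(K^\bullet/I_mK^\bullet),\qquad m'=\ell^e m,
\]
be zero for \(i>0\). Take a cocycle \(\bar z\in K^{-i}/I_{m'}K^{-i}\), lift it to \(z\in K^{-i}\), so that \(dz\in I_{m'}K^{-i+1}\). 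Apply Artin--Rees to the submodule \(B=d(K^{-i})\subseteq K^{-i+1}\): there is a constant \(c\) with \(\frm^{N}K^{-i+1}\cap B\subseteq\frm^{N-c}B\). Since \(I_{m'}\subseteq\frm^{m'}\), we get \(dz=dw\) with \(w\in\frm^{m'-c}K^{-i}\). Then \(z-w\) is a cocycle in \(K^{-i}\), and since \(\rH^{-i}(K)=0\) it is a coboundary \(dy\). Reducing mod \(I_m\) gives \(\bar z\equiv \bar w+d\bar y\). The remaining requirement is \(\frm^{m'-c}\subseteq I_m\), and by the inclusion \(\frm^{2g(m-1)+1}\subseteq I_m\) it suffices that \(\ell^e m-c\geq 2g(m-1)+1\), uniformly in \(m\). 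This holds once \(\ell^e\ge 2g+c+1\), matching the constant announced after Theorem~\ref{thm:B}. Taking \(c\) as the maximum over the finitely many degrees, dualizing back gives vanishing of the pullback maps \(\rH^i\to\rH^{i}\) for \(i>0\).

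\emph{Main obstacle.} I expect the difficulty to lie in the compatibility bookkeeping in the first step: making sure that the finite-level duality isomorphisms, the trace/pullback exchange, and the identification with \(K^\bullet/I_m\) are compatible as strict systems of complexes. Only then is the limit/colimit exchange legitimate, rather than merely levelwise in the derived category. If this is hard, I would avoid \(R\varprojlim\) subtleties by proving the equivalences directly on cohomology, using Mittag-Leffler for the finite-dimensional systems \(\rH^{-i}(K^\bullet/I_m)\).
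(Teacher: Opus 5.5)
Your proposal is correct and follows essentially the paper's proof. The matching steps are: the identification of \(K\) with the limit of the linear duals of \(R\Gamma(A,[\ell^n]^*F)\); the Mittag--Leffler check on cohomology that you offer as a fallback, which is exactly how the paper verifies \eqref{eq:colimit-dual}; duality for (1)\(\Leftrightarrow\)(2); and the same Artin--Rees argument on a bounded finite free representative, with \(\ell^{e}\geq2g+c+1\), for (1)\(\Rightarrow\)(3). One small slip: for general \(F\in\Dbc(A,\Lambda)\) the representative need not live in degrees \([-g,g]\), since that range comes from \ref{item:M2} for perverse sheaves. Only boundedness is used, so the argument is unaffected.
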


\begin{proof}
Let \(C_n=R\Gamma(A,[\ell^n]^{*}F)\) and \(C=\varinjlim_n C_n\). The definition of \(K\), cofinality in
(\ref{para:group-algebra}), Lemma~\ref{lem:finite-level} and the duality of (\ref{para:transitions}) give
\[
  K\ \simeq\ R\varprojlim_n\RHom_{\Fl}(C_n,\Fl),
\]
with the trace maps corresponding to the duals of the pullback maps. For every integer \(a\), the
inverse system \((\rH^{-a}(C_n)^{\vee})_n\) consists of finite-dimensional vector spaces, so it
satisfies the Mittag--Leffler condition and has vanishing \(\varprojlim^{1}\).
Since filtered colimits and \(\Fl\)-linear duality are exact, the natural comparison map
\[
  \RHom_{\Fl}(C,\Fl)\ \longrightarrow\ K
\]
induces, for every \(a\), the isomorphisms
\[
  \bigl(\varinjlim_n\rH^{-a}(C_n)\bigr)^{\vee}
  \ \simeq\ \varprojlim_n\rH^{-a}(C_n)^{\vee}
  \ \simeq\ \rH^a(K).
\]
This proves the isomorphism in \eqref{eq:colimit-dual}.

A vector space vanishes if and only if its dual vanishes, so
\(C\in D^{\leq0}(\Fl)\) if and only if \(K\in D^{\geq0}(\Fl)\). The latter is equivalent to
\(K\in D^{\geq0}(S)\). This proves that (1) and (2) are equivalent.

Since filtered colimits are exact, \(\rH^{i}(C)=\varinjlim_n\rH^{i}(C_n)\), and (3) implies (2).

Finally we assume (1) and prove (3). By \ref{item:M1} at \(m=1\) and \ref{item:M2}, \(K\) is represented by a bounded complex
\(D^{\bullet}\) of finite free \(S\)-modules, which by (1) is exact in every degree \(j<0\). Let
\(B^{j+1}:=d(D^{j})\subseteq D^{j+1}\). Since \(D^{\bullet}\) is bounded, the Artin--Rees lemma
\cite[Tag~\href{https://stacks.math.columbia.edu/tag/00IN}{00IN}]{stacks-project} gives an integer
\(c\geq0\) such that
\[
  B^{j+1}\cap\frm^{r}D^{j+1}\ \subseteq\ \frm^{r-c}B^{j+1}
  \qquad\text{for all } r\geq c \text{ and all } j<0.
\]
Choose \(e\geq0\) with \(\ell^{e}\geq2g+c+1\). We claim
\[
  [\ell^{e}]^{*}\colon\rH^{i}\bigl(A,[\ell^{n}]^{*}F\bigr)\longrightarrow\rH^{i}\bigl(A,[\ell^{n+e}]^{*}F\bigr)
\]
is zero for all \(n\geq0\) and all \(i>0\).

Let \(m=\ell^{n}\) and \(m'=\ell^{n+e}\). For a finite
free \(S\)-module \(D\) one has \(D\otimes_{S}S_m=D/I_mD\), so \(K\otimes^{L}_{S}S_m\) is computed by
\(D^{\bullet}/I_mD^{\bullet}\), and the map \(K\otimes^{L}_{S}S_{m'}\to K\otimes^{L}_{S}S_m\) induced by
the quotient \(S_{m'}\to S_m\) by the projection
\(D^{\bullet}/I_{m'}D^{\bullet}\to D^{\bullet}/I_mD^{\bullet}\).

Let \(j<0\) and let \(z\in D^{j}\) represent a class in \(\rH^{j}(D^{\bullet}/I_{m'}D^{\bullet})\), so that
\(dz\in I_{m'}D^{j+1}\subseteq\frm^{m'}D^{j+1}\) by (\ref{para:group-algebra}). Then
\(dz\in B^{j+1}\cap\frm^{m'}D^{j+1}\subseteq\frm^{m'-c}B^{j+1}\), so \(dz=dw\) for some
\(w\in\frm^{m'-c}D^{j}\). Thus \(z-w\) is a boundary, since \(D^{\bullet}\) is exact in degree \(j\).
Since \(m'-c\geq(2g+c+1)m-c\geq2g(m-1)+1\), (\ref{para:group-algebra}) gives \(\frm^{m'-c}\subseteq I_m\),
so \(w\in I_mD^{j}\). Hence the image of \(z\) in \(D^{j}/I_mD^{j}\) is a boundary, and the map
\(\rH^{j}(K\otimes^{L}_{S}S_{m'})\to\rH^{j}(K\otimes^{L}_{S}S_m)\) is zero for every \(j<0\). By
\ref{item:M1}, this map is the trace map \(\rH^{j}(A,[m']^{*}\bbD F)\to\rH^{j}(A,[m]^{*}\bbD F)\), and by
(\ref{para:transitions}) the pullback map \(\rH^{-j}(A,[m]^{*}F)\to\rH^{-j}(A,[m']^{*}F)\) is its
\(\Fl\)-linear dual, hence also zero.
\end{proof}

\subsection{Characteristic cycles}
\label{sec:ss-cc}

We recall the necessary facts from Saito's construction of characteristic cycles \cite{Sai17}.

\begin{enumerate}[label=\textup{(S\arabic*)}, ref=\textup{(S\arabic*)}]

\item\label{item:S2}\label{item:S4}\label{item:S3}
Let \(X\) be a smooth variety of dimension \(d\) over \(k\) and \(F\in\Dbc(X,\Lambda)\). Saito defined
the characteristic cycle \(\CC(F)\), a \(d\)-cycle on \(T^*X\) with integer coefficients and
conical support \cite[Theorems~5.9 and~5.18]{Sai17}. Moreover, if \(X\) is projective, the index formula
\cite[Theorem~7.13]{Sai17} gives
\[
  \chi(X,F)\ =\ \bigl(\CC(F),T_X^*X\bigr)_{T^*X},
\]
where \(T_X^*X\) is the zero section.

The characteristic cycle is additive in triangles, hence in short exact sequences of perverse
sheaves, and \(\CC(F[1])=-\CC(F)\) \cite[Lemma~5.13.1]{Sai17}.

\item\label{item:S-effectivity} For \(P\in\Perv(X,\Lambda)\),
\cite[Proposition~5.14]{Sai17} shows that the cycle \(\CC(P)\) is effective and its support equals
Beilinson's singular support \(\SSupp(P)\) \cite{B}.

\item\label{item:S5} Let \(C\subseteq T^*X\) be closed and conical, with all irreducible components
of dimension \(d\), and let \(i\colon W\hookrightarrow X\) be a closed immersion with \(W\) smooth.
The immersion is \emph{\(C\)-transversal} if \((W\times_X C)\cap N^*_{W/X}\) is contained in the
zero section, where \(N^*_{W/X}=\ker(T^*X|_W\to T^*W)\) is the conormal bundle.

It is \emph{properly \(C\)-transversal} if, in addition, every irreducible component of
\(W\times_X C\) has dimension \(\dim W\) \cite[Definition~7.1]{Sai17}.

\item\label{item:S7} Let \(P\in\Perv(X,\Lambda)\), and let \(i\colon W\hookrightarrow X\) be a
\(\SSupp(P)\)-transversal closed immersion of codimension \(c\), with \(W\) smooth. By
\cite[Proposition~8.13.1 and Lemma~8.6.5]{Sai17}, we have \(Ri^!P\simeq i^*P[-2c]\), and
\(i^*P[-c]\) and \(Ri^!P[c]\) are perverse.\footnote{The purity isomorphism
\(Ri^!\Lambda\simeq\Lambda[-2c]\), required in Saito's Lemma~8.6.5, follows from
\cite[Expos\'e~XVI, Th\'eor\`eme~3.7 and Remarques~3.10]{ArtinGrothendieckSGA4}.}

\item\label{item:S8}
Let \(p\colon\ol{T^*X}:=\bbP(T^*X\oplus\cO_X)\to X\) be the bundle of lines, and let
\(\xi=c_1(\cO(1))\), where \(\cO(1)\) is the dual of the universal subbundle. Let
\(\ol{\CC(F)}\) denote the closure of the characteristic cycle in \(\ol{T^*X}\). The projective bundle
formula defines unique classes \(\operatorname{cc}_{X,a}(F)\in\CH_a(X)\) by
\[
  [\ol{\CC(F)}]\ =\ \sum_{a=0}^d p^*\operatorname{cc}_{X,a}(F)\,\xi^a
  \quad\text{in }\CH_d(\ol{T^*X}).
\]
Their sum \(\operatorname{cc}_X(F):= \sum_{a=0}^{d}\operatorname{cc}_{X,a}(F) \in \CH_{*}(X)\) is the
\emph{characteristic class} \cite[(6.12) and Definition~6.7]{Sai17}.

The characteristic class is additive in \(\CC(F)\), so
\(\operatorname{cc}_X(F[1])=-\operatorname{cc}_X(F)\) by \ref{item:S3}.
By \cite[Lemma~6.9.1]{Sai17}, its component in \(\CH_0(X)\) is the intersection class of \(\CC(F)\)
with the zero section. Thus, when \(X\) is projective, \ref{item:S4} gives
\[
  \chi(X,F)\ =\ \deg\operatorname{cc}_{X,0}(F).
\]

\item\label{item:S9}
Let \(P\in\Perv(X,\Lambda)\), and let \(i\colon D\hookrightarrow X\) be the inclusion of a smooth
divisor. Assume that \(i\) is properly \(\SSupp(P)\)-transversal.

Let \(i^!\colon\CH_a(X)\to\CH_{a-1}(D)\) be the Gysin homomorphism of the divisor
\cite[\S6.2]{Ful98}, and let \(N_{D/X}\simeq\cO_X(D)|_D\) be its normal bundle. The conormal line bundle
has total Chern class \(c(N_{D/X}^*)=1-c_1(N_{D/X})\). By \cite[Proposition~7.8]{Sai17} and
\ref{item:S3},
\[
  \operatorname{cc}_D(i^*P[-1])
  \ =\ \bigl(1-c_1(N_{D/X})\bigr)^{-1}\cap i^!\operatorname{cc}_X(P).
\]
The first Chern class operator \(c_1(N_{D/X})\cap-\) lowers cycle dimension by one, so the inverse
acts on \(\CH_\bullet(D)\) as the finite sum \(\sum_{j=0}^{\dim D}c_1(N_{D/X})^j\cap-\).
For an \'etale morphism \(f\colon W\to X\), the relative cotangent bundle is zero, so
\cite[Corollary~7.9]{Sai17} gives
\[
  \operatorname{cc}_W(f^*F)\ =\ f^*\operatorname{cc}_X(F),
\]
where \(f^*\) is the flat pullback of cycle classes.

\end{enumerate}

\begin{lem}\label{lem:bertini}
Let \(X\) be a smooth projective variety over \(k\) and \(H\) a very ample divisor on \(X\). Let
\(Y\subseteq X\) be a smooth projective subvariety of dimension \(r\geq1\), and let
\(C\subseteq T^{*}Y\) be a closed conical subset all of whose irreducible components have dimension
\(r\). Then, for every member \(Z\) of a dense open subset of the projective space \(|H|\), the
intersection \(Z\cap Y\) is smooth of dimension \(r-1\) and the inclusion
\(Z\cap Y\hookrightarrow Y\) is properly \(C\)-transversal.
\end{lem}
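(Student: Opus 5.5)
We prove the two assertions separately and intersect the resulting dense open subsets of \(|H|\), which is irreducible. Embed \(X\hookrightarrow\bbP^N=\bbP(\rH^0(X,\cO(H))^\vee)\) by \(|H|\), so that members \(Z\) of \(|H|\) are hyperplane sections \(X\cap V\), and \(Z\cap Y=Y\cap V\) with \(Y\subseteq\bbP^N\) smooth projective of dimension \(r\geq1\). The classical Bertini theorem over the algebraically closed field \(k\) shows that for \(V\) in a dense open subset of the dual projective space, \(Y\cap V\) is smooth of dimension \(r-1\). Indeed, the incidence variety of pairs \((y,V)\) with \(T_yY\subseteq V\) has dimension \(r+(N-1-r)=N-1\), so its image in \((\bbP^N)^\vee\) is a proper closed subset. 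Properness of \(Y\) makes that image closed.

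For transversality we use the incidence variety parametrising failures. A closed immersion \(W=Y\cap V\hookrightarrow Y\) of a smooth divisor has conormal line \(N^*_{W/Y,y}\subseteq T^*_yY\), spanned by \(d(\lambda|_Y)(y)\), where \(\lambda\) is a linear form cutting out \(V\). Transversality fails exactly when some nonzero vector of \(C_y\) lies on this line, i.e.\ when the conormal direction \([d\lambda|_Y(y)]\) lies in the projectivisation \(\bbP(C)\subseteq\bbP(T^*Y)\). Since \(C\) is conical of pure dimension \(r\), we have \(\dim\bbP(C)\leq r-1\) (the zero-section components contribute nothing). Consider
\[
  \Sigma=\bigl\{(y,[\xi],V)\ \big|\ y\in V,\ [\xi]\in\bbP(C)_y,\ d\lambda|_Y(y)\in k\xi\bigr\}.
\]
Fix \((y,[\xi])\in\bbP(C)\). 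The hyperplanes through \(y\) whose restricted differential at \(y\) is proportional to \(\xi\) form a linear subspace of codimension \(1+(r-1)=r\) in \((\bbP^N)^\vee\). This is the standard computation: the map from linear forms vanishing at \(y\) to \(T^*_yY\) is surjective because \(\cO(H)\) is very ample. Hence \(\dim\Sigma\leq(r-1)+(N-r)=N-1\), and its image in \((\bbP^N)^\vee\) is a proper closed subset. So a general \(V\) is \(C\)-transversal. We expect this dimension count to be the main obstacle, in particular checking the codimension \(r\) uniformly and handling the zero-section components separately.

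For properness, we need every irreducible component of \(W\times_YC\) to have dimension \(r-1\). Each component \(C_\alpha\) of \(C\) has dimension \(r\), and \(W\times_YC_\alpha\) is its intersection with the pullback of the Cartier divisor \(V\), so components have dimension \(\geq r-1\). To get equality, we show that for general \(V\) no component of \(C_\alpha\cap p^{-1}(V)\) has dimension \(r\). That happens only if the image \(p(C_\alpha)\subseteq Y\), a closed irreducible set since \(C_\alpha\) is conical, lies inside \(V\). Since \(p(C_\alpha)\) is nonempty and \(V\) general, we avoid this by requiring \(V\) not to contain any of the finitely many sets \(p(C_\alpha)\), which is an open dense condition. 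If \(p(C_\alpha)\cap V=\emptyset\), there is no component at all. Intersecting the three dense opens gives the lemma.
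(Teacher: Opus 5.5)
Your proposal is correct, but it takes a different route from the paper.

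\emph{The paper's route.} After embedding by \(|H|\), the paper enlarges \(C\) to the conical subset \(C''=\pi^{-1}(C)\cup T^*_Y\bbP\subseteq T^*\bbP\), where \(\pi\colon T^*\bbP|_Y\to T^*Y\). This set has pure dimension \(\dim\bbP\), and the paper applies Saito's Bertini-type Lemma~1.3.7.2 once: a general hyperplane \(\mathsf H\) is properly \(C''\)-transversal. The three conclusions are then read off from this single statement.
\begin{enumerate}
\item Transversality to \(T^*_Y\bbP\) gives \(d(f|_Y)_z\neq0\), hence smoothness of \(\mathsf H\cap Y\).
\item Transversality to \(\pi^{-1}(C)\) gives \(C\)-transversality.
\item Properness comes from an affine-bundle dimension count. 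The upper bound uses proper \(C''\)-transversality and the lower bound uses Krull's theorem.
\end{enumerate}

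\emph{Your route.} You prove everything by hand, with three separate dense open conditions.
\begin{enumerate}
\item Smoothness follows from classical Bertini.
\item \(C\)-transversality follows from the incidence correspondence over \(\bbP(C)\). Its fibres are linear of dimension \(N-r\) because linear forms vanishing at \(y\) surject onto \(T^*_yY\).
\item For properness, an irreducible \(r\)-dimensional \(C_\alpha\) meets the Cartier divisor \(p^{-1}(W)\) in pure dimension \(r-1\) unless \(p(C_\alpha)\subseteq V\). A general hyperplane avoids this.
\end{enumerate}
All three steps are sound.

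\emph{Comparison.} Your argument is self-contained and uses only dimension theory, so it is visibly valid in every characteristic. It also makes transparent that, for a divisor, proper transversality amounts to not containing any of the bases \(p(C_\alpha)\). Your incidence count is essentially the argument underlying Saito's lemma. The paper's argument is shorter and gets all three conditions from one citation. The cost is that it depends on Saito's lemma and on the bookkeeping with the auxiliary set \(C''\) on the ambient projective space.
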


\begin{proof}
Embed \(X\subseteq\bbP:=\bbP\bigl(\rH^{0}(X,\cO_X(H))^{\vee}\bigr)\) by the very ample \(|H|\), so that
members of \(|H|\) are the hyperplane sections of \(X\). Let \(\pi\colon T^{*}\bbP|_{Y}\to T^{*}Y\) be
the canonical surjection of bundles on \(Y\) and let
\[
  C''\ :=\ \pi^{-1}(C)\ \cup\ T^{*}_{Y}\bbP\ \subseteq\ T^{*}\bbP,
\]
a closed conical subset all of whose irreducible components have dimension \(\dim\bbP\). By
\cite[Lemma~1.3.7.2]{Sai} the hyperplanes \(\mathsf{H}\subset\bbP\) which are properly
\(C''\)-transversal form a dense open subset of \(|H|\). We claim that every such \(\mathsf{H}\)
satisfies the conclusion with \(Z=\mathsf{H}\cap X\). Let \(z\in\mathsf{H}\cap Y\), and choose a local
equation \(f=0\) for \(\mathsf{H}\) near \(z\). Let \(\theta=df_z\in T^{*}_{z}\bbP\), and let
\(\bar\theta\in T^{*}_{z}Y\) be its image under \(\pi\). Then \(\bar\theta=d(f|_Y)_z\).
Transversality of \(\mathsf{H}\) to \(T^{*}_{Y}\bbP\) gives \(\bar\theta\neq0\), so \(\mathsf{H}\cap Y\)
is a smooth divisor in \(Y\) near \(z\)
\cite[Tag~\href{https://stacks.math.columbia.edu/tag/057C}{057C}]{stacks-project}.
Its conormal line at \(z\) is generated by \(\bar\theta\). Transversality of \(\mathsf{H}\) to
\(\pi^{-1}(C)\) gives \(\bar\theta\notin C_{z}\), which is \(C\)-transversality of
\(\mathsf{H}\cap Y\hookrightarrow Y\) at \(z\).

Next we show proper \(C\)-transversality. Note that \(\mathsf{H}\times_{\bbP}\pi^{-1}(C)=\pi^{-1}(C)|_{\mathsf{H}\cap Y}\) is
an affine bundle of relative dimension \(\dim\bbP-r\) over \((\mathsf{H}\cap Y)\times_{Y}C\). Its
irreducible components have dimension at most \(\dim\bbP-1\), being contained in irreducible
components of \(\mathsf{H}\times_{\bbP}C''\), and at least \(\dim\bbP-1\), by Krull's principal ideal
theorem, since \(\mathsf{H}\) is a Cartier divisor in \(\bbP\) and \(\pi^{-1}(C)\) has pure dimension
\(\dim\bbP\). Hence every irreducible component of \((\mathsf{H}\cap Y)\times_{Y}C\) has dimension
\((\dim\bbP-1)-(\dim\bbP-r)=r-1\).
\end{proof}

\section{Euler characteristic bounds}
\label{sec:chi}

This section proves the graded estimate, Proposition~\ref{prop:graded-estimate}, which expresses the
Euler characteristics of the members of a properly transversal complete intersection flag through the
characteristic numbers of a perverse sheaf.

Let \(X\) be a smooth projective variety over \(k\) of dimension \(d\geq1\), and fix a very ample divisor
\(H\) on \(X\).

For a line bundle \(L\), the first Chern class operator is
\(c_1(L)\cap-\colon\CH_a(X)\to\CH_{a-1}(X)\). Let \(c_1(L)\) also denote
its value on \([X]\), regarded as a class in \(\CH^1(X)=\CH_{d-1}(X)\). Let
\(h=c_1(\cO_X(H))\), and let \(D\) also denote the class \(c_1(\cO_X(D))\) of a divisor \(D\).
The degree map \(\deg\colon\CH_0(X)\to\bbZ\) is proper pushforward along \(X\to\Spec k\), and
\(H^d=\deg(h^d\cap[X])\).

Let \(N^1(X)\) denote \(\CH^1(X)\) modulo numerical equivalence
\cite[Definition~19.1.1]{Ful98}, and let \(N^1(X)_{\bbR}=N^1(X)\otimes_{\bbZ}\bbR\). We also denote
the image of \(h\) in \(N^1(X)_{\bbR}\) by \(h\).

For \(1\leq a\leq d\) and \(\alpha\in\CH_a(X)\), the intersection number
\(\deg(D_1\cdots D_a\cap\alpha)\) is symmetric and multilinear in the divisors \(D_1,\dots,D_a\).
If any \(D_i\) is numerically trivial, it has degree zero on the \(1\)-cycle class
\(\bigl(\prod_{j\neq i}D_j\bigr)\cap\alpha\in\CH_1(X)\). Thus the intersection number depends only
on the numerical class of each divisor and extends to a real-valued multilinear form on
\(N^1(X)_{\bbR}\). For \(a=0\), the empty product gives \(\deg(\alpha)\).

Every morphism \(f\colon X\to X\) induces by pullback an endomorphism of \(N^1(X)_{\bbR}\)
\cite[Example~19.1.6]{Ful98}.

For \(F\in\Dbc(X,\Lambda)\) and \(0\leq a\leq d\), let
\(\operatorname{cc}_{X,a}(F)\in\CH_a(X)\) be the component of dimension \(a\) of the characteristic
class of \ref{item:S8}. Define the symmetric multilinear form
\(\gamma_a(F)\colon (N^1(X)_{\bbR})^a\to\bbR\) by
\[
  \gamma_a(F)(D_1,\dots,D_a)\ :=\ \deg\bigl(D_1\cdots D_a\cap\operatorname{cc}_{X,a}(F)\bigr),
\]
and let \(\gamma_a(F)(D):=\gamma_a(F)(D,\dots,D)\) for \(D\in N^1(X)_{\bbR}\). The
characteristic numbers of \(F\) with respect to \(h\) are
\begin{equation}\label{eq:characteristic-numbers}
  c_a(F)\ :=\ \gamma_a(F)(h)\ =\ \deg\Bigl(h^a\cap\operatorname{cc}_{X,a}(F)\Bigr)
  \qquad(0\leq a\leq d).
\end{equation}
By \ref{item:S8} and the index formula \ref{item:S4}, \(c_0(F)=\chi(X,F)\).

\begin{lem}\label{lem:scaling}
Let \(f\colon X\to X\) be \'etale and let \(F\in\Dbc(X,\Lambda)\).
\begin{enumerate}
\item \(\operatorname{cc}_{X}(f^{*}F)=f^{*}\operatorname{cc}_{X}(F)\) in \(\CH_{\bullet}(X)\), where
\(f^{*}\) is the flat pullback of cycle classes along \(f\).
\item For \(0\leq a\leq d\) and \(D_{1},\dots,D_{a}\in N^{1}(X)_{\bbR}\),
\[
  \gamma_{a}(f^{*}F)\bigl(f^{*}D_{1},\dots,f^{*}D_{a}\bigr)\ =\ \deg(f)\,\gamma_{a}(F)(D_{1},\dots,D_{a}).
\]
In particular, if \(h'\in N^{1}(X)_{\bbR}\) satisfies \(f^{*}h'=h\), then
\(c_a(f^{*}F)=\deg(f)\,\gamma_a(F)(h')\) for \(0\leq a\leq d\), where the characteristic numbers
on the left are computed with respect to \(h\).
\end{enumerate}
\end{lem}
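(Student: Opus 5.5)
Part (1) is the last assertion of \ref{item:S9}: since \(f\) is \'etale, \cite[Corollary~7.9]{Sai17} gives \(\operatorname{cc}_X(f^*F)=f^*\operatorname{cc}_X(F)\). To pass to each graded piece, note that flat pullback along the \'etale map \(f\) of relative dimension zero preserves cycle dimension. Hence the identity splits degree by degree as
\[
  \operatorname{cc}_{X,a}(f^*F)=f^*\operatorname{cc}_{X,a}(F)\quad\text{in }\CH_a(X),\qquad 0\leq a\leq d.
\]
This graded form is what the rest of the argument uses.

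For part (2), the first step is to rewrite the left side using part (1):
\[
  \gamma_a(f^*F)(f^*D_1,\dots,f^*D_a)=\deg\bigl(f^*D_1\cdots f^*D_a\cap f^*\operatorname{cc}_{X,a}(F)\bigr).
\]
The second step is to pull the divisor classes through the flat pullback. Compatibility of Chern class operators with flat pullback \cite[Proposition~2.5(d) and Theorem~3.2(d)]{Ful98} gives
\[
  f^*D_1\cdots f^*D_a\cap f^*\alpha=f^*(D_1\cdots D_a\cap\alpha)
\]
for integral divisor classes, with \(f^*D_i=c_1(f^*\cO_X(D_i))\). The third step applies the projection formula for the finite flat map \(f\). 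Since \(X\) is projective and \(f\) is proper and quasi-finite, \(f\) is finite; since \(X\) is connected (it is an irreducible variety), \(f\) is flat of constant degree \(\deg f\). Therefore \(f_*f^*\beta=\deg(f)\,\beta\) for \(\beta\in\CH_0(X)\) \cite[Example~1.7.4]{Ful98}. Because \(\deg\) is compatible with proper pushforward, the left side equals \(\deg(f)\,\deg(D_1\cdots D_a\cap\operatorname{cc}_{X,a}(F))\), which is the right side.

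The remaining step, which I expect to be the only slightly delicate one, is to extend the identity from integral classes to arbitrary \(D_i\in N^1(X)_{\bbR}\). Both sides are real multilinear forms in \(D_1,\dots,D_a\) that depend only on numerical classes. The pullback \(f^*\) on \(N^1(X)_{\bbR}\) is the \(\bbR\)-linear extension of the pullback on \(\CH^1\) \cite[Example~19.1.6]{Ful98}. Two multilinear forms that agree on the spanning set given by the images of \(\CH^1(X)\) therefore agree everywhere. The case \(a=0\) is the statement \(\deg f^*\alpha=\deg(f)\deg\alpha\) for \(0\)-cycles. Finally, the ``in particular'' follows by setting \(D_1=\cdots=D_a=h'\) and using \(f^*h'=h\), so that the left side becomes \(\gamma_a(f^*F)(h)=c_a(f^*F)\).
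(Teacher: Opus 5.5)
Your proposal is correct and follows essentially the same route as the paper: part (1) comes from Saito's \'etale pullback formula, and part (2) from commuting the first Chern class operators with flat pullback, the degree identity for zero-cycles under the finite flat map \(f\) (Fulton, Example~1.7.4), and a reduction to integral divisor classes by multilinearity. The extra details you supply are sound and make explicit what the paper leaves implicit: the degree-by-degree splitting of (1), the finiteness and constant degree of \(f\), and the passage through \(f_*f^*=\deg(f)\) together with compatibility of \(\deg\) with proper pushforward.
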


\begin{proof}
(1) is the \'etale pullback formula of \ref{item:S9}.

(2) Both sides are multilinear in the \(D_{i}\), so we may take the \(D_{i}\) to be classes of
divisors. Chern classes commute with flat pullback \cite[Proposition~2.5(d)]{Ful98}, so (1) gives
\[
  f^{*}D_{1}\cdots f^{*}D_{a}\cap\operatorname{cc}_{X,a}(f^{*}F)
  \ =\ f^{*}\Bigl(D_{1}\cdots D_{a}\cap\operatorname{cc}_{X,a}(F)\Bigr),
\]
and taking degrees gives the identity, since flat pullback along \(f\) multiplies the degree of a
zero-cycle by \(\deg(f)\) \cite[Example~1.7.4]{Ful98}. The last statement follows by taking
\(D_{1}=\dots=D_{a}=h'\).
\end{proof}

\begin{lem}\label{lem:flags}
Let \(P\in\Perv(X,\Lambda)\). There exists a flag of smooth closed subschemes
\[
  Y_{0}\subset Y_{1}\subset\dots\subset Y_{d}=X,
\]
where \(Y_r\) is a complete intersection of \(d-r\) members of \(|H|\).
Moreover, if we define \(Q_r:=P|^*_{Y_r}[r-d]\), then each inclusion
\(\iota_r\colon Y_{r-1}\hookrightarrow Y_r\) is properly \(\SSupp(Q_r)\)-transversal and
\[
  Q_{r-1}=\iota_r^*Q_r[-1]\simeq\iota_r^!Q_r[1]
  \qquad(1\leq r\leq d).
\]
\end{lem}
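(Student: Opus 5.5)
We construct the flag by descending induction on \(r\), starting from \(Y_d=X\) and \(Q_d=P\), using Lemma~\ref{lem:bertini} at each step. The inductive hypothesis at stage \(r\), for \(1\leq r\leq d\), is that \(Y_r\) is a smooth complete intersection of \(d-r\) members of \(|H|\), of dimension \(r\), and that \(Q_r=P|^*_{Y_r}[r-d]\) is a perverse sheaf on \(Y_r\). For \(r=d\) this holds by hypothesis.

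Given stage \(r\geq1\), set \(C_r:=\SSupp(Q_r)\subseteq T^*Y_r\). By \ref{item:S-effectivity}, \(C_r\) is the support of the effective cycle \(\CC(Q_r)\), which is an \(r\)-cycle by \ref{item:S2}. Beilinson's theorem says that every irreducible component of the singular support has dimension exactly \(\dim Y_r=r\), so \(C_r\) is a closed conical subset of pure dimension \(r\). Lemma~\ref{lem:bertini}, applied to \(Y=Y_r\subseteq X\), \(C=C_r\) and the very ample \(H\), gives a dense open subset of \(|H|\) whose members \(Z\) make \(Y_{r-1}:=Z\cap Y_r\) smooth of dimension \(r-1\) with \(\iota_r\colon Y_{r-1}\hookrightarrow Y_r\) properly \(C_r\)-transversal. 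Choose any such \(Z\). Then \(Y_{r-1}\) is a complete intersection of \(d-r+1\) members of \(|H|\); it is nonempty for \(r\geq1\) because \(H\) is ample, and for \(r=1\) it is a finite set of reduced points.

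Now \ref{item:S7} applies to the \(\SSupp(Q_r)\)-transversal immersion \(\iota_r\) of codimension \(1\). It gives \(\iota_r^!Q_r\simeq\iota_r^*Q_r[-2]\), so \(\iota_r^*Q_r[-1]\simeq\iota_r^!Q_r[1]\), and it shows that \(\iota_r^*Q_r[-1]\) is perverse on \(Y_{r-1}\). Since restriction is transitive,
\[
  \iota_r^*Q_r[-1]\ =\ \bigl(P|^*_{Y_r}\bigr)\big|^*_{Y_{r-1}}[r-d-1]\ =\ P|^*_{Y_{r-1}}[(r-1)-d]\ =\ Q_{r-1}.
\]
So \(Q_{r-1}\) is perverse, the inductive hypothesis propagates to stage \(r-1\), and the displayed identity \(Q_{r-1}=\iota_r^*Q_r[-1]\simeq\iota_r^!Q_r[1]\) holds. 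Running down to \(r=1\) produces the whole flag.

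The one point needing care is the purity input to Lemma~\ref{lem:bertini}: it requires every irreducible component of \(C_r\) to have dimension exactly \(r\). This is why we must carry perversity of \(Q_r\) through the induction, rather than work with the restricted complex without shift. Perversity ensures that \(\SSupp(Q_r)\) equals the support of the effective cycle \(\CC(Q_r)\), with no cancellation between components, and Beilinson's equidimensionality supplies the pure dimension. Everything else is a direct application of \ref{item:S7} and Lemma~\ref{lem:bertini}.
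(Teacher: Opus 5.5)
Your proposal is correct and follows the paper's proof essentially step for step: descending induction, pure dimensionality of \(\SSupp(Q_r)\) via \ref{item:S2} and \ref{item:S-effectivity}, Lemma~\ref{lem:bertini} to choose \(Z\), and \ref{item:S7} for perversity of \(Q_{r-1}\) and for \(\iota_r^!Q_r\simeq\iota_r^*Q_r[-2]\). One small correction to your closing remark: Beilinson's equidimensionality holds for every constructible complex, so perversity of \(Q_r\) must be carried through the induction mainly because \ref{item:S7} requires it, not because the purity input to Lemma~\ref{lem:bertini} needs it.
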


\begin{proof}
We proceed by descending induction on \(r\). Having constructed \(Y_r\) and the perverse sheaf \(Q_r\), let
\(C_r:=\SSupp(Q_r)\subseteq T^*Y_r\). By \ref{item:S2} and \ref{item:S-effectivity}, \(C_r\) is closed
and conical with all components of dimension \(r\).
Lemma~\ref{lem:bertini} provides a member \(Z\in|H|\) with \(Y_{r-1}:=Z\cap Y_r\) smooth and
\(\iota_r\) properly \(C_r\)-transversal. Then \(Q_{r-1}=\iota_r^*Q_r[-1]\) is perverse by
\ref{item:S7}, and the induction continues.
By construction for all \(r\), \(Q_r=P|^*_{Y_r}[r-d]\).
\end{proof}

\begin{prop}\label{prop:graded-estimate}
Let \(P\in\Perv(X,\Lambda)\) and let the flag be as in Lemma~\ref{lem:flags}. Then for
\(0\leq r\leq d\)
\[
  \chi\bigl(Y_{r},\,Q_{r}\bigr)
  \ =\
  \sum_{a=d-r}^{d}\binom{a-1}{a-d+r}\,c_a(P),
\]
where the characteristic numbers \(c_a(P)\) are computed with respect to \(h\), and
\(\binom{-1}{0}=1\).

Consequently, if
\(f\colon X\to X\) is \'etale and \(h'\in N^{1}(X)_{\bbR}\) satisfies \(f^{*}h'=h\), then
for any flag \((Y_r)\) and sheaves \(Q_r\) constructed as in Lemma~\ref{lem:flags} for \(f^{*}P\),
\[
  \bigl|\chi\bigl(Y_{r},\,Q_{r}\bigr)\bigr|
  \ \leq\ 2^{d}\deg(f)\sum_{a=d-r}^{d}\bigl|\gamma_{a}(P)(h')\bigr|.
\]
\end{prop}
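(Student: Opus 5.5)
The plan is to iterate the divisor formula \ref{item:S9} down the flag of Lemma~\ref{lem:flags}, and then read off \(\chi\) from the degree-zero component via \ref{item:S8}. Write \(j_r\colon Y_r\hookrightarrow X\) for the inclusion and \(h_r=h|_{Y_r}\).

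\emph{Step 1: set up the recursion.} Since \(Y_{r-1}=Z\cap Y_r\) with \(Z\in|H|\), the normal bundle is \(N_{Y_{r-1}/Y_r}\simeq\cO_X(H)|_{Y_{r-1}}\), so \(c_1(N_{Y_{r-1}/Y_r})=h_{r-1}\). By Lemma~\ref{lem:flags}, \(Q_r\) is perverse and \(\iota_r\) is properly \(\SSupp(Q_r)\)-transversal. Hence \ref{item:S9} applies at each stage and gives
\[
  \operatorname{cc}_{Y_{r-1}}(Q_{r-1})\ =\ (1-h_{r-1})^{-1}\cap\iota_r^!\operatorname{cc}_{Y_r}(Q_r).
\]

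\emph{Step 2: iterate.} Chern class operators commute with Gysin maps of divisors (Fulton, Prop.~6.3), and the composite of the divisor Gysin maps is the Gysin map \(j_r^!\) of the complete intersection. By descending induction this yields
\[
  \operatorname{cc}_{Y_r}(Q_r)\ =\ (1-h_r)^{-(d-r)}\cap j_r^!\operatorname{cc}_X(P).
\]
Here \(j_r^!\) sends \(\operatorname{cc}_{X,a}(P)\) to \(\CH_{a-(d-r)}(Y_r)\). The \(\CH_0(Y_r)\)-component therefore collects, for each \(a\geq d-r\), the term \(h_r^{\,a-d+r}\cap j_r^!\operatorname{cc}_{X,a}(P)\). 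Its coefficient is the coefficient of \(t^{a-d+r}\) in \((1-t)^{-(d-r)}\), namely
\[
  \binom{(d-r)+(a-d+r)-1}{a-d+r}\ =\ \binom{a-1}{a-d+r}.
\]
For \(r=d\) this is \(\binom{-1}{0}=1\) at \(a=0\) and zero for \(a>0\), matching the stated convention.

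\emph{Step 3: compute degrees.} Use \(j_{r*}j_r^!\alpha=h^{d-r}\cap\alpha\) together with the projection formula \(j_{r*}(h_r^{k}\cap\beta)=h^k\cap j_{r*}\beta\). These give
\[
  \deg\bigl(h_r^{\,a-d+r}\cap j_r^!\operatorname{cc}_{X,a}(P)\bigr)\ =\ \deg\bigl(h^a\cap\operatorname{cc}_{X,a}(P)\bigr)\ =\ c_a(P).
\]
Combining this with \(\chi(Y_r,Q_r)=\deg\operatorname{cc}_{Y_r,0}(Q_r)\) from \ref{item:S8} proves the identity.

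\emph{Step 4: the consequence.} Since \(f\) is \'etale, \(f^*P\) is perverse, so the identity applies to it. Lemma~\ref{lem:scaling}(2) gives \(c_a(f^*P)=\deg(f)\,\gamma_a(P)(h')\). Each binomial coefficient is at most \(2^{a-1}\leq 2^d\), and the bound follows from the triangle inequality.

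The main obstacle is Step 2: justifying that iterated divisor Gysin maps compose to \(j_r^!\) and commute with \(h\cap-\), and keeping the dimension bookkeeping straight so that only the \(\CH_0\)-component contributes. Everything else is formal once \ref{item:S9} is known to apply at each stage, which Lemma~\ref{lem:flags} guarantees.
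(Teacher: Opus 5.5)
Your proposal is correct and is essentially the paper's proof: you iterate \ref{item:S9} down the flag, expand \((1-h)^{-(d-r)}\), read off \(\chi(Y_r,Q_r)\) as the degree of the zero-dimensional component via \ref{item:S8}, and conclude with Lemma~\ref{lem:scaling}(2) and the bound \(\binom{a-1}{a-d+r}\leq 2^{d}\). The only difference is bookkeeping: the paper pushes forward to \(X\) at every step, setting \(\alpha_s=(j_s)_*\operatorname{cc}_{Y_s}(Q_s)\) and using \((\iota_s)_*\iota_s^!\beta=h_s\cap\beta\) with the projection formula to get \(\alpha_{s-1}=\frac{h}{1-h}\cap\alpha_s\), which avoids the Gysin composition and the commutation with \(h\cap-\) that you flagged as the main obstacle (your version is also valid by the standard functoriality of Gysin maps for regular embeddings).
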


\begin{proof}
Let \(j_r\colon Y_r\hookrightarrow X\) be the inclusion, and let \(h_r=j_r^*h\) and
\(\alpha_r=(j_r)_*\operatorname{cc}_{Y_r}(Q_r)\in\CH_\bullet(X)\). For \(1\leq s\leq d\), the
normal bundle of \(\iota_s\colon Y_{s-1}\hookrightarrow Y_s\) is \(\cO_X(H)|_{Y_{s-1}}\).
The divisor pullback formula \ref{item:S9} and \ref{item:S3} give
\[
  \operatorname{cc}_{Y_{s-1}}(Q_{s-1})
  \ =\ (1-h_{s-1})^{-1}\cap\iota_s^!\operatorname{cc}_{Y_s}(Q_s).
\]
For \(\beta\in\CH_\bullet(Y_s)\), the identity
\((\iota_s)_*\iota_s^!\beta=h_s\cap\beta\) and the projection formula
\cite[Propositions~2.6(b) and~2.5(c)]{Ful98} therefore give
\[
  \alpha_{s-1}\ =\ \frac{h}{1-h}\cap\alpha_s.
\]
Since \(\alpha_d=\operatorname{cc}_X(P)\), iteration yields
\[
  (j_r)_*\operatorname{cc}_{Y_r}(Q_r)
  \ =\ \frac{h^{d-r}}{(1-h)^{d-r}}\cap\operatorname{cc}_X(P).
\]
Let \(c=d-r\). For \(c>0\), we expand \((1-h)^{-c}\) and take the degree of the zero-dimensional
component. By \ref{item:S8} and \ref{item:S4}, this gives
\begin{equation}\label{eq:chi-refined}
  \chi(Y_r,Q_r)\ =\ \sum_{a=c}^d\binom{a-1}{a-c}\,
  \deg\Bigl(h^a\cap\operatorname{cc}_{X,a}(P)\Bigr).
\end{equation}
This holds also for \(c=0\), when only \(a=0\) contributes, with \(\binom{-1}{0}=1\).
By \eqref{eq:characteristic-numbers}, \(\deg(h^a\cap\operatorname{cc}_{X,a}(P))=c_a(P)\), so this
is the required formula.

For the bound, we apply the formula to \(f^{*}P\), which is perverse
since \(f\) is \'etale, use Lemma~\ref{lem:scaling}(2), and bound the binomial coefficients by \(2^{d}\).
\end{proof}

\section{Proofs of Theorems \ref{thm:A}, \ref{thm:B} and \ref{thm:C}}
\label{sec:proofs}

This section combines the estimate of \S\ref{sec:chi} with Artin vanishing and proves
Theorem~\ref{thm:A}. Proposition~\ref{prop:HS-machine} then converts these Betti bounds into codimension
estimates for the completed Fourier--Mellin transform \(K\) of \S\ref{sec:mellin} and its dual
\(\RHom_S(K,S)\). These give Theorem~\ref{thm:C} and, via Lemma~\ref{lem:tower-vanishing}, the vanishing
of Theorem~\ref{thm:B}.

\subsection{Proof of Theorem \ref{thm:A}}
\label{sec:recursion}

We continue using the notation \(X\), \(d\), \(H\), \(h\), \(H^d\) and \(N^1(X)_{\bbR}\) of
\S\ref{sec:chi}.

\begin{lem}\label{lem:one-cut}
Let \(Y\) be a smooth projective variety, \(Q\in\Perv(Y,\Lambda)\), and let \(Z\subset Y\) be an ample
divisor, smooth and properly \(\SSupp(Q)\)-transversal, with perverse restriction
\(Q_Z:=Q|^{*}_{Z}[-1]\in\Perv(Z,\Lambda)\). Then
\begin{enumerate}
\item \(\rH^{a}(Y,Q)\simeq\rH^{a-1}(Z,\,Q_Z)\) for \(a\geq2\), and \(\rH^{1}(Y,Q)\) is a
quotient of \(\rH^{0}(Z,\,Q_Z)\).
\item \(\rH^{a}(Y,Q)\simeq\rH^{a+1}(Z,Q_Z)\) for \(a\leq-2\), and \(\rH^{-1}(Y,Q)\) is a
subobject of \(\rH^{0}(Z,Q_Z)\).
\end{enumerate}
In particular
\[
  \sum_{a\neq0}\dim\rH^{a}(Y,Q)
  \leq 2\dim\rH^{0}(Z,Q_Z)+\sum_{a\neq0}\dim\rH^{a}(Z,Q_Z).
\]
\end{lem}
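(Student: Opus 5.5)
We will use the open complement \(j\colon U:=Y\setminus Z\hookrightarrow Y\) and the closed immersion \(i\colon Z\hookrightarrow Y\). Since \(Z\) is ample, \(U\) is affine. The plan is to place \(R\Gamma(Y,Q)\) in two localization triangles and control the terms on \(U\) by Artin vanishing.

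\emph{Step 1: the triangle for (1).} We start from
\[
  j_!j^*Q\to Q\to i_*i^*Q\xrightarrow{+1}.
\]
Applying \(R\Gamma(Y,-)\) gives the exact sequence
\[
  \rH^a_c(U,Q|_U)\to\rH^a(Y,Q)\to\rH^a(Z,i^*Q)\to\rH^{a+1}_c(U,Q|_U).
\]
Because \(j\) is an affine open immersion (the complement of a Cartier divisor), \(j^*Q\) is perverse on the affine variety \(U\). Artin vanishing in its compactly supported form, which is dual to \(\rH^a(U,-)=0\) for \(a>0\), gives \(\rH^a_c(U,Q|_U)=0\) for \(a<0\). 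Since \(i^*Q=Q_Z[1]\), we have \(\rH^a(Z,i^*Q)=\rH^{a+1}(Z,Q_Z)\). The vanishing of \(\rH^a_c\) for \(a\le -1\) yields \(\rH^a(Y,Q)\simeq\rH^{a+1}(Z,Q_Z)\) for \(a\le-2\). At \(a=-1\) the left term vanishes, so \(\rH^{-1}(Y,Q)\hookrightarrow\rH^{0}(Z,Q_Z)\) is injective. This is statement (2).

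\emph{Step 2: the triangle for the other side.} Here we use
\[
  i_*i^!Q\to Q\to Rj_*j^*Q\xrightarrow{+1}.
\]
By \ref{item:S7}, \(i^!Q\simeq i^*Q[-2]=Q_Z[-1]\), so \(\rH^a(Z,i^!Q)=\rH^{a-1}(Z,Q_Z)\). Artin vanishing on the affine \(U\) gives \(\rH^a(U,Q|_U)=0\) for \(a>0\). The long exact sequence
\[
  \rH^{a-1}(U,Q|_U)\to\rH^{a-1}(Z,Q_Z)\to\rH^a(Y,Q)\to\rH^a(U,Q|_U)
\]
then gives isomorphisms for \(a\ge2\) and a surjection \(\rH^{0}(Z,Q_Z)\twoheadrightarrow\rH^1(Y,Q)\). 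This is statement (1).

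\emph{Step 3: the inequality.} Sum the dimensions. Degrees \(a\ge2\) contribute \(\sum_{b\ge1}\dim\rH^b(Z,Q_Z)\), and degrees \(a\le-2\) contribute \(\sum_{b\le-1}\dim\rH^b(Z,Q_Z)\). Each of \(a=\pm1\) is bounded by \(\dim\rH^0(Z,Q_Z)\). Together these give the stated bound.

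The main point to check carefully is the bookkeeping of shifts between \(i^*Q\), \(i^!Q\) and \(Q_Z\); the identification \(i^!Q\simeq i^*Q[-2]\) is exactly where the transversality hypothesis enters, through \ref{item:S7}. We also need the precise form of Artin vanishing for perverse sheaves on affine varieties in the étale setting, with the compactly supported version obtained by Verdier duality.
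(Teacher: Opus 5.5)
Your proposal is correct and matches the paper's proof. Both use the same two localization triangles for \(U=Y\setminus Z\) and \(Z\), Artin vanishing for \(\rH^{a}(U,-)\) together with its Verdier-dual form for \(\rH^{a}_{c}(U,-)\), and the transversality identification \(i^{!}Q\simeq i^{*}Q[-2]=Q_Z[-1]\) from \ref{item:S7}; your shift bookkeeping is right (only a side remark: \(j^{*}Q\) is perverse simply because \(j\) is an open immersion, so affineness of \(j\) plays no role there).
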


\begin{proof}
Let \(j\colon U=Y\setminus Z\hookrightarrow Y\) and \(i\colon Z\hookrightarrow Y\). Since \(Z\) is ample,
\(U\) is affine, so Artin vanishing \cite[Th\'eor\`eme~4.1.1]{BBDG18} gives
\(\rH^{a}(U,j^{*}Q)=0\) for \(a>0\), and its Verdier dual gives
\(\rH^{a}_{c}(U,j^{*}Q)=0\) for \(a<0\). The triangle \(i_{*}i^{!}Q\to Q\to Rj_{*}j^{*}Q\) and the
first vanishing give isomorphisms \(\rH^{a}(Z,i^{!}Q)\xrightarrow{\sim}\rH^{a}(Y,Q)\) for \(a\geq2\) together
with a surjection in degree \(1\). The triangle \(j_{!}j^{*}Q\to Q\to i_{*}i^{*}Q\) and the second
vanishing give isomorphisms \(\rH^{a}(Y,Q)\xrightarrow{\sim}\rH^{a}(Z,i^{*}Q)\) for \(a\leq-2\) together
with an injection in degree \(-1\).

Proper transversality gives \(i^{!}Q\simeq i^{*}Q[-2]\) and makes \(Q_Z=i^{*}Q[-1]\) perverse by
\ref{item:S7}, thus implying (1) and (2). The inequality follows by summation.
\end{proof}

For \(P\in\Perv(X,\Lambda)\), let
\begin{equation}\label{eq:sigma-definition}
  \sigma(P):=\max_{\bar x\to X}\sum_a\dim_{\Lambda}\cH^a(P)_{\bar x},
\end{equation}
where the maximum ranges over geometric points and is finite by constructibility. For an \'etale
self-map \(f\colon X\to X\), surjectivity and invariance of geometric stalks under pullback give
\begin{equation}\label{eq:sigma-pullback}
  \sigma((f^n)^*P)=\sigma(P)\quad(n\geq0).
\end{equation}

\begin{lem}\label{prop:recursion}
Fix \(P\in\Perv(X,\Lambda)\) and a flag as in Lemma~\ref{lem:flags}. Let
\(B_{r}:=\sum_{a}\dim\rH^{a}(Y_{r},Q_{r})\). Then
\[
  B_{r}\ \leq\ \bigl|\chi(Y_{r},Q_{r})\bigr|\ +\ 4\,B_{r-1}
  \qquad(1\leq r\leq d),
  \qquad
  B_{0}\ \leq\ H^{d}\,\sigma(P),
\]
and consequently
\[
  B_{r}\ \leq\ \sum_{s=1}^{r}4^{\,r-s}\bigl|\chi(Y_{s},Q_{s})\bigr|\ +\ 4^{r}H^{d}\,\sigma(P)
  \qquad(0\leq r\leq d).
\]
\end{lem}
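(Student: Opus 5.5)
We prove the three assertions in turn: the recursion from Lemma~\ref{lem:one-cut}, the base case from stalk dimensions, and the closed form by induction.

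\emph{The recursion.} Fix \(1\leq r\leq d\) and apply Lemma~\ref{lem:one-cut} with \(Y=Y_r\), \(Q=Q_r\) and \(Z=Y_{r-1}\). The hypotheses hold: \(Y_{r-1}\) is a smooth hyperplane section of \(Y_r\), hence an ample divisor, and it is properly \(\SSupp(Q_r)\)-transversal by Lemma~\ref{lem:flags}. Moreover \(Q_Z=\iota_r^*Q_r[-1]=Q_{r-1}\). Writing \(h^a_r=\dim\rH^a(Y_r,Q_r)\), the lemma gives
\[
  \sum_{a\neq0}h^a_r\ \leq\ 2h^0_{r-1}+\sum_{a\neq0}h^a_{r-1}\ \leq\ 2B_{r-1}.
\]
It remains to control \(h^0_r\). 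By definition of the Euler characteristic,
\[
  h^0_r=\chi(Y_r,Q_r)-\sum_{a\neq0}(-1)^ah^a_r,
\]
so \(h^0_r\leq|\chi(Y_r,Q_r)|+\sum_{a\neq0}h^a_r\). Adding the two estimates,
\[
  B_r\ \leq\ |\chi(Y_r,Q_r)|+2\sum_{a\neq0}h^a_r\ \leq\ |\chi(Y_r,Q_r)|+4B_{r-1}.
\]

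\emph{The base case.} The scheme \(Y_0\) is a smooth complete intersection of \(d\) members of \(|H|\), so it is a reduced finite set of \(H^d\) points; smoothness forces all intersection multiplicities to equal one. By definition \(Q_0=P|_{Y_0}[-d]\), hence
\[
  B_0=\sum_{y\in Y_0}\sum_a\dim\cH^a(P)_{y}\ \leq\ H^d\,\sigma(P).
\]

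\emph{The closed formula.} This follows by induction on \(r\). The case \(r=0\) is the base case. For the inductive step, substitute the bound for \(B_{r-1}\) into the recursion; each earlier term acquires an additional factor of \(4\), which yields exactly
\[
  B_r\ \leq\ \sum_{s=1}^{r}4^{\,r-s}\bigl|\chi(Y_s,Q_s)\bigr|+4^rH^d\sigma(P).
\]

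The only point requiring care is the bookkeeping in the degree \(0\) term: we use the Euler characteristic to bound \(h^0_r\), at the cost of a second copy of \(\sum_{a\neq0}h^a_r\), which produces the constant \(4\). No other difficulty arises.
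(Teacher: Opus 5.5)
Your proposal is correct and follows the paper's proof essentially step for step. The recursion comes from Lemma~\ref{lem:one-cut}, with the degree-zero term bounded through the Euler characteristic, which accounts for the constant \(4\). The base case uses that \(Y_0\) consists of \(H^d\) reduced points, each contributing at most \(\sigma(P)\), and the closed form follows by iterating the recursion.
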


\begin{proof}
Fix \(1\leq r\leq d\) and let \(Y=Y_{r}\), \(Q=Q_{r}\) and \(Z=Y_{r-1}\). By Lemma~\ref{lem:flags},
\(Z\) is a smooth ample divisor in the smooth projective variety \(Y\), it is properly
\(\SSupp(Q)\)-transversal, and \(Q|^{*}_{Z}[-1]=Q_{r-1}\) is perverse, so Lemma~\ref{lem:one-cut}
gives \(\sum_{a\neq0}\dim\rH^{a}(Y,Q)\leq2B_{r-1}\).
Thus
\[
  B_{r}=\dim\rH^{0}(Y,Q)+\sum_{a\neq0}\dim\rH^{a}(Y,Q)
  \leq\bigl|\chi(Y,Q)\bigr|+2\sum_{a\neq0}\dim\rH^{a}(Y,Q)
  \leq\bigl|\chi(Y,Q)\bigr|+4B_{r-1}.
\]
For the base case, \(Y_0\) is a zero-dimensional complete intersection of \(d\) members of \(|H|\),
so \(\deg[Y_0]=H^d\). Since \(Y_0\) is smooth and \(k\) is algebraically closed, it consists of
\(H^d\) reduced points. Moreover, \(Q_0=P|^*_{Y_0}[-d]\), so each point contributes at most
\(\sigma(P)\) to \(B_0\) by \eqref{eq:sigma-definition}. Hence \(B_0\leq H^d\sigma(P)\).
Iterating the recursion gives the last inequality.
\end{proof}

\begin{prop}\label{prop:finite-level}
Let \(f\colon X\to X\) be \'etale and let \(h'\in N^{1}(X)_{\bbR}\) satisfy \(f^{*}h'=h\).
Then for every \(P\in\Perv(X,\Lambda)\) and every \(i\geq0\)
\[
  \dim_{\Lambda}\rH^{i}\bigl(X,f^{*}P\bigr)
  \ \leq\ 4^{d}\Bigl(H^{d}\,\sigma(P)\ +\ 2^{d}\deg(f)\sum_{a=i}^{d}\bigl|\gamma_{a}(P)(h')\bigr|\Bigr),
\]
and \(\dim_{\Lambda}\rH^{-i}(X,f^{*}P)\) is bounded by the same expression with \(\bbD P\) in place
of \(P\).
\end{prop}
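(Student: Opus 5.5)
Apply the machinery of Lemmas~\ref{lem:flags}, \ref{lem:one-cut}, \ref{prop:recursion} and Proposition~\ref{prop:graded-estimate} to the perverse sheaf \(P':=f^{*}P\), which is perverse because \(f\) is \'etale. By Lemma~\ref{lem:flags} applied to \(P'\), choose a flag \(Y_0\subset\dots\subset Y_d=X\) with \(Q_r=P'|^{*}_{Y_r}[r-d]\) and all inclusions properly \(\SSupp(Q_r)\)-transversal. For \(i\geq0\), iterate Lemma~\ref{lem:one-cut}(1) along \(Y_d\supset Y_{d-1}\supset\dots\supset Y_{d-i}\). Each cut lowers the degree by one and replaces an isomorphism by a quotient only at the last step. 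Thus \(\rH^{i}(X,P')\) is a subquotient (in fact a quotient) of \(\rH^{0}(Y_{d-i},Q_{d-i})\) when \(i\geq1\), and trivially equal to \(\rH^0(Y_d,Q_d)\) when \(i=0\). In either case
\[
  \dim\rH^{i}(X,f^{*}P)\ \leq\ B_{d-i}.
\]
Each \(Y_s\) is an ample divisor in \(Y_{s+1}\), being a hyperplane section, so the lemma applies.

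Next bound \(B_{d-i}\) by Lemma~\ref{prop:recursion}:
\[
  B_{d-i}\leq\sum_{s=1}^{d-i}4^{d-i-s}\bigl|\chi(Y_s,Q_s)\bigr|+4^{d-i}H^{d}\sigma(P').
\]
By \eqref{eq:sigma-pullback}, or directly since \(f\) is surjective \'etale and stalks are preserved, \(\sigma(P')=\sigma(P)\). For the Euler characteristics, Proposition~\ref{prop:graded-estimate} gives
\[
  |\chi(Y_s,Q_s)|\leq2^d\deg(f)\sum_{a=d-s}^{d}|\gamma_a(P)(h')|.
\]
For \(s\leq d-i\) we have \(d-s\geq i\), so each such sum is dominated by \(\sum_{a=i}^{d}|\gamma_a(P)(h')|\). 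Bounding \(\sum_{s=1}^{d-i}4^{d-i-s}\leq4^{d}\) and \(4^{d-i}\leq4^d\) then yields exactly the stated inequality. One small point is the case \(i>d\): the sum is empty, but there \(\rH^{i}(X,P')=0\) anyway since \(R\Gamma\) of a perverse sheaf lies in \([-d,d]\).

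For negative degrees, use Verdier duality. Since \(f\) is \'etale, \(f^{*}\bbD P\simeq\bbD f^{*}P\), and \(\rH^{-i}(X,f^{*}P)^{\vee}\cong\rH^{i}(X,f^{*}\bbD P)\). Apply the bound just proved to \(\bbD P\), which is perverse. Alternatively, one could iterate Lemma~\ref{lem:one-cut}(2), but duality is cleaner and gives the statement with \(\bbD P\) as claimed.

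The only step needing care is the bookkeeping in the iterated cuts. Each cut must land in degree \(i-1\) on a perverse sheaf on the next member of the flag, and the final step must be taken in degree \(1\to0\), where only a surjection is available. I expect the index matching between \(d-s\geq i\) and the range \(a\geq i\) to be the main point to check; everything else is already packaged in the cited lemmas.
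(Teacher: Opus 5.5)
Your proposal is correct and follows the paper's proof essentially step by step. You apply Lemma~\ref{lem:flags} to \(f^{*}P\), iterate Lemma~\ref{lem:one-cut}(1) to get \(\dim\rH^{i}(X,f^{*}P)\leq B_{d-i}\), bound \(B_{d-i}\) by Lemma~\ref{prop:recursion} and Proposition~\ref{prop:graded-estimate} using \(\sigma(f^{*}P)=\sigma(P)\) and \(d-s\geq i\), and handle negative degrees by Verdier duality, which matches the paper's bookkeeping up to trivial differences in how the factor \(4^{d}\) is reached.
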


\begin{proof}
Since \(f\) is \'etale, \(f^*P\) is perverse, so \(\rH^i(X,f^*P)=0\) for \(i>d\) \cite[4.2.4]{BBDG18}, and we
may assume \(i\leq d\). Choose a flag \(Y_0\subset\cdots\subset Y_d=X\), cut out by members of \(|H|\), as in
Lemma~\ref{lem:flags} for \(f^*P\), and let
\[
  Q_r:=(f^*P)|^*_{Y_r}[r-d],\qquad
  B_r:=\sum_a\dim\rH^a(Y_r,Q_r).
\]
By \eqref{eq:sigma-pullback}, the base case of Lemma~\ref{prop:recursion} gives
\[
  B_0\leq H^d\sigma(f^*P)=H^d\sigma(P),
\]
independently of \(f\) and of the chosen flag. For \(0\leq r\leq d\), 
Lemma~\ref{prop:recursion} and the Euler characteristic bound of
Proposition~\ref{prop:graded-estimate} give
\begin{align*}
  B_r&\leq 4^rH^d\sigma(P)
  +2^d\deg(f)\sum_{s=1}^r4^{r-s}\sum_{a=d-s}^d|\gamma_a(P)(h')|\\
  &\leq 4^r\Bigl(H^d\sigma(P)+2^d\deg(f)\sum_{a=d-r}^d|\gamma_a(P)(h')|\Bigr).
\end{align*}
For the second inequality, each inner sum is at most
\(\sum_{a=d-r}^d|\gamma_a(P)(h')|\), since \(s\leq r\), and
\(\sum_{s=1}^r4^{r-s}=(4^r-1)/3\leq4^r\).

For \(1\leq i\leq d\), Lemma~\ref{lem:one-cut}(1) gives
\[
  \rH^0(Y_{d-i},Q_{d-i})\twoheadrightarrow
  \rH^1(Y_{d-i+1},Q_{d-i+1})\xrightarrow{\sim}\rH^i(X,f^*P),
\]
where the last arrow is the composite of \(i-1\) isomorphisms. Thus
\(\dim\rH^i(X,f^*P)\leq B_{d-i}\). For \(i=0\), the same inequality is
\(\dim\rH^0(X,f^*P)\leq B_d\). Taking \(r=d-i\) in the bound for \(B_r\) and using
\(4^{d-i}\leq4^d\) proves the estimate for \(0\leq i\leq d\). Finally the bound for \(\rH^{-i}(X,f^{*}P)\)
follows from Verdier duality.

\end{proof}

Proposition~\ref{prop:finite-level} gives the following theorem.

\begin{thm}\label{thm:polarized}
Let \(X\) be a smooth projective variety of dimension \(d\) over \(k\) with a very ample divisor
\(H\), and let \(f\colon X\to X\) be an \'etale morphism with \(f^{*}h=qh\) in
\(N^1(X)_{\bbR}\) for some \(q\geq1\). Then for every \(P\in\Perv(X,\Lambda)\) there is a constant
\(C_{P}\), depending only on \(P\) and \(H\), such that for all \(i\in\bbZ\)
\[
  \dim_{\Lambda}\rH^{i}\bigl(X,f^{*}P\bigr)\ \leq\ C_{P}\,q^{d-|i|}.
\]
\end{thm}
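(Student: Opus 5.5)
We deduce Theorem~\ref{thm:polarized} from Proposition~\ref{prop:finite-level} by choosing \(h'=q^{-1}h\in N^1(X)_{\bbR}\). This class satisfies \(f^*h'=q^{-1}f^*h=h\), since pullback on \(N^1(X)_{\bbR}\) is linear. Because \(\gamma_a(P)\) is an \(a\)-multilinear form, homogeneity gives
\[
  \gamma_a(P)(h')\ =\ q^{-a}\gamma_a(P)(h)\ =\ q^{-a}c_a(P).
\]
For \(i\geq0\), Proposition~\ref{prop:finite-level} therefore gives
\[
  \dim\rH^{i}(X,f^*P)\ \leq\ 4^d\Bigl(H^d\sigma(P)+2^d\deg(f)\sum_{a=i}^{d}q^{-a}|c_a(P)|\Bigr).
\]

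The key step is to compute \(\deg(f)\) in terms of \(q\), namely \(\deg(f)=q^d\). For this, compare top self-intersections. Since \(f\) is finite and flat (it is étale), the projection formula gives
\[
  \deg\bigl((f^*h)^d\cap[X]\bigr)\ =\ \deg(f)\,\deg\bigl(h^d\cap[X]\bigr),
\]
which is the case \(a=d\), \(\operatorname{cc}\) replaced by \([X]\), of the computation in Lemma~\ref{lem:scaling}(2). On the other hand, \(f^*h=qh\) numerically, so the left side equals \(q^dH^d\). Since \(H^d>0\), this forces \(\deg(f)=q^d\).

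With this, each term is \(\deg(f)q^{-a}|c_a(P)|=q^{d-a}|c_a(P)|\leq q^{d-i}|c_a(P)|\) for \(a\geq i\), using \(q\geq1\). Likewise \(H^d\sigma(P)\leq q^{d-i}H^d\sigma(P)\). So we can take
\[
  C_P^{+}\ =\ 4^d\Bigl(H^d\sigma(P)+2^d\sum_{a}|c_a(P)|\Bigr),
\]
which depends only on \(P\) and \(H\). For negative degrees, Proposition~\ref{prop:finite-level} gives the same bound with \(\bbD P\) in place of \(P\), which supplies a constant \(C_P^{-}\). We then set \(C_P=\max(C_P^+,C_P^-)\).

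There is one point to check: \(|i|>d\) must be allowed, since then \(q^{d-|i|}<1\). This is harmless, because \(\rH^i(X,f^*P)=0\) for \(|i|>d\) by perversity. The only substantive step is the degree identity \(\deg(f)=q^d\), and it uses only that numerical intersection numbers are pullback-compatible for finite flat maps.
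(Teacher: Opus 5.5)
Your proposal is correct and follows essentially the same route as the paper. You apply Proposition~\ref{prop:finite-level} with \(h'=q^{-1}h\), derive \(\deg f=q^{d}\) by comparing top self-intersections, use multilinearity to bound \(q^{d-a}\leq q^{d-i}\), dispose of \(|i|>d\) by perversity, and take the larger of the constants for \(P\) and \(\bbD P\).
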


\begin{proof}
By Verdier duality and the vanishing of cohomology outside \([-d,d]\), it suffices to prove the
estimate for \(0\leq i\leq d\). Taking top intersection numbers gives
\((\deg f)H^d=(f^{*}H)^d=q^dH^d\). Since \(H^d>0\), we have \(\deg f=q^d\).
Proposition~\ref{prop:finite-level} applies with \(h'=q^{-1}h\), since \(f^*h'=h\). By multilinearity,
\(\gamma_a(P)(h')=q^{-a}c_a(P)\), so
\begin{align*}
  \dim_{\Lambda}\rH^{i}\bigl(X,f^{*}P\bigr)
  \ &\leq\ 4^{d}\Bigl(H^{d}\sigma(P)+2^{d}\sum_{a=i}^{d}q^{d-a}\bigl|c_a(P)\bigr|\Bigr)\\
  &\leq\ 4^{d}\Bigl(H^{d}\sigma(P)+2^{d}\sum_{j=0}^{d}\bigl|c_{j}(P)\bigr|\Bigr)\,q^{d-i}.
\end{align*}
Taking \(C_P\) to be the larger of the constants for \(P\) and \(\bbD P\) proves the theorem.
\end{proof}

\begin{proof}[Proof of Theorem~\ref{thm:A}]
Choose a very ample divisor \(H\) on \(A\), and let \(X=A\) and \(d=g\).
Since \([-1]^{*}h=h\) in \(N^1(A)_{\bbR}\), the formula for pullback by multiplication
\cite[Chapter~II, \S6, Corollary~3, and \S8(iv)]{Mum08} gives \([m]^{*}h=m^{2}h\).
For \(m\) invertible in \(k\), the morphism \([m]\) is \'etale, so Theorem~\ref{thm:polarized}
applies with \(f=[m]\) and \(q=m^2\).
The resulting constant depends only on \(P\) and \(H\). One may take \(C_{P}\) to be the larger of
\(4^{g}(H^{g}\sigma(P)+2^{g}\sum_{j=0}^{g}|c_j(P)|)\) and the same expression for \(\bbD P\).
\end{proof}

\begin{rem}\label{rem:polarized-endomorphisms}
The hypothesis of Theorem~\ref{thm:polarized} for \(q>1\) is restrictive. When \(k=\bbC\), it implies that
\(X\) admits a finite \'etale cover by an abelian variety \cite[Lemma~2.3 and
Theorem~3.3]{NakayamaZhang10}. In positive characteristic, Rai \cite[Theorem~1.1]{Rai26} proves the same
conclusion for a smooth projective variety of non-negative Kodaira dimension that admits a separable
polarized endomorphism and has virtually abelian \'etale fundamental group.
\end{rem}

\begin{rem}\label{rem:BBDG-comparison}
For a product \(A=\prod_{a=1}^{g}E_a\) of elliptic curves, the bound of Theorem~\ref{thm:A} follows from
the proof of \cite[Proposition~4.5.1]{BBDG18}. 
\end{rem}

\begin{rem}\label{rem:katz-cc}
See also \cite{PanZhangZhang26} and \cite{HuTeyssier25}, where Katz's argument is used in combination with
characteristic cycles to bound Betti numbers.
\end{rem}

\subsection{Proofs of Theorems \ref{thm:B} and \ref{thm:C}}
\label{sec:proofs-bc}

We use \(S=\Fl[[x_1,\dots,x_{2g}]]\) and \(I_m=(x_1^m,\dots,x_{2g}^m)\) for \(m=\ell^n\), as in
(\ref{para:group-algebra}). The following proposition converts length bounds at finite level into
codimension bounds.

\begin{prop}\label{prop:HS-machine}
Let \(\cK\) be a perfect complex over \(S\), and suppose \(\cK\in D^{\leq g}(S)\) and that there is a constant
\(C\) with
\[
  \lgth_{S}\ \rH^{i}\bigl(\cK\otimes^{L}_{S}S/I_m\bigr)\ \leq\ C\,m^{2(g-i)}
  \qquad\text{for all } i\geq 0 \text{ and } m=\ell^n.
\]
Then \(\dim\Supp\rH^{i}(\cK)\leq 2(g-i)\), that is \(\codim\Supp\rH^{i}(\cK)\geq2i\), for all
\(i\geq0\).
\end{prop}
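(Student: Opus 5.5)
Represent \(\cK\) by a bounded complex of finite free \(S\)-modules. Since \(\cK\in D^{\leq g}(S)\), the cohomology modules \(M_j:=\rH^{j}(\cK)\) vanish for \(j>g\). We argue by descending induction on \(i\), from \(i=g\) down to \(i=0\). The inductive claim at \(i\) is that \(\dim\Supp M_j\leq2(g-j)\) for all \(j\geq i\); the case \(i=g+1\) is vacuous. The tool is the hyper-Tor spectral sequence
\[
  E_2^{p,q}=\Tor^{S}_{-p}\bigl(M_q,\,S/I_m\bigr)\ \Longrightarrow\ \rH^{p+q}\bigl(\cK\otimes^{L}_{S}S/I_m\bigr),
\]
which lives in the region \(-2g\leq p\leq0\), \(q\leq g\). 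The basic facts about a finite \(S\)-module \(N\) are the following. First, \(\lgth(N/I_mN)\) grows exactly like \(m^{\dim\Supp N}\): it is at least \(c\,m^{\dim\Supp N}\) for some \(c>0\), because \(I_m\) is squeezed between \(\frm^{2g(m-1)+1}\) and \(\frm^{m}\) and Hilbert--Samuel theory applies. Second, for every \(p\), \(\lgth\Tor^S_p(N,S/I_m)\leq C_N\,m^{\dim\Supp N}\). To see the second fact, note that \(S/I_m\) is resolved by the Koszul complex on the regular sequence \(x_1^m,\dots,x_{2g}^m\). Hence \(\Tor_p\) is the Koszul homology \(H_p(x^m;N)\). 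Each such homology module is a subquotient of \(N^{\binom{2g}{p}}\) annihilated by \(I_m\), and its length is bounded by \(C\,m^{\dim N}\). This can be proved, for instance, by filtering \(N\) by modules \(S/\mathfrak p\) and using additivity up to bounded error, or by using that Koszul homology is killed by \(I_m\) and has \(\chi=e\cdot m^{\dim}\)-type bounds.

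For the inductive step, fix \(i\) and suppose, for contradiction, that \(\dim\Supp M_i=\delta>2(g-i)\). Look at the abutment in degree \(i\). The term \(E_2^{0,i}=M_i/I_mM_i\) has length at least \(c\,m^{\delta}\). On the \(E_r\) pages it can only be hit by differentials, since nothing leaves it out of the \(p\leq0\) region. These differentials start from the terms \(E_r^{-r,\,i+r-1}\), which are subquotients of \(\Tor_r(M_{i+r-1},S/I_m)\) with \(r\geq2\). By induction \(\dim\Supp M_{i+r-1}\leq2(g-i-r+1)<2(g-i)\), so these sources have length \(O(m^{2(g-i)-2})\). Therefore \(E_\infty^{0,i}\) has length at least \(c\,m^{\delta}-O(m^{2(g-i)-2})\). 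It is a subquotient of \(\rH^{i}(\cK\otimes^L S/I_m)\), more precisely a quotient of it, as the \(p=0\) edge. Hence
\[
  c\,m^{\delta}-O\bigl(m^{2(g-i)-2}\bigr)\ \leq\ C\,m^{2(g-i)}.
\]
Letting \(m=\ell^n\to\infty\) gives a contradiction. This yields \(\dim\Supp M_i\leq2(g-i)\). The case \(i=0\) gives the bound \(2g\), which is automatic; so the induction only genuinely matters for \(i\geq1\), though the same argument covers all \(i\).

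The main obstacle is the uniform polynomial bound \(\lgth\Tor^S_p(N,S/I_m)=O(m^{\dim\Supp N})\) for finite modules \(N\) with all \(p\). I would prove it first for \(N=S/\mathfrak p\), using that \(\lgth(N/I_mN)\leq\lgth(N/\frm^{m}\)-type\()\), via the inclusion \(I_m\supseteq\frm^{2g(m-1)+1}\) and the Hilbert--Samuel polynomial of degree \(\dim N\). Koszul homology \(H_p(x^m;N)\) is a subquotient of \(\bigoplus N/\)\(\ldots\); more cleanly, it is annihilated by \(I_m\). Using the long exact sequences in Koszul homology, I would bound it by induction on the number of elements: each step \(H_p(x_1^m,\dots,x_j^m;N)\) sits in a sequence with kernel and cokernel of \(x_j^m\) on the previous homology, and those lengths are controlled by lengths of modules killed by \(I_m\) of dimension \(\leq\dim N\). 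Finally, I would pass to general \(N\) by a prime filtration and the long exact Tor sequence.
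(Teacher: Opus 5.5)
Your skeleton is the paper's own argument: descending induction, the hyper-Tor spectral sequence in which the edge term \(M_i/I_mM_i\) receives differentials only from \(E_r^{-r,i+r-1}\), the inductive bound on those sources, and a Hilbert--Samuel comparison via \(I_m\subseteq\frm^m\). The one input you must supply is the uniform bound \(\lgth\Tor^S_p(N,S/I_m)\leq C_N m^{\dim N}\). The paper takes this from \cite[Corollary~7]{GR86}, and the justifications you offer for it do not work as written.

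\emph{The subquotient argument.} Saying that Koszul homology is ``a subquotient of \(N^{\binom{2g}{p}}\) annihilated by \(I_m\)'' gives no uniform bound, because the subquotient changes with \(m\). Take \(g=1\), \(N=S\) and \(s\ge m\). Then \(\frm^s/I_m\frm^s\) is such a subquotient. It contains every monomial of degree in \([s,s+m-1]\), so its length is at least \(ms\), which is much larger than \(m^2\) once \(s\gg m\).

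\emph{The Euler characteristic.} This only controls the alternating sum of the lengths, not the individual terms.

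\emph{Induction on the number of elements.} Here the intermediate modules \(H_p(x_1^m,\dots,x_j^m;N)\) for \(j<2g\), and the kernels and cokernels of \(x_j^m\) on them, are in general not of finite length. So there are no lengths to control unless you carry much finer uniform information.

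There is a short proof that fits your plan. First, the long exact Koszul sequence makes \(\lgth H_p(x^m;-)\) subadditive on short exact sequences. So a prime filtration reduces you to \(N=S/\mathfrak p\), and you can then induct on \(\delta=\dim S/\mathfrak p\).

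\emph{Case \(\delta=0\).} Then \(N=\Fl\), the Koszul differentials vanish, and \(\lgth H_p=\binom{2g}{p}\).

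\emph{Case \(\delta\geq1\).} Some \(x_j\notin\mathfrak p\), so \(x_j\) is a nonzerodivisor on \(N\) and \(\dim N/x_jN=\delta-1\). The long exact Koszul sequence of \(0\to N\xrightarrow{x_j}N\to N/x_jN\to0\) embeds \(H_p(x^m;N)/x_jH_p(x^m;N)\) into \(H_p(x^m;N/x_jN)\). Since \(x_j^m\) annihilates \(H_p(x^m;N)\), the filtration by \(x_j^tH_p(x^m;N)\) for \(0\leq t\leq m\) has \(m\) graded pieces, each a quotient of \(H_p/x_jH_p\). Hence
\[
  \lgth H_p(x^m;N)\ \leq\ m\,\lgth H_p(x^m;N/x_jN)\ \leq\ C\,m^{\delta}
\]
by induction. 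With this lemma in place, your argument is complete.
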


\begin{proof}
Let \(M^{b}=\rH^{b}(\cK)\), finite \(S\)-modules vanishing for \(b>g\). We argue by descending
induction on \(i\), the statement being vacuous for \(i>g\). The hyper-Tor spectral sequence
\[
  E_2^{-a,b}\ =\ \Tor^{S}_{a}\bigl(M^{b},\,S/I_m\bigr)\ \Longrightarrow\
  \rH^{b-a}\bigl(\cK\otimes^{L}_{S}S/I_m\bigr)
\]
has \(E_2^{0,i}=M^{i}/I_mM^{i}\) on its edge. The differentials into this spot originate from
\(E_r^{-r,\,i+r-1}\) with \(r\geq2\). These terms vanish for \(r>g-i+1\).

For the remaining terms, the inductive hypothesis gives
\(\dim\Supp M^{i+r-1}\leq 2(g-i-r+1)\leq 2(g-i)-2\).
Since \(x_1^m,\dots,x_{2g}^m\) is a regular sequence of \(S\), the Tor groups on the \(E_2\)-page are
its Koszul homology on the modules \(M^b\). For every nonzero \(M^b\), \cite[Corollary~7]{GR86} bounds these Koszul homology lengths by a polynomial
in \(m\) of degree at most \(\dim\Supp M^b\) (see also \cite[Proposition~6.14]{Mou24} for a multigraded
refinement). The inductive hypothesis therefore gives a constant \(D_i\geq0\), independent of \(m\),
such that the sum of the lengths of these source terms is at most \(D_i m^{2(g-i)-2}\).

The abutment in total degree \(i\) has length at most \(C\,m^{2(g-i)}\) by hypothesis. There are no
outgoing differentials from \(E_r^{0,i}\), so
\[
  \lgth_S\ \bigl(M^{i}/I_mM^{i}\bigr)
  \ \leq\ C\,m^{2(g-i)}+D_i m^{2(g-i)-2}\ \leq\ (C+D_i)m^{2(g-i)}.
\]
If \(M^i=0\), the desired bound is immediate. Otherwise, since \(I_m\subseteq\frm^m\), the same
upper bound holds for \(\lgth_S(M^i/\frm^mM^i)\). This function agrees for large \(m\) with its
Hilbert--Samuel polynomial, whose degree is \(\dim\Supp M^i\) and whose leading coefficient is
positive. Comparing growth along \(m=\ell^n\) gives \(\dim\Supp M^i\leq2(g-i)\).
\end{proof}

\begin{proof}[Proof of Theorem~\ref{thm:B}]

Let \(F\) be a constructible sheaf and let \(s=\dim\Supp F\). Then \(F[s]\in{}^{p}D^{\leq0}(A,\Lambda)\), so
\(G=\bbD(F[s])\) lies in \({}^{p}D^{\geq0}(A,\Lambda)\), and its finite filtration by perverse truncations has
graded pieces \(\bbD P_j[-j]\) with \(P_j={}^{p}\rH^{-j}(F[s])\in\Perv(A,\Lambda)\) and \(j\geq0\). The functor
\(\widehat{\FM}_A\) is exact, being a composition of derived functors, and \(D^{\geq0}(S)\) is stable under
extensions. Since \(\rH^{i}(A,[\ell^{n}]^{*}F[s])=\rH^{i+s}(A,[\ell^{n}]^{*}F)\),
Lemma~\ref{lem:tower-vanishing} applied to \(F[s]\) shows that both statements follow once
\(\widehat{\FM}_A(\bbD P)\in D^{\geq0}(S)\) for every \(P\in\Perv(A,\Lambda)\).

Let  \(K=\widehat{\FM}_A(\bbD P)\) and \(\cK'=\RHom_S(K,S)\). By
\ref{item:M2} for the perverse sheaf \(\bbD P\), the complexes \(K\) and \(\cK'\) are perfect and belong to
\(D^{[-g,g]}(S)\). Thus
\[
  \cK'\otimes^{L}_{S}S_m\ \simeq\ \RHom_{S_m}\bigl(K\otimes^{L}_{S}S_m,\,S_m\bigr)
  \ \simeq\ \RHom_{S_m}\bigl(R\Gamma(A,[m]^{*}\bbD P),\,S_m\bigr)
\]
by \ref{item:M1}.

The ring \(S_m=S/I_m\simeq\Fl[x_1,\dots,x_{2g}]/(x_1^m,\dots,x_{2g}^m)\) is a local complete
intersection of dimension \(0\), hence Gorenstein. Its dualizing module
\(\omega_{S_m/\Fl}=\Hom_{\Fl}(S_m,\Fl)\) is free of rank one
\cite[Tag~\href{https://stacks.math.columbia.edu/tag/0DWL}{0DWL}]{stacks-project}.
Choosing an isomorphism \(\omega_{S_m/\Fl}\simeq S_m\), adjunction gives the Artinian Gorenstein
duality isomorphism \(\Hom_{S_m}(N,S_m)\simeq\Hom_{\Fl}(N,\Fl)\), functorially in the \(S_m\)-module
\(N\). In particular, \(\Hom_{S_m}(-,S_m)\) is exact.

Since the residue field of \(S\) is \(\Fl\), a finite \(S\)-module annihilated by a power of \(\frm\)
has a composition series with all factors \(\Fl\), so its length is its \(\Fl\)-dimension. Thus
\(\Hom_{S_m}(-,S_m)\) also preserves lengths of finite \(S_m\)-modules. This converts the bounds of
Theorem~\ref{thm:A} into the length bounds required by Proposition~\ref{prop:HS-machine}.
Hence
\[
  \rH^{j}\bigl(\cK'\otimes^{L}_{S}S_m\bigr)\ \simeq\ \Hom_{S_m}\bigl(\rH^{-j}(A,[m]^{*}\bbD P),S_m\bigr)
\]
has length \(\dim_{\Fl}\rH^{-j}(A,[m]^{*}\bbD P)\leq C_{\bbD P}\,m^{2(g-j)}\) for \(j\geq0\) by
Theorem~\ref{thm:A}.

Proposition~\ref{prop:HS-machine} applied to \(\cK'\) yields
\(\codim\Supp\rH^{j}(\cK')\geq2j\geq j\) for \(j\geq0\), so \(K=\widehat{\FM}_A(\bbD P)\in D^{\geq0}(S)\) by
\ref{item:M3}, for every perverse \(P\), as required.
\end{proof}

\begin{rem}\label{rem:dual-codimension}
For \(P\in\Perv(A,\Lambda)\), let
\(\cK'=\RHom_S(\widehat{\FM}_A(\bbD P),S)\). Then the above proof implies that 
\[
  \codim\Supp\rH^{j}(\cK')\geq 2j \qquad\text{for all }j\geq0.
\]
\end{rem}

\begin{proof}[Proof of Theorem~\ref{thm:C}]
Let \(P\) be perverse and \(\cK=\widehat{\FM}_A(P)\). By \ref{item:M1} and \ref{item:M2}, the complex \(\cK\) is
perfect and belongs to \(D^{[-g,g]}(S)\). Its derived restriction to \(S_m\) is
\(R\Gamma(A,[m]^{*}P)\) by \ref{item:M1}. Theorem~\ref{thm:A} combined with
Proposition~\ref{prop:HS-machine} gives
\(\codim\Supp\rH^{i}(\widehat{\FM}_A(P))\geq2i\) for \(i\geq0\).\footnote{
For elliptic curves \(E_1,\dots,E_g\), take \(A=E_1\times\dots\times E_g\) and
\(i_B:B=E_1\times\dots\times E_b\hookrightarrow A\), with
\(0\leq b\leq g\). Then
\[
  \widehat{\FM}_A(i_{B*}\Fl[b])\simeq(S/\frm_BS)[-b],
\]
where \(\frm_BS\) is generated by \([\gamma]-1\) for \(\gamma\in\Tl B\). Its cohomology in
degree \(b\) has support of codimension \(2b\), so the bound is sharp.}
\end{proof}

\begin{cor}\label{cor:connectivity}
Let \(P\in\Perv(A,\Lambda)\) and \(\cK=\widehat{\FM}_A(P)\).
\begin{enumerate}
\item The complexes \(\cK\) and \(\RHom_S(\cK,S)\) lie in \(D^{\geq0}(S)\).
\item There is a closed subset \(Z\subseteq\Spec S\) of codimension at least \(2\) such that on
\(\Spec S\setminus Z\) the complex \(\cK\) is a locally free module of rank \(\chi(A,P)\) concentrated
in degree \(0\). In particular \(\chi(A,P)\geq0\).
\end{enumerate}
\end{cor}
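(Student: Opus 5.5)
For (1), I would combine results already established. The proof of Theorem~\ref{thm:B} shows \(\widehat{\FM}_A(\bbD P)\in D^{\geq0}(S)\) for every perverse \(P\). Since \(\bbD\) preserves \(\Perv(A,\Lambda)\) and \(\bbD\bbD P\simeq P\), applying this to \(\bbD P\) gives \(\cK=\widehat{\FM}_A(P)\in D^{\geq0}(S)\). For the dual, Theorem~\ref{thm:C} gives \(\codim\Supp\rH^{i}(\cK)\geq2i\geq i\) for \(i\geq0\). For \(i<0\) the condition is vacuous, since codimension is nonnegative. Criterion~\ref{item:M3}, applied to the perfect complex \(Q=\cK\), then gives \(\RHom_S(\cK,S)\in D^{\geq0}(S)\).

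For (2), I would work locally at a prime \(\mathfrak p\in\Spec S\) of height \(\leq1\). Set \(Z=\bigcup_{i\geq1}\Supp\rH^{i}(\cK)\cup\bigcup_{j\geq1}\Supp\rH^{j}(\RHom_S(\cK,S))\). This is a finite union, since both complexes are bounded by \ref{item:M2}. By Theorem~\ref{thm:C}, each \(\Supp\rH^i(\cK)\) with \(i\geq1\) has codimension \(\geq2\). By Remark~\ref{rem:dual-codimension} applied to \(\bbD P\) (so that \(\cK'=\RHom_S(\widehat{\FM}_A(P),S)\)), the same holds for each \(\Supp\rH^j(\cK')\) with \(j\geq1\). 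Hence \(Z\) is closed of codimension at least \(2\).

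Off \(Z\), both \(\cK_{\mathfrak p}\) and \(\RHom_{S_{\mathfrak p}}(\cK_{\mathfrak p},S_{\mathfrak p})\) are concentrated in degree \(0\) by (1). Let \(N=\rH^0(\cK_{\mathfrak p})\). Then \(\cK_{\mathfrak p}\simeq N\) is perfect over the regular local ring \(S_{\mathfrak p}\). Moreover \(\RHom(N,S_{\mathfrak p})\) has no higher cohomology, so \(\mathrm{Ext}^{j}(N,S_{\mathfrak p})=0\) for \(j\geq1\). A module of finite projective dimension \(p\) has \(\mathrm{Ext}^p(N,S_{\mathfrak p})\neq0\) (use the last map of a minimal free resolution and Nakayama). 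Hence \(p=0\) and \(N\) is free. Thus \(\cK\) is locally free in degree \(0\) on \(\Spec S\setminus Z\), which is open and nonempty since \(\dim S=2g\geq2\).

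Finally, the rank is constant because \(\Spec S\) is irreducible. It equals the Euler characteristic of \(\cK\otimes^L_S\kappa(\eta)\) at the generic point \(\eta\). For a perfect complex this Euler characteristic is locally constant on \(\Spec S\), so it may be computed at the closed point: there it is \(\chi(R\Gamma(A,P))=\chi(A,P)\) by \ref{item:M1} at \(m=1\). A rank is nonnegative, which gives \(\chi(A,P)\geq0\). The main obstacle is the local freeness step, namely the Ext/projective-dimension argument. Everything else is bookkeeping with results already in hand.
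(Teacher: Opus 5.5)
Your proof is correct and follows the paper's argument: for (1) you apply the conclusion of the proof of Theorem~\ref{thm:B} to $\bbD P$ (equivalently, Remark~\ref{rem:dual-codimension} plus \ref{item:M3}) and combine Theorem~\ref{thm:C} with \ref{item:M3}, and for (2) you use the same closed set $Z$ and compute the rank via \ref{item:M1} at $m=1$. Your minimal-resolution and Ext argument just spells out the paper's one-line claim that a perfect complex which, together with its dual, is concentrated in degree $0$ is locally free; the opening phrase about primes of height $\leq 1$ is superfluous, since the argument runs at every prime off $Z$.
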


\begin{proof}
(1) Theorem~\ref{thm:C} and the criterion \ref{item:M3} give
\(\RHom_S(\cK,S)\in D^{\geq0}(S)\). Applying Remark~\ref{rem:dual-codimension} to
\(\bbD P\) and using the same criterion gives \(\cK\in D^{\geq0}(S)\).

(2) Let \(Z\) be the union of the supports of \(\rH^{i}(\cK)\) and of \(\rH^{i}(\RHom_S(\cK,S))\)
for \(i\geq1\), of codimension at least \(2\) by Theorem~\ref{thm:C} and
Remark~\ref{rem:dual-codimension} as applied in (1). Since \(\cK\) is perfect by
Lemma~\ref{lem:FM-finiteness} and both \(\cK\) and its dual are concentrated in degree \(0\) on
\(U=\Spec S\setminus Z\), the restriction \(\cK|_U\) is a locally free module. Its rank is the Euler
characteristic of \(\cK\), which equals \(\chi(A,P)\) by \ref{item:M1} at \(m=1\).
\end{proof}

Let \(\kappa\) be a finite extension of \(\Fl\) and let \(S_{\kappa}=\kappa[[\Tl A]]\). The constructions and
isomorphisms of \S\ref{sec:mellin}, including Lemmas~\ref{lem:finite-level}
and~\ref{lem:FM-finiteness} and the cofinality of the ideals \(I_m\) with the powers of the augmentation
ideal, apply verbatim over the finite field \(\kappa\) and define \(\widehat{\FM}_A(Q)\in D(S_{\kappa})\),
the completed stalk at the trivial character of the \(\kappa\)-linear Fourier--Mellin transform of
\(Q\), for \(Q\in\Dbc(A,\kappa)\). The following corollary of Theorem~\ref{thm:C} is used in
\S\ref{sec:ell-adic} to obtain the \(\ell\)-adic estimates.

\begin{cor}\label{cor:kappa-codim}
Let \(\kappa\) be a finite extension of \(\Fl\) and let \(Q\in\Perv(A,\kappa)\). Then
\[
  \codim_{\Spec S_{\kappa}}\ \Supp\ \rH^{i}\bigl(\widehat{\FM}_A(Q)\bigr)\ \geq\ 2i
  \qquad\text{for all } i\geq 0.
\]
\end{cor}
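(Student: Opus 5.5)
The plan is to deduce Corollary~\ref{cor:kappa-codim} from Theorem~\ref{thm:C} by restriction of scalars along \(\Fl\subseteq\kappa\), rather than rerunning the whole argument over \(\kappa\). Rerunning would also work, since every step of \S\ref{sec:chi} and \S\ref{sec:proofs} uses only that the coefficient field is finite of characteristic \(\ell\). Let \(d=[\kappa:\Fl]\) and let \(Q_0\) denote \(Q\) regarded as an \(\Fl\)-sheaf via the forgetful functor \(\Dbc(A,\kappa)\to\Dbc(A,\Fl)\). This functor is exact and commutes with the six operations. It preserves the perverse \(t\)-structure, because perversity is tested on stalks and costalks, and the costalks \(i_x^!\) are compatible with forgetting the \(\kappa\)-structure. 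Hence \(Q_0\in\Perv(A,\Fl)\), and Theorem~\ref{thm:C} gives \(\codim_{\Spec S}\Supp\rH^{i}(\widehat{\FM}_A(Q_0))\geq2i\).

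The key step is to compare the two transforms. I claim that \(\widehat{\FM}_A(Q)\), viewed as an \(S\)-complex via \(S\to S_\kappa=S\otimes_{\Fl}\kappa\), is isomorphic to \(\widehat{\FM}_A(Q_0)\). At finite level, \(L_{S_\kappa/\frm^N S_\kappa}=L_{S/\frm^N}\otimes_{\Fl}\kappa\). Therefore
\[
  Q\otimes_{\kappa}L_{S_\kappa/\frm^NS_\kappa}\ \simeq\ Q_0\otimes_{\Fl}L_{S/\frm^N}
\]
as sheaves of \(S/\frm^N\)-modules. Applying \(R\Gamma\) and \(R\varprojlim\) gives the isomorphism, using that \(\frm S_\kappa\) is the maximal ideal of \(S_\kappa\), so the two inverse systems are indexed identically.

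To compare supports, note that \(S\to S_\kappa\) is finite free, since \(S_\kappa\simeq S^{\oplus d}\). For a finite \(S_\kappa\)-module \(M\), restricting scalars to \(S\) preserves Krull dimension of support: \(\dim\Supp_S M=\dim\Supp_{S_\kappa}M\), because \(\Spec S_\kappa\to\Spec S\) is finite and dimensions of annihilator quotients agree under integral extensions. Both rings have dimension \(2g\), so codimension equals \(2g-\dim\Supp\) on each side. With \(M=\rH^{i}(\widehat{\FM}_A(Q))\cong\rH^{i}(\widehat{\FM}_A(Q_0))\) as \(S\)-modules, the estimate transfers.

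The main obstacle is making the identification of the transforms natural and checking that forgetting \(\kappa\) preserves perversity. Both are routine, and I would state them with a brief justification.

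A fallback avoids the comparison entirely by checking that Theorem~\ref{thm:A} holds over \(\kappa\). Indeed \(\dim_{\kappa}\) of cohomology equals \(d^{-1}\dim_{\Fl}\) of the cohomology of \(Q_0\), so the bound follows from Theorem~\ref{thm:A} for \(Q_0\). One then applies Proposition~\ref{prop:HS-machine} verbatim over \(S_\kappa\simeq\kappa[[x_1,\dots,x_{2g}]]\), with lengths computed over \(S_\kappa\).
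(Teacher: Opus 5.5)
Your proposal is correct and follows essentially the paper's argument. You regard \(Q\) as a perverse \(\Fl\)-sheaf, identify \(\widehat{\FM}_A(Q)\) as an \(S\)-complex with the \(\Fl\)-linear transform via \(\Ldot_{S_\kappa}=\Ldot_{S}\otimes_{\Fl}\kappa\), apply Theorem~\ref{thm:C}, and transfer the estimate along the finite free extension \(S\to S_\kappa\). The only difference is cosmetic: you compare dimensions of supports under the integral extension, using that both rings are complete regular local of dimension \(2g\), whereas the paper compares heights of primes contracted along \(S\to S_\kappa\).
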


\begin{proof}
Let \(S_{0}=\Fl[[\Tl A]]\) and regard \(Q\) also as a perverse sheaf with \(\Fl\)-coefficients.
The extension \(S_{0}\to S_{\kappa}=S_{0}\otimes_{\Fl}\kappa\) is finite free, and
\(\Ldot_{S_{\kappa}}=\Ldot_{S_{0}}\otimes_{\Fl}\kappa\) level by level. Hence
\[
  Q\otimes_{\kappa}\Ldot_{S_{\kappa}}
  \ \simeq\
  Q\otimes_{\Fl}\Ldot_{S_{0}}
\]
as sheaves of \(S_{\kappa}\)-modules. Taking \(R\Gamma\) shows that \(\widehat{\FM}_A(Q)\), viewed as an
\(S_{0}\)-complex, is the completed Fourier--Mellin transform of the perverse \(\Fl\)-sheaf \(Q\).
Theorem~\ref{thm:C} therefore gives
\(\codim_{\Spec S_{0}}\Supp\rH^{i}(\widehat{\FM}_A(Q))\geq 2i\) for \(i\geq0\). Every prime of
\(S_{\kappa}\) contracts to a prime of \(S_{0}\) of the same height, since \(S_{0}\to
S_{\kappa}\) is finite free, and the support over \(S_{0}\) is the image of the support
over \(S_{\kappa}\). The estimate over \(S_{\kappa}\) follows.
\end{proof}

\begin{rem}\label{rem:characteristic}
The proofs of Theorems~\ref{thm:A}, \ref{thm:B} and~\ref{thm:C} apply in every characteristic
different from \(\ell\). Over \(\bbC\), this gives a proof of Theorem~\ref{thm:B} independent of
Artin vanishing on the Stein universal cover used in \cite[Proposition~2.7]{BhattSchnellScholze2018}.
\end{rem}

\section{The Fourier--Mellin transform on character varieties}
\label{sec:ell-adic}

In \S\S\ref{sec:integral-lattices} and~\ref{sec:algebraic-model}, let \(E\) be a finite extension of
\(\Ql\) inside an algebraic closure \(\overline{\bbQ}_{\ell}\), with ring of integers \(\cO\),
uniformizer \(\varpi\) and residue field \(\kappa\), and let
\[
  R=\cO[[\Tl A]],\qquad
  \overline R=R/\varpi R=\kappa[[\Tl A]]=S_{\kappa},\qquad
  \mathfrak R=R\otimes_{\cO}\overline{\bbQ}_{\ell}.
\]
A \(\Zl\)-basis \(\gamma_1,\ldots,\gamma_{2g}\) of \(\Tl A\) identifies \(R\) with
\(\cO[[x_{1},\ldots,x_{2g}]]\) via \([\gamma_{j}]=1+x_{j}\). We use the middle perverse \(t\)-structure
for \(\cO\)-coefficients.

In this section we prove Corollary~\ref{cor:D1} by applying Theorem~\ref{thm:C} to the modular
reduction of a torsion-free integral form.

\subsection{Integral forms and specialization}
\label{sec:integral-lattices}

We include the proof of the following standard lemma for ease of reference.

\begin{lem}\label{lem:special-generic-codim}
Let \(\cO\) be a complete discrete valuation ring with uniformizer \(\varpi\), and let
\[
  R=\cO[[x_1,\ldots,x_d]],\qquad \overline R=R/\varpi R.
\]
Let \(K\in D^b_{\mathrm{coh}}(R)\), fix \(i\in\bbZ\), and suppose that
\[
  \codim_{\Spec\overline R}\Supp
  \rH^i\bigl(K\otimes_R^L\overline R\bigr)\ \geq\ c.
\]
Then
\[
  \codim_{\Spec R}\Supp\rH^i(K)\ \geq\ c
  \quad\text{and}\quad
  \codim_{\Spec R[1/\varpi]}\Supp\rH^i\bigl(K[1/\varpi]\bigr)\ \geq\ c.
\]
Moreover, every component of \(\Supp\rH^i(K)\) contained in the special fibre
\(\Spec\overline R\) has codimension at least \(c+1\) in \(\Spec R\).
\end{lem}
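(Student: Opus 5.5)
The plan is to work locally at primes of \(R\) and use the short exact sequence relating \(\rH^i(K)\), multiplication by \(\varpi\), and the derived reduction. Write \(M^j=\rH^j(K)\), finite \(R\)-modules. The triangle \(K\xrightarrow{\varpi}K\to K\otimes^L_R\overline R\) gives short exact sequences
\[
  0\to M^i/\varpi M^i\to\rH^i\bigl(K\otimes^L_R\overline R\bigr)\to M^{i+1}[\varpi]\to0 .
\]
In particular \(\Supp_{\overline R}(M^i/\varpi M^i)\subseteq\Supp\rH^i(K\otimes^L_R\overline R)\), which therefore has codimension at least \(c\) in \(\Spec\overline R\).

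First I pass from \(M^i/\varpi M^i\) to \(M^i\). By Nakayama, \(\Supp(M^i/\varpi M^i)=\Supp M^i\cap V(\varpi)\). Let \(\mathfrak p\) be a minimal prime of \(\Supp M^i\), and set \(T=R/\mathfrak p\), a complete local domain, hence catenary.

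\emph{Case 1: \(\varpi\in\mathfrak p\).} Then \(\mathfrak p\) is minimal in \(\Supp(M^i/\varpi M^i)\). Since \(R\) and \(\overline R\) are regular local, hence catenary and equidimensional, we have \(\operatorname{ht}_R\mathfrak p=\operatorname{ht}_{\overline R}(\mathfrak p/\varpi)+1\geq c+1\). This gives both the codimension bound and the final assertion about components in the special fibre.

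\emph{Case 2: \(\varpi\notin\mathfrak p\).} Let \(\mathfrak q\) be a minimal prime of \(\mathfrak p+\varpi R\), a prime of \(\Supp M^i\cap V(\varpi)\). Krull's principal ideal theorem in \(T\) gives \(\operatorname{ht}(\mathfrak q/\mathfrak p)=1\). Since \(R\) is a regular local ring, the dimension formula \(\operatorname{ht}\mathfrak q=\operatorname{ht}\mathfrak p+\operatorname{ht}(\mathfrak q/\mathfrak p)\) holds, because \(\dim R/\mathfrak p+\operatorname{ht}\mathfrak p=\dim R\) in a catenary equidimensional local domain. So \(\operatorname{ht}\mathfrak p=\operatorname{ht}\mathfrak q-1\geq(c+1)-1=c\), using Case 1's height computation for \(\mathfrak q\) in \(\overline R\).

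The one subtlety is making sure a minimal prime \(\mathfrak q\) of \(\mathfrak p+\varpi R\) exists in the support. It does, since \(\mathfrak p+\varpi R\) is a proper ideal: \(T\) is local and \(\varpi\) lies in the maximal ideal. Hence \(\codim_{\Spec R}\Supp M^i\geq c\).

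For the generic fibre, primes of \(R[1/\varpi]\) are the primes of \(R\) not containing \(\varpi\), with the same height. Also \(\Supp\rH^i(K[1/\varpi])=\Supp M^i\cap D(\varpi)\), by flatness of localization. So Case 2 gives the bound directly, codimension being the minimal height over minimal primes of the support.

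I expect the main obstacle to be the dimension bookkeeping: justifying height additivity in \(R=\cO[[x]]\) and \(\overline R\). I would handle it by citing that complete regular local rings are universally catenary and equidimensional. Everything else is formal.
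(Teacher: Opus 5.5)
Your proposal is correct and follows essentially the same route as the paper: the injection \(\rH^i(K)/\varpi\rH^i(K)\hookrightarrow\rH^i(K\otimes^L_R\overline R)\), Nakayama, and a case split on whether \(\varpi\in\mathfrak p\). Primes off the special fibre are handled, as in the paper, by choosing \(\mathfrak q\) minimal over \(\mathfrak p+\varpi R\) and applying Krull's principal ideal theorem together with catenarity of \(R\). The only differences are cosmetic: you restrict to minimal primes of the support, whereas the paper argues for an arbitrary prime in the support, and you record the full short exact sequence but use only its injective part.
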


\begin{proof}
Let \(M=\rH^i(K)\). The reduction triangle gives an injection
\[
  M/\varpi M\ \lhook\joinrel\longrightarrow\
  \rH^i\bigl(K\otimes_R^L\overline R\bigr).
\]
Let \(\mathfrak p\in\Supp M\). If \(\varpi\in\mathfrak p\), then Nakayama's lemma gives
\((M/\varpi M)_{\mathfrak p}\neq0\), and hence
\[
  \operatorname{ht}_{\overline R}(\mathfrak p/\varpi)
  =\operatorname{ht}_R(\mathfrak p)-1\ \geq\ c.
\]
Thus \(\operatorname{ht}_R(\mathfrak p)\geq c+1\).

Suppose that \(\varpi\notin\mathfrak p\), and let \(\mathfrak q\) be minimal over
\(\mathfrak p+(\varpi)\). Then \(M_{\mathfrak q}\neq0\), so Nakayama's lemma and the
injection show that \(\mathfrak q/\varpi\) belongs to the support on the special fibre. Since
\(R/\mathfrak p\) is a domain and the image of \(\varpi\) is nonzero, the principal ideal theorem and
catenarity of \(R\) give
\[
  \operatorname{ht}_R(\mathfrak q)=\operatorname{ht}_R(\mathfrak p)+1,
  \qquad
  \operatorname{ht}_{\overline R}(\mathfrak q/\varpi)
  =\operatorname{ht}_R(\mathfrak p).
\]
It follows that \(\operatorname{ht}_R(\mathfrak p)\geq c\). This proves the inequality over \(R\).
Localization at \(\varpi\) preserves the heights of primes not containing \(\varpi\), which proves the
inequality over \(R[1/\varpi]\).
\end{proof}

We shall use without further mention the fact that every \(P\in\Perv(A,\overline{\bbQ}_{\ell})\)
is defined over a finite extension of \(\Ql\) \cite[2.2.18]{BBDG18}.
A continuous character \(\chi\colon\Tl A\to\overline{\bbQ}_{\ell}^{\times}\) takes values in the finite
extension generated by its values on a \(\Zl\)-basis. Taking a compositum, we may
enlarge \(E\) so that both \(P\) and \(\chi\) are defined over \(E\).

We shall also use the following lemma, proved in Appendix~\ref{sec:integral-perverse}.

\begin{lem}\label{lem:perverse-lattice}
Let \(P_E\in\Perv(A,E)\). There exists \(P_{\cO}\in\Perv(A,\cO)\) with
\(P_{\cO}\otimes_{\cO}E\simeq P_E\) such that multiplication by \(\varpi\) is a monomorphism.
Its modular reduction \(\overline P:=P_{\cO}\otimes_{\cO}^{L}\kappa\) is therefore perverse.
\end{lem}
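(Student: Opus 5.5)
The plan is to construct \(P_{\cO}\) as the image of an integral form of \(P_E\) in a suitable abelian category and then kill torsion. First I choose any object \(F\in\Dbc(A,\cO)\) with \(F\otimes_{\cO}E\simeq P_E\); this exists because \(\Dbc(A,E)\) is by definition (Ekedahl) the idempotent-complete localization of \(\Dbc(A,\cO)\) at \(\varpi\), so every object lifts up to an isomorphism, possibly after multiplying morphisms by powers of \(\varpi\). Since \(-\otimes_{\cO}E\) is \(t\)-exact for the middle perverse \(t\)-structure with \(\cO\)-coefficients, I replace \(F\) by \(F_1={}^p\rH^0(F)\in\Perv(A,\cO)\), which still satisfies \(F_1\otimes_{\cO}E\simeq{}^p\rH^0(P_E)=P_E\).

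Next I remove \(\varpi\)-torsion. In the noetherian abelian category \(\Perv(A,\cO)\) the kernels \(\ker(\varpi^n\colon F_1\to F_1)\) form an increasing chain of subobjects, so they stabilize; call the union \(T\), the \(\varpi^\infty\)-torsion subobject. Put \(P_{\cO}=F_1/T\). Then \(T\otimes_{\cO}E=0\) because \(T\) is killed by some \(\varpi^N\), so \(P_{\cO}\otimes_{\cO}E\simeq P_E\) by exactness of \(\otimes E\). And \(\varpi\) is a monomorphism on \(P_{\cO}\): if \(x\) lies in the kernel of \(\varpi\) on \(F_1/T\), its preimage is killed by \(\varpi^{N+1}\), hence lies in \(T\). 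Noetherianity of \(\Perv(A,\cO)\) (finite length is false, but noetherian holds since constructible \(\cO\)-sheaves have finitely generated stalks and a stratification argument) is the step I expect to need the most care; if it is troublesome, I instead argue directly that \(\ker\varpi^n\) is supported where the stalks of \(F_1\)'s cohomology sheaves have torsion, and that their torsion exponent is bounded on a finite stratification, giving stabilization explicitly.

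Finally, modular reduction: the triangle \(P_{\cO}\xrightarrow{\varpi}P_{\cO}\to \overline P\to\) has first two terms perverse and the first map a monomorphism in the heart, so the cone is the cokernel of \(\varpi\) in \(\Perv(A,\cO)\) and in particular perverse, i.e. \({}^p\rH^{-1}(\overline P)=\ker\varpi=0\) and \({}^p\rH^{j}(\overline P)=0\) for \(j\neq0,-1\). Since the perverse \(t\)-structures for \(\cO\) and \(\kappa\) are compatible with restriction of scalars along \(\cO\to\kappa\) (both defined by the same stalk/costalk conditions), \(\overline P\in\Perv(A,\kappa)\).
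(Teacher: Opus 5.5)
Your proposal is correct and follows the paper's argument essentially step for step. You lift \(P_E\) to \(\Dbc(A,\cO)\) and take \({}^p\rH^0\) using the \(t\)-exactness of \(-\otimes_{\cO}E\). You then pass to the quotient by the \(\varpi^\infty\)-torsion subobject; the paper also obtains this subobject from noetherianity of \(\Perv(A,\cO)\), citing \cite[4.0(b)]{BBDG18}, so your fallback stratification argument is not needed. Finally, you deduce perversity of \(\overline P\) from the perverse cohomology sequence of \(P_{\cO}\xrightarrow{\varpi}P_{\cO}\to\overline P\) and from the compatibility of the stalk and costalk conditions with restriction of scalars along \(\cO\to\kappa\).
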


\subsection{Codimension estimates and generic vanishing}
\label{sec:algebraic-model}

\begin{proof}[Proof of Corollary~\ref{cor:D1}]
Choose a finite extension \(E/\Ql\) and \(P_E\in\Perv(A,E)\) with
\(P_E\otimes_E\overline{\bbQ}_{\ell}\simeq P\). By Lemma~\ref{lem:perverse-lattice}, choose a
torsion-free integral form \(P_{\cO}\) of \(P_E\), whose modular reduction
\(\overline P=P_{\cO}\otimes_{\cO}^{L}\kappa\) is perverse.

Let \(\Ldot_R\) be the rank one \(R\)-local system associated to the canonical character and let
\[
  K_{\cO}=R\Gamma\bigl(A,P_{\cO}\otimes_{\cO}^L\Ldot_R\bigr).
\]
This is the integral Fourier--Mellin transform of \cite[Definition~4.4]{EK21}. By
\cite[Proposition~4.1(1)--(2)]{EK21}, it is an object of
\(D^b_{\mathrm{coh}}(R)\), and derived base change identifies
\[
  K_{\cO}\otimes_R^L\overline R
  \ \simeq\
  R\Gamma\bigl(A,\overline P\otimes_{\kappa}^L\Ldot_{\overline R}\bigr)
  \ =\ \widehat{\FM}_A(\overline P),
\]
the completed stalk at the trivial character of the \(\kappa\)-linear Fourier--Mellin transform of
\(\overline P\), formed over \(\overline R=S_{\kappa}\) as in Corollary~\ref{cor:kappa-codim}. The
equality holds by definition, the reductions of \(\Ldot_R\) modulo \(\varpi\) and the powers of the
maximal ideal being the tautological local systems of (\ref{para:tautological}) with
\(\kappa\)-coefficients. Since
\(\overline P\) is perverse, Corollary~\ref{cor:kappa-codim} gives
\[
  \codim_{\Spec\overline R}\Supp
  \rH^i\bigl(K_{\cO}\otimes_R^L\overline R\bigr)\ \geq\ 2i
  \qquad(i\geq0).
\]
Lemma~\ref{lem:special-generic-codim} yields
\[
  \codim_{\Spec R[1/\varpi]}\Supp
  \rH^i\bigl(K_{\cO}[1/\varpi]\bigr)\ \geq\ 2i.
\]

Let \(R_E=R[1/\varpi]\). The map
\[
  R_E\longrightarrow
  \mathfrak R=R_E\otimes_E\overline{\bbQ}_{\ell}
\]
is faithfully
flat and thus flat base change gives
\[
  \rH^i\bigl(K_{\cO}[1/\varpi]\bigr)\otimes_{R_E}\mathfrak R
  \ \simeq\
  \rH^i\bigl(K_{\cO}\otimes_R^L\mathfrak R\bigr).
\]
If \(\mathfrak q\) belongs to the support of the module on the right and
\(\mathfrak p=\mathfrak q\cap R_E\), then \(\mathfrak p\) belongs to the support over \(R_E\).
Going down gives
\[
  \operatorname{ht}_{\mathfrak R}(\mathfrak q)\ \geq\
  \operatorname{ht}_{R_E}(\mathfrak p)\ \geq\ 2i.
\]
By \cite[Definition~4.4]{EK21},
\[
  K_{\cO}\otimes_R^L\mathfrak R
  \ \simeq\
  \mathfrak{FM}_A(P).
\]
By \cite[Proposition~A.2.2.2 and the proof of Proposition~A.2.2.3]{GabberLoeser1996},
\(\mathfrak R\) is a regular Jacobson domain whose completion at each maximal ideal is isomorphic to
\(\overline{\bbQ}_{\ell}[[x_1,\ldots,x_{2g}]]\). Hence every maximal ideal has height \(2g\), and
codimensions computed on \(\Spec\mathfrak R\) agree with those on
\(\operatorname{Spm}(\mathfrak R)\).
\end{proof}

Recall that \(\operatorname{Spm}(\mathfrak R)\) is identified with the group of continuous characters
\(\Tl A\to\overline{\bbQ}_{\ell}^{\times}\) \cite[\S3.1]{EK21}. The following corollary removes the
arithmeticity hypothesis from \cite[Theorem~1.5]{EK21}.

\begin{cor}\label{cor:jumping-loci}
Let \(P\in\Perv(A,\overline{\bbQ}_{\ell})\). For \(\chi\in\operatorname{Spm}(\mathfrak R)\) let
\(L_{\chi}\) be the rank one \(\overline{\bbQ}_{\ell}\)-local system with monodromy
\(\pi_1(A)\twoheadrightarrow\Tl A\xrightarrow{\ \chi\ }\overline{\bbQ}_{\ell}^{\times}\), and let
\[
  \Sigma^{i}(P)\ =\ \bigl\{\chi\in\operatorname{Spm}(\mathfrak R)\ :\
  \rH^{i}\bigl(A,P\otimes L_{\chi}\bigr)\neq0\bigr\}.
\]
Then \(\codim_{\operatorname{Spm}(\mathfrak R)}\Sigma^{i}(P)\geq2|i|\) for every \(i\in\bbZ\).
\end{cor}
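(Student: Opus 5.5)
We deduce the jump-locus bound from Corollary~\ref{cor:D1}, applied to \(P\) for \(i\geq0\) and to \(\bbD P\) for \(i<0\), via base change from the transform to the fibres at \(\chi\). Enlarge \(E\) so that \(P\) and \(\chi\) are defined over \(E\). The character \(\chi\) then corresponds to a maximal ideal \(\frm_\chi\) of \(\mathfrak R\), with residue field \(\overline{\bbQ}_{\ell}\). By the base change property of the integral transform \cite[Proposition~4.1]{EK21}, inverted and tensored up, we have
\[
  \mathfrak{FM}_A(P)\otimes^L_{\mathfrak R}\mathfrak R/\frm_\chi\ \simeq\ R\Gamma(A,P\otimes L_\chi).
\]
The reason is that the specialization of the canonical character along \(\frm_\chi\) is the monodromy of \(L_\chi\).

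\textbf{Case \(i\geq0\).} Set \(K=\mathfrak{FM}_A(P)\), a perfect complex over the regular ring \(\mathfrak R\). We show \(\Sigma^i(P)\subseteq\bigcup_{j\geq i}\Supp\rH^j(K)\).

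Localize at \(\frm_\chi\). The ring \(\mathfrak R_{\frm_\chi}\) is regular local, so \(K_{\frm_\chi}\) is represented by a minimal bounded complex of finite free modules. Suppose \(\chi\notin\Supp\rH^j(K)\) for all \(j\geq i\). Then \(K_{\frm_\chi}\in D^{\leq i-1}\), and the top nonzero cohomology is preserved by derived tensor with the residue field. Hence \(\rH^j(K\otimes^L\kappa(\chi))=0\) for \(j\geq i\), so \(\chi\notin\Sigma^i(P)\). This is the usual right-exactness argument for the top degree: \(\rH^{\mathrm{top}}(K\otimes^L k)=\rH^{\mathrm{top}}(K)\otimes k\).

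Corollary~\ref{cor:D1} gives \(\codim\Supp\rH^j(K)\geq 2j\geq2i\) on \(\operatorname{Spm}(\mathfrak R)\). Therefore \(\codim\Sigma^i(P)\geq 2i\).

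\textbf{Case \(i<0\).} Use Verdier duality on the finite level:
\[
  \rH^{i}(A,P\otimes L_\chi)^{\vee}\cong\rH^{-i}(A,\bbD P\otimes L_{\chi^{-1}}).
\]
This holds because \(\bbD(P\otimes L_\chi)=\bbD P\otimes L_\chi^{\vee}\) for a local system, and \(L_\chi^\vee=L_{\chi^{-1}}\). Hence \(\Sigma^i(P)=\iota(\Sigma^{-i}(\bbD P))\), where \(\iota\colon\chi\mapsto\chi^{-1}\). The map \(\iota\) is the automorphism of \(\mathfrak R\) induced by \([v]\mapsto[-v]\), so it is a homeomorphism of \(\operatorname{Spm}(\mathfrak R)\) preserving codimension. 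Since \(\bbD P\) is perverse, the first case applies to it and gives the bound \(2|i|\).

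The main point needing care is the base change identification of the derived fibre of \(\mathfrak{FM}_A(P)\) at \(\chi\) with \(R\Gamma(A,P\otimes L_\chi)\). We take it from \cite[Proposition~4.1]{EK21} together with the flatness of \(R_E\to\mathfrak R\) used in the proof of Corollary~\ref{cor:D1}. The identification of \(\operatorname{Spm}\) codimension with \(\Spec\) codimension is already settled at the end of that proof.
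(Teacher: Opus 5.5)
Your proposal is correct and follows the paper's proof step for step. It has the same three ingredients:
\begin{enumerate}
\item the fibre formula \(\mathfrak{FM}_A(P)\otimes^L_{\mathfrak R}\kappa(\chi)\simeq R\Gamma(A,P\otimes L_\chi)\) from \cite[Proposition~4.1]{EK21};
\item the containment \(\Sigma^i(P)\subseteq\bigcup_{j\geq i}\Supp\rH^j(\mathfrak{FM}_A(P))\), obtained by localizing at \(\chi\) and using right \(t\)-exactness of \(-\otimes^L\kappa(\chi)\), combined with Corollary~\ref{cor:D1};
\item Verdier duality together with the inversion automorphism of \(\mathfrak R\) for \(i<0\).
\end{enumerate}
The paper gets the fibre formula by specializing the integral transform along the surjection \(R\to\cO\), \([\gamma]\mapsto\chi(\gamma)\), which exists because \(\chi(\Tl A)\subseteq1+\varpi\cO\), and then extending scalars to \(\overline{\bbQ}_{\ell}\). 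Your minimal-complex remark and your appeal to flatness of \(R_E\to\mathfrak R\) are not needed for this step.
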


\begin{proof}
Let \(i\geq0\) and let \(\chi\in\operatorname{Spm}(\mathfrak R)\), with residue field
\(\kappa(\chi)=\overline{\bbQ}_{\ell}\) \cite[\S3.1]{EK21}. Choose a finite extension \(E/\Ql\) over which
both \(P\) and \(\chi\) are defined, and an integral form \(P_{\cO}\) using
Lemma~\ref{lem:perverse-lattice}. Since \(\chi(\Tl A)\subseteq1+\varpi\cO\), the character induces a surjection
\[
  R=\cO[[\Tl A]]\longrightarrow\cO,\qquad [\gamma]\longmapsto\chi(\gamma).
\]
Applying \cite[Proposition~4.1(2)]{EK21} to the integral transform of \(P_{\cO}\) along this quotient,
then extending scalars from \(\cO\) to \(\overline{\bbQ}_{\ell}\), gives
\begin{equation}\label{eq:ell-adic-character-base-change}
  \mathfrak{FM}_A(P)\otimes^{L}_{\mathfrak R}\kappa(\chi)\ \simeq\
  R\Gamma\bigl(A,P\otimes L_{\chi}\bigr).
\end{equation}
Suppose \(\chi\notin\bigcup_{j\geq i}\Supp\rH^{j}(\mathfrak{FM}_A(P))\), a finite union. Localizing at
\(\chi\) gives \(\mathfrak{FM}_A(P)_{\chi}\in D^{<i}\). Its derived tensor product with
\(\kappa(\chi)\) also lies in \(D^{<i}\), so \(\rH^{i}(A,P\otimes L_{\chi})=0\). Hence
\[
  \Sigma^{i}(P)\ \subseteq\ \bigcup_{j\geq i}\Supp\rH^{j}(\mathfrak{FM}_A(P)),
\]
and Corollary~\ref{cor:D1} gives \(\codim\Sigma^{i}(P)\geq2i\). For \(i<0\), Verdier
duality on the proper \(A\) and the identity
\(\bbD(P\otimes L_{\chi})\simeq\bbD P\otimes L_{\chi^{-1}}\) give
\(\rH^{i}(A,P\otimes L_{\chi})^{\vee}\simeq\rH^{-i}(A,\bbD P\otimes L_{\chi^{-1}})\), so
\(\Sigma^{i}(P)=\operatorname{inv}\bigl(\Sigma^{-i}(\bbD P)\bigr)\), where the inversion
\(\operatorname{inv}\) of \(\operatorname{Spm}(\mathfrak R)\) is induced by a ring automorphism
of \(\mathfrak R\) and preserves codimension. The case \(i\geq0\) applied to \(\bbD P\) thus implies
the result for \(P\) with \(i<0\).

\end{proof}

\begin{proof}[Proof of Corollary~\ref{cor:D3}(2)]
Since \(\mathfrak R\) is regular, \(\mathfrak{FM}_A(P)\) is perfect. By
\eqref{eq:ell-adic-character-base-change}, its fibre at \(\chi\) is \(R\Gamma(A,P\otimes L_{\chi})\).
The Euler characteristic of the fibres is locally constant, hence constant because \(\mathfrak R\)
is a domain, and equals \(\chi(A,P)\) at the trivial character.

Let \(Z=\bigcup_{i\neq0}\overline{\Sigma^i(P)}\), with closures taken in
\(\operatorname{Spm}(\mathfrak R)\). This is a finite union, and Corollary~\ref{cor:jumping-loci}
gives \(\codim Z\geq2\). For \(\chi\notin Z\), only \(\rH^0(A,P\otimes L_{\chi})\) can be nonzero,
and its dimension is \(\chi(A,P)\geq0\).
\end{proof}

We now state and prove an \(\ell\)-adic form of Theorem~\ref{thm:A}.

\begin{cor}\label{cor:ell-adic-A}
Let \(P\in\Perv(A,\overline{\bbQ}_{\ell})\). For every positive integer \(m\) invertible in \(k\)
and every \(i\in\bbZ\),
\[
  \dim_{\overline{\bbQ}_{\ell}}\rH^{i}\bigl(A,[m]^{*}P\bigr)\ \leq\ C_{P}\,m^{2(g-|i|)},
\]
where \(C_{P}\) is a constant independent of \(m\).
\end{cor}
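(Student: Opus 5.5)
Two routes are available. The first reruns the proof of Theorem~\ref{thm:A} with \(\overline{\bbQ}_{\ell}\)-coefficients. The second, which I would follow, reduces to Theorem~\ref{thm:A} by taking an integral form and comparing dimensions through the universal coefficient sequence.

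First choose \(E/\Ql\) finite and \(P_E\in\Perv(A,E)\) with \(P_E\otimes_E\overline{\bbQ}_{\ell}\simeq P\). Lemma~\ref{lem:perverse-lattice} gives a torsion-free integral form \(P_{\cO}\in\Perv(A,\cO)\) whose reduction \(\overline P=P_{\cO}\otimes^L_{\cO}\kappa\) is perverse. Since \([m]\) is \'etale, \([m]^*\) commutes with \(\otimes^L_{\cO}\kappa\) and with \(\otimes_{\cO}E\). The projection formula for the proper map \(A\to\Spec k\) then gives
\[
  R\Gamma\bigl(A,[m]^*P_{\cO}\bigr)\otimes^L_{\cO}\kappa\simeq R\Gamma\bigl(A,[m]^*\overline P\bigr),
  \qquad
  R\Gamma\bigl(A,[m]^*P_{\cO}\bigr)\otimes_{\cO}\overline{\bbQ}_{\ell}\simeq R\Gamma\bigl(A,[m]^*P\bigr).
\]
Here \(R\Gamma(A,[m]^*P_{\cO})\) is a perfect \(\cO\)-complex. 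Each \(N^i=\rH^i(A,[m]^*P_{\cO})\) is a finite \(\cO\)-module, and its rank equals \(\dim_{\overline{\bbQ}_{\ell}}\rH^i(A,[m]^*P)\). The universal coefficient sequence \(0\to N^i\otimes\kappa\to\rH^i(A,[m]^*\overline P)\to\Tor_1^{\cO}(N^{i+1},\kappa)\to0\) gives \(\operatorname{rank}N^i\le\dim_\kappa N^i\otimes\kappa\le\dim_\kappa\rH^i(A,[m]^*\overline P)\).

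It remains to bound \(\dim_\kappa\rH^i(A,[m]^*\overline P)\) by \(C\,m^{2(g-|i|)}\). View \(\overline P\) as a perverse \(\Fl\)-sheaf by restriction of scalars along \(\Fl\subseteq\kappa\); perversity is unaffected because it is a condition on stalks and costalks. Then \(\dim_{\Fl}=[\kappa:\Fl]\dim_\kappa\), and Theorem~\ref{thm:A} applied to \(\overline P\) gives the bound with constant \(C_{\overline P}\). Theorem~\ref{thm:A} is stated for \(m\) invertible in \(k\), not only powers of \(\ell\), so this covers every \(m\) in the corollary. Set \(C_P=C_{\overline P}\); it is independent of \(m\).

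The only delicate point is the choice of integral form. One needs \(\overline P\) to be perverse, not merely to lie in \(\Dbc\), since otherwise Theorem~\ref{thm:A} does not apply. Lemma~\ref{lem:perverse-lattice} supplies exactly this, so no real obstacle remains. The two compatibilities of \(R\Gamma\) with \([m]^*\) and with base change are routine for an \'etale map and a proper morphism.
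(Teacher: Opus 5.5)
Your proposal is correct and follows the paper's proof essentially step for step. Both arguments take a torsion-free integral form \(P_{\cO}\) via Lemma~\ref{lem:perverse-lattice}, base change \(R\Gamma(A,[m]^{*}P_{\cO})\) to \(E\) (or \(\overline{\bbQ}_{\ell}\)) and to \(\kappa\), bound the rank through the injection \(\rH^{i}\otimes_{\cO}\kappa\hookrightarrow\rH^{i}(-\otimes^{L}_{\cO}\kappa)\), and apply Theorem~\ref{thm:A} to \(\overline P\) regarded with \(\Fl\)-coefficients. Your use of the full universal coefficient sequence and the choice \(C_P=C_{\overline P}\) (valid since \(\dim_{\kappa}\leq\dim_{\Fl}\)) are only cosmetic differences.
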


\begin{proof}
Choose a finite extension \(E/\Ql\) and \(P_E\in\Perv(A,E)\) with
\(P_E\otimes_E\overline{\bbQ}_{\ell}\simeq P\). By Lemma~\ref{lem:perverse-lattice}, choose a
torsion-free integral form \(P_{\cO}\) of \(P_E\). Its modular reduction
\(\overline P=P_{\cO}\otimes^{L}_{\cO}\kappa\) is perverse. For \(m\geq1\) invertible in \(k\), let
\(K=R\Gamma(A,[m]^{*}P_{\cO})\). By \cite[Theorems~6.3(iii) and~7.2(i)]{Eke90}, applied with the
coefficient ring \(\cO\) as in the proof of Lemma~\ref{lem:FM-finiteness}, \(K\in D^b_{\mathrm{coh}}(\cO)\),
and \(K\) is perfect since \(\cO\) is regular.

Since \([m]^{*}\) and
\(R\Gamma(A,-)\) commute with the coefficient changes \(\otimes^{L}_{\cO}E\) and
\(\otimes^{L}_{\cO}\kappa\), one has \(K\otimes^{L}_{\cO}E\simeq R\Gamma(A,[m]^{*}P_{E})\) and
\(K\otimes^{L}_{\cO}\kappa\simeq R\Gamma(A,[m]^{*}\overline P)\).
The reduction triangle gives an injection
\(\rH^{i}(K)\otimes_{\cO}\kappa\hookrightarrow\rH^{i}(K\otimes^{L}_{\cO}\kappa)\), so
\[
  \dim_{E}\rH^{i}\bigl(A,[m]^{*}P_{E}\bigr)\ =\ \operatorname{rk}_{\cO}\rH^{i}(K)
  \ \leq\ \dim_{\kappa}\rH^{i}\bigl(A,[m]^{*}\overline P\bigr).
\]
Regard \(\overline P\) as a perverse sheaf with \(\Fl\)-coefficients. Its cohomology dimensions
are multiplied by \([\kappa:\Fl]\), which is independent of \(m\). Applying Theorem~\ref{thm:A}
and dividing by this fixed factor bounds the right-hand side by \(C\,m^{2(g-|i|)}\). Finally,
\(\dim_{\overline{\bbQ}_{\ell}}\rH^{i}(A,[m]^{*}P)=\dim_{E}\rH^{i}(A,[m]^{*}P_{E})\).
\end{proof}

The preceding Betti bounds also bound the number of characters with nonzero twisted cohomology.
For a positive integer \(m\) invertible in \(k\), a field \(\Lambda\), and a character
\(\chi\colon A[m]\to\Lambda^{\times}\), let \(L_{\chi}\) be the rank one \(\Lambda\)-local system on
\(A\) with monodromy \(\pi_{1}(A)\twoheadrightarrow A[m]\xrightarrow{\ \chi\ }\Lambda^{\times}\).

\begin{cor}\label{cor:character-count}
Let \(m\geq1\) be invertible in \(k\) and let \(i\in\bbZ\).
\begin{enumerate}
\item For \(P\in\Perv(A,\overline{\bbQ}_{\ell})\),
\[
  \#\bigl\{\chi\in\Hom(A[m],\overline{\bbQ}_{\ell}^{\times})\ :\ \rH^{i}(A,P\otimes L_{\chi})\neq0\bigr\}
  \ \leq\ \dim_{\overline{\bbQ}_{\ell}}\rH^{i}\bigl(A,[m]^{*}P\bigr)\ \leq\ C_{P}\,m^{2(g-|i|)}.
\]
\item Let \(m\) be prime to \(\ell\), let \(\kappa\) be a finite extension of \(\Fl\) containing the
\(m\)-th roots of unity, and let \(P\in\Perv(A,\kappa)\). Then
\[
  \#\bigl\{\chi\in\Hom(A[m],\kappa^{\times})\ :\ \rH^{i}(A,P\otimes_{\kappa}L_{\chi})\neq0\bigr\}
  \ \leq\ \dim_{\kappa}\rH^{i}\bigl(A,[m]^{*}P\bigr)\ \leq\ C_{P}\,m^{2(g-|i|)}.
\]
\end{enumerate}
\end{cor}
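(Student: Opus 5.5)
The plan is to decompose the pushforward \([m]_*[m]^*P\simeq P\otimes[m]_*\Lambda\) into rank one twists. The second inequality in each part is then Corollary~\ref{cor:ell-adic-A} in case (1) and Theorem~\ref{thm:A} in case (2).

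For (2), Theorem~\ref{thm:A} is stated for \(\Fl\)-coefficients. We regard \(P\) as an \(\Fl\)-perverse sheaf, which multiplies dimensions by \([\kappa:\Fl]\), a factor independent of \(m\). We should also remark that Theorem~\ref{thm:A} holds for any \(m\) invertible in \(k\), not only for powers of \(\ell\).

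\emph{Splitting the regular representation.} The map \([m]\colon A\to A\) is a Galois cover with group \(A[m]\), and \([m]_*\Lambda_A\) is the local system attached to the regular representation \(\Lambda[A[m]]\) of \(\pi_1(A)\) through \(\pi_1(A)\twoheadrightarrow A[m]\). This is the argument of Lemma~\ref{lem:finite-level}, which does not use that \(m\) is a power of \(\ell\).

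In case (1), \(\Lambda=\overline{\bbQ}_{\ell}\) has characteristic zero and contains the \(m\)-th roots of unity. In case (2), \(m\) is prime to \(\ell\) and \(\kappa\) contains \(\mu_m\). In both cases \(|A[m]|\) is invertible in \(\Lambda\) and \(\Lambda\) contains all values of characters of the abelian group \(A[m]\cong(\bbZ/m)^{2g}\). Hence the group algebra \(\Lambda[A[m]]\) is split semisimple:
\[
\Lambda[A[m]]\simeq\bigoplus_{\chi\in\Hom(A[m],\Lambda^{\times})}\Lambda_\chi .
\]
Therefore \([m]_*\Lambda_A\simeq\bigoplus_\chi L_\chi\).

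\emph{Projection formula.} Since \([m]\) is finite, the projection formula gives
\[
R\Gamma(A,[m]^*P)\simeq R\Gamma(A,P\otimes[m]_*\Lambda)\simeq\bigoplus_\chi R\Gamma(A,P\otimes L_\chi).
\]
Taking \(\rH^i\), we get
\[
\dim\rH^i(A,[m]^*P)=\sum_\chi\dim\rH^i(A,P\otimes L_\chi).
\]
Each nonzero summand has dimension at least \(1\), so the number of \(\chi\) with \(\rH^i(A,P\otimes L_\chi)\neq0\) is at most \(\dim\rH^i(A,[m]^*P)\). This gives the first inequality in both parts.

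\emph{Case (1) and the main obstacle.} For \(\overline{\bbQ}_{\ell}\)-coefficients, the splitting and the projection formula should be carried out at the level of \(E\)-sheaves. We take \(E\) to contain \(\mu_m\), so that \(P\) and every \(\chi\) are defined over \(E\), and then extend scalars. This bookkeeping is the only point that needs care. The rest is formal, since the needed semisimplicity is exactly what fails for \(\ell\mid m\) with \(\Fl\)-coefficients, and that case is excluded by the hypotheses.
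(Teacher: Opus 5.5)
Your proposal is correct and is essentially the paper's proof. You split \([m]_{*}\Lambda_A\simeq\bigoplus_{\chi}L_{\chi}\) using that \(\Lambda[A[m]]\) is split semisimple, and apply the projection formula to write \(\rH^{i}(A,[m]^{*}P)\simeq\bigoplus_{\chi}\rH^{i}(A,P\otimes L_{\chi})\). You then get the second inequality from Corollary~\ref{cor:ell-adic-A} in (1), and in (2) from Theorem~\ref{thm:A} applied to \(P\) viewed with \(\Fl\)-coefficients. The only cosmetic difference is that in case (1) you descend to a finite extension \(E\) containing \(\mu_m\), whereas the paper works with \(\overline{\bbQ}_{\ell}\)-coefficients directly.
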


\begin{proof}
Let \(\Lambda\) be \(\overline{\bbQ}_{\ell}\) in (1) and \(\kappa\) in (2). In both cases \(m\) is
invertible in \(\Lambda\) and \(\Lambda\) contains the \(m\)-th roots of unity, so the characters
\(\chi\in\Hom(A[m],\Lambda^{\times})\) identify \(\Lambda[A[m]]\) with the product of \(m^{2g}\)
copies of \(\Lambda\). The \'etale \([m]\) satisfies
\([m]_{*}[m]^{*}P\simeq P\otimes_{\Lambda}[m]_{*}\Lambda_{A}\) by the projection formula, and
\([m]_{*}\Lambda_{A}\) is the rank one \(\Lambda[A[m]]\)-local system of the torsor \([m]\).
The character decomposition gives \([m]_{*}\Lambda_{A}\simeq\bigoplus_{\chi}L_{\chi}\). Taking cohomology,
\[
  \rH^{i}\bigl(A,[m]^{*}P\bigr)\ \simeq\ \bigoplus_{\chi\in\Hom(A[m],\Lambda^{\times})}
  \rH^{i}\bigl(A,P\otimes_{\Lambda}L_{\chi}\bigr),
\]
which gives the first inequality in both cases. For the second, apply Corollary~\ref{cor:ell-adic-A}
in (1), and in (2) apply Theorem~\ref{thm:A} to \(P\) regarded as a perverse sheaf with
\(\Fl\)-coefficients.
\end{proof}

For \(i\neq0\) the characters of \(A[m]\) with \(\rH^{i}(A,P\otimes L_{\chi})\neq0\) thus have
density at most \(C_{P}m^{-2|i|}\) among all \(m^{2g}\) characters. This is a counting analogue of
the codimension bound \(2|i|\) of Corollary~\ref{cor:jumping-loci}, and it is available for
\(\Fl\)-coefficients.

\begin{rem}\label{cor:betti-torus}
Let \(A\) be an abelian variety over \(\bbC\), let \(\kappa\) be a finite field of characteristic
\(\ell\), and let \(P\in\Perv(A,\kappa)\). The codimension estimates of
\cite[Theorems~3.1 and~3.6]{BhattSchnellScholze2018} hold for the Betti Fourier--Mellin transform
of the analytification of \(P\) on \(\Spec\kappa[\pi_1(A(\bbC),0)]\). The supports of its \(i\)-th
cohomology sheaves have codimension at least \(2i\) for \(i\geq0\), and its cohomology jump loci
in degree \(i\) have codimension at least \(2|i|\) for every \(i\in\bbZ\).

Indeed, the character at every closed point has finite order. After extension to its residue field
\(\kappa'\), the completion of the Betti transform at this point is \(\widehat{\FM}_A(P')\) over
\(\kappa'[[\Tl A]]\), where \(P'\) is obtained from \(P\) by extension of coefficients and twisting
by the associated \'etale local system. This follows by comparison at each finite quotient
\cite[Expos\'e~XI, Th\'eor\`eme~4.4(i) and Expos\'e~XVI, Th\'eor\`eme~4.1]{ArtinGrothendieckSGA4}.
Corollary~\ref{cor:kappa-codim} therefore gives the support estimates, since codimension can be
checked after completion at closed points. The estimates for the jump loci follow from the fibre
formula \cite[Lemma~2.6]{BhattSchnellScholze2018} and Verdier duality, as in the proof of
Corollary~\ref{cor:jumping-loci}.
\end{rem}

\begin{rem}\label{rem:integral-uniform}
With integral coefficients the uniform vanishing of Theorem~\ref{thm:B} becomes a uniform divisibility. Let
\(P_{\cO}\in\Perv(A,\cO)\) be an integral form as in Lemma~\ref{lem:perverse-lattice}, with perverse
reduction \(\overline P\), regarded as a perverse sheaf with \(\Fl\)-coefficients. By
Corollary~\ref{cor:connectivity}(1) for \(\bbD\overline P\) and Lemma~\ref{lem:tower-vanishing}, there is
\(e\geq0\) such that \([\ell^{e}]^{*}\) is zero on \(\rH^{i}(A,[\ell^{n}]^{*}\overline P)\) for all \(n\geq0\)
and \(i>0\). The long exact sequence of the triangle \(P_{\cO}\xrightarrow{\varpi}P_{\cO}\to\overline P\)
gives injections \(\rH^{i}(A,[m]^{*}P_{\cO})\otimes_{\cO}\kappa\hookrightarrow\rH^{i}(A,[m]^{*}\overline P)\)
compatible with the pullback maps. For \(x\in\rH^{i}(A,[\ell^{n}]^{*}P_{\cO})\) with \(i>0\), the image of
\([\ell^{e}]^{*}x\) in \(\rH^{i}(A,[\ell^{n+e}]^{*}\overline P)\) vanishes, so
\([\ell^{e}]^{*}x\in\varpi\rH^{i}(A,[\ell^{n+e}]^{*}P_{\cO})\), and by induction on \(r\)
\[
  [\ell^{re}]^{*}\,\rH^{i}\bigl(A,[\ell^{n}]^{*}P_{\cO}\bigr)\ \subseteq\
  \varpi^{r}\,\rH^{i}\bigl(A,[\ell^{n+re}]^{*}P_{\cO}\bigr)
  \qquad\text{for all } n,r\geq0 \text{ and } i>0.
\]
\end{rem}

\appendix
\section{Integral perverse sheaves}
\label{sec:integral-perverse}

Let \(E/\Ql\) be finite, with ring of integers \(\cO\), uniformizer \(\varpi\) and residue field
\(\kappa\), and let \(X\) be a separated scheme of finite type over \(k\). Constructible \(\cO\)-coefficients
are those of the adic formalism of \cite[2.2.14 and 2.2.18]{BBDG18}, which applies over the
algebraically closed field \(k\). In this formalism
\(\Dbc(X,E)=\Dbc(X,\cO)\otimes_{\cO}E\), the category
\(\Dbc(X,\overline{\bbQ}_{\ell})\) is the filtered \(2\)-colimit of the categories \(\Dbc(X,E)\),
and the perverse \(t\)-structures are compatible with these constructions.

Over \(\cO\), the
perverse \(t\)-structure is the one defined by the support and cosupport conditions of the middle
perversity, the \(p\)-structure of \cite[\S2.1]{Jut09}. Its dual \(p_+\)-structure is not used in this
article. In particular every \(P\in\Perv(X,\overline{\bbQ}_{\ell})\) arises from some
\(P_E\in\Perv(X,E)\), and every \(P_E\) arises from some \(Q\in\Dbc(X,\cO)\) with
\(Q\otimes_{\cO}^{L}E\simeq P_E\) \cite[2.2.18]{BBDG18}.

Following \cite[Definition~2.10]{Jut09}, an object \(N\in\Perv(X,\cO)\) is \emph{torsion} if
\(\varpi^{\nu}\cdot\mathrm{id}_N=0\) for some \(\nu\), and \emph{torsion-free} if
\(\varpi\cdot\mathrm{id}_N\) is a monomorphism. The next lemma gives the torsion subobject needed
to construct the torsion-free integral form in Lemma~\ref{lem:perverse-lattice}.

\begin{lem}\label{lem:torsion-lattice}
Let \(X\) be a separated scheme of finite type over \(k\) and let \(N\in\Perv(X,\cO)\).
\begin{enumerate}
\item If \(N\) is torsion-free, then \(N\otimes^{L}_{\cO}\kappa\) lies in \(\Perv(X,\kappa)\) and equals
the cokernel of \(\varpi\cdot\mathrm{id}_{N}\) in \(\Perv(X,\cO)\).
\item The ascending chain of perverse subobjects \(\ker(\varpi^{\nu}\cdot\mathrm{id}_{N})\)
stabilizes. Consequently \(N\) has a greatest torsion subobject \(T\subseteq N\), the quotient
\(N/T\) is torsion-free, and \((N/T)\otimes^{L}_{\cO}E\simeq N\otimes^{L}_{\cO}E\).
\end{enumerate}
\end{lem}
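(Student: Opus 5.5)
Part (1) will be handled by the distinguished triangle \(N\xrightarrow{\varpi}N\to N\otimes^{L}_{\cO}\kappa\xrightarrow{+1}\) in \(\Dbc(X,\cO)\). Since \(N\) is perverse, the cone of \(\varpi\cdot\mathrm{id}_N\) lies in \({}^{p}D^{[-1,0]}\), and its perverse cohomologies are \({}^{p}\rH^{-1}=\ker(\varpi)\) and \({}^{p}\rH^{0}=\operatorname{coker}(\varpi)\), computed in the abelian category \(\Perv(X,\cO)\). The general fact used here is that in a \(t\)-structure the cone of a morphism between objects of the heart has these two cohomologies. Torsion-freeness kills \({}^{p}\rH^{-1}\), so \(N\otimes^{L}_{\cO}\kappa\simeq\operatorname{coker}(\varpi)\) is perverse over \(\cO\). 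To conclude it lies in \(\Perv(X,\kappa)\), I would note that the functor \(\Dbc(X,\kappa)\to\Dbc(X,\cO)\) is \(t\)-exact for the middle perversity, since the support and cosupport conditions are tested on stalks and costalks and are insensitive to viewing \(\kappa\)-modules as \(\cO\)-modules. It is also conservative on the heart, which gives the claim.

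For part (2), I would use that \(\Perv(X,\cO)\) is noetherian. Every perverse sheaf has finite length over a field, and over \(\cO\) the category is noetherian because objects are constructible with finitely generated stalks. To make this concrete, I would argue via \(E\)- and \(\kappa\)-reductions. Put \(K_\nu=\ker(\varpi^\nu)\subseteq N\). These form an ascending chain of subobjects. Each \(K_\nu\) is killed by \(\varpi^\nu\), and \(K_{\nu+1}/K_\nu\) embeds via \(\varpi^{\nu}\) into \(K_1\). The obvious candidate invariant is the length of \(K_\nu\) as a torsion perverse sheaf. Torsion perverse \(\cO\)-sheaves have a finite filtration with \(\varpi\)-killed, hence \(\kappa\)-perverse, graded pieces of finite length, so each \(K_\nu\) has finite length. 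I would bound this length uniformly in \(\nu\) using constructibility. Choose a stratification on which all \(\cH^j(N)\) are lisse. The stalk of \(\cH^j(K_\nu)\) is then controlled by the \(\varpi\)-power torsion of finitely generated \(\cO\)-modules, namely the stalks of \(\cH^{j}(N)\) and \(\cH^{j-1}(N)\), and this torsion is annihilated by a fixed \(\varpi^{\nu_0}\). Chasing the long exact sequence of \(K_\nu\to N\xrightarrow{\varpi^\nu}N\) should then show \(\varpi^{2\nu_0}K_\nu=0\) for all \(\nu\), so \(K_\nu=K_{2\nu_0}\) eventually.

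With stabilization in hand, put \(T=K_{\nu_1}\). It contains every torsion subobject, since any subobject killed by \(\varpi^\nu\) lies in \(K_\nu\subseteq T\). The quotient \(N/T\) is torsion-free: if \(\varpi\) kills a subobject of \(N/T\), its preimage is killed by \(\varpi^{\nu_1+1}\) and hence lies in \(T\). Finally, \(T\otimes^{L}_{\cO}E=0\) because \(\varpi\) is invertible after inverting it, and \(\varpi^{\nu_1}\) kills \(T\). The triangle \(T\to N\to N/T\) therefore gives \((N/T)\otimes^{L}_{\cO}E\simeq N\otimes^{L}_{\cO}E\).

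The main obstacle is the stabilization step, which needs the uniform annihilation bound extracted from stalks. If the stalk chase becomes messy, my fallback is to cite noetherianity of \(\Perv(X,\cO)\) from Juteau's work.
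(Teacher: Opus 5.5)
Your part (1), and the deductions in part (2) once the chain has stabilized (maximality of \(T\), torsion-freeness of \(N/T\), \(T\otimes^{L}_{\cO}E=0\)), are the paper's argument. For the stabilization itself the paper only cites noetherianity of \(\Perv(X,\cO)\) \cite[4.0(b)]{BBDG18}, which is your fallback, and you should simply do that. The stalk chase you propose as the main route aims at a false statement. Nor is ``noetherian because objects have finitely generated stalks'' a valid reason, since perverse subobjects are not detected on stalks.

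Concretely, torsion in the stalks of the \(\cH^{j}(N)\) does not bound the exponent of perverse torsion. Let \(X\) be a smooth curve, \(0\in X\) a closed point, \(a\geq1\), and \(\mathcal{F}=\ker\bigl(\cO_X\to i_{0*}\cO/\varpi^{a}\bigr)\). Rotating this short exact sequence gives a triangle
\[
  i_{0*}\cO/\varpi^{a}\longrightarrow\mathcal{F}[1]\longrightarrow\cO_X[1]\xrightarrow{\ +1\ }
\]
whose three terms are perverse. For \(\mathcal{F}[1]\) the only non-obvious condition is \(\Gamma_{\{0\}}\mathcal{F}\subseteq\Gamma_{\{0\}}\cO_X=0\). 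So \(N=\mathcal{F}[1]\) has torsion part \(T=i_{0*}\cO/\varpi^{a}\) with torsion-free quotient \(\cO_X[1]\). Hence \(K_a=T\) and \(K_{a-1}\subsetneq K_a\).

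Yet every stalk of \(\cH^{*}(N)\) is \(0\) or \(\cO\), and the lisse stratification \(\{0\}\sqcup(X\setminus0)\) does not depend on \(a\). So any \(\nu_0\) read off from stalks is independent of \(a\), and \(\varpi^{2\nu_0}K_\nu=0\) fails as soon as \(a>2\nu_0\). The torsion is visible only in the costalk, \(\cH^{1}(i_0^{!}\mathcal{F})=\cO/\varpi^{a}\), that is, in the cokernel of the specialization map \(\mathcal{F}_0\to\mathcal{F}_{\bar\eta}\).

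Your observations that \(K_{\nu+1}/K_\nu\hookrightarrow K_1\) via \(\varpi^{\nu}\) and that \(K_1\) has finite length are correct. However, they only show that the descending chain \(\varpi^{\nu}K_{\nu+1}\subseteq K_1\) stabilizes. Showing that its stable value is \(0\) is exactly where an input of noetherian type is needed. Any correct hands-on proof would have to use costalks and gluing across strata, which amounts to reproving the cited noetherianity.
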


\begin{proof}
(1) The triangle \(N\xrightarrow{\varpi}N\to N\otimes^{L}_{\cO}\kappa\) gives the exact sequence of
perverse cohomology
\[
  0\to{}^{p}\!H^{-1}\bigl(N\otimes^{L}_{\cO}\kappa\bigr)\to N\xrightarrow{\ \varpi\ }N
  \to{}^{p}\!H^{0}\bigl(N\otimes^{L}_{\cO}\kappa\bigr)\to0.
\]
If \(\varpi\cdot\mathrm{id}_{N}\) is a monomorphism, the first term vanishes, so the reduction is
perverse in \(\Dbc(X,\cO)\) and equals the cokernel. The complex \(N\otimes^{L}_{\cO}\kappa\) is
naturally an object of \(\Dbc(X,\kappa)\), and restriction of scalars along \(\cO\to\kappa\) commutes
with \(i_{x}^{*}\) and \(i_{x}^{!}\) and preserves the vanishing of cohomology sheaves. Since the
perverse \(t\)-structures of \(\Dbc(X,\cO)\) and \(\Dbc(X,\kappa)\) are both defined by the vanishing of
\(\cH^{n}i_{x}^{*}\) for \(n>-\dim\overline{\{x\}}\) and of \(\cH^{n}i_{x}^{!}\) for
\(n<-\dim\overline{\{x\}}\) at the points \(x\) of \(X\) \cite[\S2.1]{Jut09}, it follows that
\(N\otimes^{L}_{\cO}\kappa\in\Perv(X,\kappa)\).

(2) The category \(\Perv(X,\cO)\) is noetherian \cite[4.0(b)]{BBDG18}, so the ascending chain
\(K_{\nu}:=\ker(\varpi^{\nu}\cdot\mathrm{id}_{N})\) stabilizes. For large \(\nu\), \(T:=K_{\nu}\) contains
every torsion subobject, the kernel of \(\varpi\cdot\mathrm{id}_{N/T}\) is \(K_{\nu+1}/K_{\nu}=0\), and
\(T\otimes^{L}_{\cO}E=0\).
\end{proof}

\begin{proof}[Proof of Lemma~\ref{lem:perverse-lattice}]
Choose \(Q\in\Dbc(A,\cO)\) with \(Q\otimes_{\cO}^{L}E\simeq P_E\). Scalar extension to \(E\)
is perverse \(t\)-exact, so \(N={}^p\!H^0(Q)\) has generic fibre \(P_E\). By
Lemma~\ref{lem:torsion-lattice}(2), \(N\) has a greatest torsion subobject \(T\), and
\(P_{\cO}=N/T\) is torsion-free with \(P_{\cO}\otimes_{\cO}E\simeq P_E\). Its derived reduction
\(P_{\cO}\otimes_{\cO}^{L}\kappa\) is perverse by Lemma~\ref{lem:torsion-lattice}(1).
\end{proof}

\bibliographystyle{alpha-custom}
\bibliography{bib/references}

\end{document}